\newcommand{\mr}[1]{\mathring{#1}}
\def\p{\partial}
\newtheorem{theorem}{Theorem}[section]
\newtheorem{lemma}[theorem]{Lemma}
\newtheorem{corollary}[theorem]{Corollary}
\newtheorem{remark}[theorem]{Remark}
\newcommand {\bay}{\begin{array}}
\newcommand {\eay}{\end{array}}
\newcommand {\bdm}{\begin{displaymath}}
\newcommand {\edm}{\end{displaymath}}
\newcommand{\md}{\mathrm{d}}
  \let\c@equation\undefined
  \let\c@section\undefined
  \let\c@subsection\undefined
  \let\c@zad\undefined
  \newcounter{section}
  \newcounter{equation}[section]
  \newcounter{subsection}[section]
\newcommand{\beq}{\begin{equation}}
\newcommand{\eeq}{\end{equation}}
\newcommand{\papap}{\papa \end{proof}}
\newfont{\smoldita}{cmmib8}
\newfont{\boldita}{cmmib10}
\newfont{\bboldita}{cmmib10}
\newcommand{\nn}{\nonumber}
\newcommand{\e}{\epsilon}
\newcommand{\lio}[1]{\lim_{{#1}\to 0}}
\newcommand{\lii}[1]{\lim_{{#1}\to \infty}}
\newcommand {\sem}[1]{\mbox{$({#1}(t))_{t \geq 0}$}}
\newcommand{\cl}[2]{\int\limits_{#1}^{#2}}
\newcommand{\ti}[1]{\tilde{#1}}
\newcommand{\la}{\lambda}
\newfont{\llmt}{cmmib10 scaled\magstep2}
\newfont{\lmt}{cmmib10 scaled\magstep1}
\newfont{\mt}{cmmib10}
\newfont{\smt}{cmmib8}
\newfont{\bgr}{cmmib9 scaled\magstep1}
\newcommand{\mbb}[1]{\mathbb{#1}}
\newcommand{\mc}[1]{\mathcal{#1}}
\newcommand{\mdm}[1]{\mathrm{#1}}
\begin{document}

\begin{center}{\Large Growth--fragmentation--coagulation equations with unbounded coagulation kernels}\end{center}

\begin{center}{J. Banasiak\footnote{The research has been partially supported by the National
Science Centre of Poland Grant 2017/25/B/ST1/00051 and the National Research Foundation of South Africa Grant 82770}\footnote{The authors are grateful to Prof. Mustapha Mokhtar-Kharroubi for  fruitful discussions.} \\ \small{Department of Mathematics and Applied Mathematics, University of Pretoria}\\ \small{Institute of Mathematics,  \L\'{o}d\'{z} University of Technology}\\ \small{International Scientific Laboratory of
Applied Semigroup Research, South Ural
State University}\\ \small{e-mail: jacek.banasiak@up.ac.za}\\\& \\W. Lamb\\
 \small{Department of Mathematics and  Statistics,  University of Strathclyde} \\\small{e-mail: w.lamb@strath.ac.uk}}\end{center}

\begin{center}{
\textbf{MSC}: 45K05, 34G20, 47D05, 47H07, 47H15,  82D, 92D25\\
\textbf{Keywords}: growth, fragmentation and coagulation models, $C_0$-semigroups, semilinear problems}
\end{center}

\begin{abstract}
In this paper we prove the global in time solvability of the continuous growth--fragmentation--coagulation equation with unbounded coagulation kernels, in spaces of functions having finite moments of sufficiently high order. The main tool is the recently established result on moment regularization of the linear growth--fragmentation semigroup that allows us to consider coagulation kernels whose growth for large clusters is controlled by how good the regularization is, in a similar manner to the case when the semigroup is analytic.
\end{abstract}


\section{Introduction}\label{intro}

Coagulation and fragmentation play a fundamental role in a number of diverse phenomena arising both in  natural science and in industrial processes. Specific examples can be found in ecology, human biology, polymer and aerosol sciences,  astrophysics and the powder production industry; see \cite{BLL} for further details and references.  A  feature shared by these examples is that each involves  an identifiable  population of inanimate or animate objects that are capable of forming larger or smaller objects through, respectively,  coalescence or breakup. The earliest mathematical investigation into processes governed by coagulation or fragmentation was carried out by Smoluchowski in two papers \cite{Smoluch, Smoluch17},  published in 1916 and 1917.  Smoluchowski introduced, and investigated,  a coagulation model in the form of an infinite set of ordinary differential equations that describes the time-evolution of a system of particle clusters that, as a result of Brownian motion,  become sufficiently close to enable binary coagulation of clusters to occur. In this discrete-size model, it is assumed that the clusters  are comprised of a finite number of identical fundamental particles, and so a discrete (positive integer) variable can be used to distinguish between cluster sizes. Over the past one hundred years, the pioneering work of Smoluchowski has been extended considerably, and various models, both deterministic and stochastic, and  incorporating  both coagulation and fragmentation,  have been produced and studied.

In certain applications, such as droplet growth in clouds and fogs \cite{Schu40,Scot68}, where  it is more realistic to have a continuous particle size variable which  can take any positive real value, the standard deterministic coagulation-fragmentation (C-F) model is given by
\begin{equation}\label{contscfeqn}
\partial_t f(x,t)  = {\mathfrak F}f(x,t) +
\mathfrak{K}f(x,t)\ , \ \ (x,t)\in \mbb{R}_+^2\ ,  \ \  f(x,0)  =  \mr f(x)\ , \ \  x\in \mathbb{R}_+\ ,
\end{equation}
where $\mathbb{R}_+ : = (0,\infty)$, and
\begin{align}
{\mathfrak F}f(x,t) &= -a(x) f(x,t) + \  \int_x^{\infty}a(y)b(x,y) f(y,t)\,\md y \,, \label{wlcontsfrag}\\
\mathfrak{K}f(x,t) &=  \frac{1}{2}\,\int_0^x
k(x-y,y)f(x-y,t)f(y,t)\,\md y - f(x,t)\,\int_0^{\infty}
k(x,y)f(y,t)\,\md y \, \label{wlcontscoag}
\end{align}
model fragmentation and coagulation respectively; see \cite{vizi89}. Here, it is assumed that only a single size variable, such as particle mass,  is required to differentiate between  the reacting particles, with $f(x,t)$ denoting the density of particles of size $x \in \mbb{R}_+$ at time $t \geq 0$. The coagulation kernel $k(x,y)$ gives the rate at which particles of size $x$ coalesce with particles of size $y$, and  $a(x)$ represents the overall rate of fragmentation of an $x$-sized particle.  The coefficient $b(x,y)$, often called the fragmentation kernel  or daughter distribution function, can be interpreted as giving the number
of  size $x$ particles  produced by the fragmentation of a size $y$
particle; more precisely, it is the distribution function of the
sizes of the daughter particles. In most investigations into \eqref{contscfeqn},  $b$ is assumed to be  nonnegative and measurable, with $b(x,y)=0$ for $x >y$ and
\begin{equation}
\int_0^y xb(x,y)\,\md x = y, \ \mbox{ for each } y > 0,  \label{baleq1}
\end{equation}
 but is otherwise arbitrary. Note that equation~
 \eqref{baleq1} can be viewed as a local mass conservation property, as it expresses the fact that, when the size variable is taken to be particle mass,  the total mass of all the daughter particles produced by a fragmentation event is the same as that of the parent particle.

 In the case of deterministic models, either discrete or continuous size, two main approaches have been used extensively in their analysis, with one involving weak compactness arguments and the other utilising the well-developed theory of operator semigroups. Comprehensive treatments of each are given in \cite{BLL}, and there is also an excellent account in \cite[Chapter 36]{bobrowski2016convergence} of the semigroup approach to the discrete C-F equation.  We focus here on the application of semigroup techniques to continuous C-F models, where the strategy is to express the pointwise initial-value problem \eqref{contscfeqn} as a semilinear abstract Cauchy problem (ACP) of the form
 \begin{equation}\label{ACPFK}
  \frac{d}{dt}f(t) = Ff(t) + Kf(t),\ t \in \mathbb{R}_+; \quad f(0) = \mr f,
  \end{equation}
 posed in a physically relevant  Banach space $X$.  In \eqref{ACPFK}, $F$ and $K$ are operator realisations in $X$ of the formal expressions
 \begin{align}
(\mathcal{F}f)(x) &:=  -a(x) f(x) + \  \int_x^{\infty}a(y)b(x,y)f(y)\,\md y,\ x \in \mbb{R}_+, \label{formalF}\\
(\mathcal{K}f)(x) &:= \frac{1}{2}\,\int_0^x\!\!
k(x-y,y)f(x-y)f(y)\,\md y \!-\! f(x)\,\int_0^{\infty}\!\!
k(x,y)f(y)\,\md y,\ x \in \mbb{R}_+. \label{wlcontscoag}
\end{align}
Initially, only the linear fragmentation part of \eqref{ACPFK} is examined, and a representation $F$ is sought such that $F$ generates a  strongly continuous semigroup \sem{S_{F}} on $X$. If this is possible, then the full abstract C-F problem is recast as the fixed point equation
\begin{equation}\label{Fixedpt}
f(t) = S_{F}(t)\mr f  + \int_0^t S_{F}(t-s)Kf(s)\,\md s, \ t \in \mathbb{R}_+,
\end{equation}
to which standard results can be applied to yield the existence and uniqueness of  mild and classical solutions
$f: [0,\tau_{\max}) \to X$.  The identification $[f(t)](x) = f(x,t)$ then leads, after some further analysis, to a solution of the pointwise problem
 \eqref{contscfeqn}.

 Historically, the  semigroup approach to C-F problems originated in 1979 with the publication of  a  seminal paper by Aizenman and Bak \cite{AizBak} for the specific case where the coagulation kernel $k$  is constant, and the fragmentation rate and the fragmentation kernel  are given by $a(x) = x$ and $b(x,y) = 2/y$.  The work presented in \cite{AizBak}  was  later extended in 1997 to bounded coagulation kernels and more general fragmentation rates and kernels \cite{MLM97a, LML}.  Common to these early semigroup investigations is the use of more tractable, truncated versions of the fragmentation problem to generate a sequence of semigroups that converge, in an appropriate manner, to the semigroup for the original problem; for example, see \cite[Sections~3 \& 4]{LML}.  In contrast, the year 2000 saw the introduction, in \cite{BaT01},  of  a novel approach to the fragmentation problem  that relies on  the theory of substochastic semigroups. In recent years, this substochastic semigroup approach has been developed further and used to prove  many important properties of the fragmentation semigroup such as its analyticity and, in the discrete case, compactness, \cite{BaLa12a, BaLa12b}. These properties have made it possible to extend earlier semigroup derived results on the well-posedness of C-F equations to the case where the coagulation kernel may be unbounded; see \cite{BaLa12a, BLL13, Ban2020}. Moreover, it is shown in \cite{Ban2020} that whenever the semigroup and weak compactness approaches are both applicable to a C-F problem, they both lead to the same solutions.

With regard to the choice of an appropriate space $X$, the early semigroup (and also weak compactness) analyses of \eqref{contscfeqn} used the spaces  $X_0 := L_1(\mathbb{R}_+, \md x)$, $X_1 := L_1(\mathbb{R}_+, x\md x)$ and also $X_{0,1} := L_1(\mathbb{R}_+, (1+x) \md x)$, with respective norms
\[
\|f\|_{[0]} := \int_0^\infty |f(x)| \md x; \ \|f\|_{[1]} := \int_0^\infty |f(x)|x \md x; \ \|f\|_{[0,1]} := \int_0^\infty |f(x)| (1+x) \md x.
\]
These spaces  were chosen due to the fact that, for a nonnegative solution $f$ of \eqref{ACPFK}, $\|f(t)\|_{[0]}$ gives the total number of particles in the system, while $\|f(t)\|_{[1]}$ gives the total mass. However, in later investigations it was found that improved results could be obtained by imposing some additional control on the evolution of large particles. A convenient way of introducing such a control is to consider the C-F problem in the more general weighted $L_1$ spaces  $X_{m}:= L_1(\mathbb{R}_+, x^m \md x)$ and $X_{0,m}:=L_1(\mathbb{R}_+, (1+ x^m) \md x)$.  The norms on these spaces are defined by
\begin{equation}\label{norms}
\|f\|_{[m]} := \int_0^\infty |f(x)|x^m \md x; \ \|f\|_{[0,m]} := \int_0^\infty |f(x)|w_m(x) \md x, \
\end{equation}
 where  $w_m(x) := 1+x^m$. We shall also use the notation
\begin{equation}\label{Moments}
M_m(t):= \int_0^\infty f(x,t)x^m\,\md x; \  \ M_{0,m}(t) := \int_0^\infty f(x,t) w_m(x)\md x,
\end{equation}
when discussing the norms of  nonnegative solutions to \eqref{contscfeqn}.  Clearly, $M_m(t)$ and $M_{0.m}(t)$ are finite provided $f(\cdot, t) \in X_m$ and $f(\cdot,t) \in X_{0,m}$.

For ease of exposition, we have restricted our attention in the above discussion to situations involving only the opposing  processes of  fragmentation and coagulation, and in which the total mass in the system of particles should be a conserved quantity. In many cases, however, these two processes may be complemented by other events which can change the total mass in the system.  For example, mass loss can arise due to oxidation, melting, sublimation and dissolution of matter on the exposed particle surfaces. The reverse process of mass gain can also occur due to the precipitation of matter from the environment.  Continuous coagulation and fragmentation processes, combined with a mass transport term that leads to either mass loss or mass gain, have also been studied using functional analytic and, in particular, semigroup methods; for example, see \cite{BaAr, BaLa09, Bana12a} and \cite[Section 5.2]{BLL}, or \cite{DoGa10, Ber2019, PerTr} where, however, the focus is on the long-term behaviour of the linear growth-fragmentation processes. The discrete version of the models have been comprehensively analysed in \cite{Banasiak2018, Banasiak2019}. In the case when the growth rate of a particle of mass $x$ is $r(x)$, the appropriate modified version of \eqref{contscfeqn} is
\begin{align}
\partial_t f(x,t)  &= - \p_x[r(x)f(x,t)] + {\mathfrak F}f(x,t) +
\mathfrak{K}f(x,t)\ , \ \ (x,t)\in \mbb{R}_+^2\ ,\nn\\  f(0,x) & =  \mr f(x)\ , \ \  x\in \mathbb{R}_+\ .\label{initprof}
\end{align}
The main goal of the paper is to prove global classical solvability of \eqref{initprof} in the spaces $X_{0,m}$ for sufficiently large $m,$ when the coagulation rate $k$ is unbounded (though controlled by the fragmentation rate). In this way we extend  the results of \cite{Bana12a}, where only bounded coagulation operators were considered. We use the standard semigroup theory based approach of re-writing  \eqref{initprof} as an abstract Volterra equation with the kernel given by the linear growth--fragmentation semigroup. The main tool is the moment improving property of this semigroup, proven in \cite{Ber2019}, that makes it a little like an analytic semigroup and allows for an approach similar to that used in \cite{BaLa12a, Ban2020} for pure fragmentation--coagulation problems, where the fragmentation semigroup is indeed analytic. In other words, the growth--fragmentation semigroup retains the moment regularization property of the fragmentation semigroup but,  since it is not regularizing with respect to the differentiation operator, it fails to be  analytic. Thus, while the well-posedness proof for \eqref{initprof} follows standard steps, particular estimates must be tailor made for this specific case to yield the desired result. More precisely, while the existence of the mild solution is obtained by a typical fixed point argument, the involved integral operator is weakly singular, in contrast to the standard theory where it is assumed to be continuous, see e.g. \cite[Theorem 6.1.2]{Pa}.  Similarly, the proof that the mild solution is a classical solution cannot be obtained, as in other cases where unbounded nonlinearities occur, by using the differentiability of the semigroup, since the growth--fragmentation semigroup is not analytic. Instead, the approach we adopt is to follow  \cite[Theorem 6.1.5]{Pa}, where  a regularity result is established for the case of a continuous nonlinearity,  but  again we have to show that the result can be extended to an appropriately restricted singular nonlinearity.

The paper is organized as follows. Section 2 deals with the linear growth--fragmentation equation. In particular, we use the Miyadera perturbation theorem to show that the growth--fragmentation operator is the generator of a positive semigroup on $X_{0,m}$ and provide a precise characterization of its domain, without imposing any restriction on the behaviour of the growth rate $r$ at $x=0.$ In this way we improve the corresponding results of  \cite{Bana12a, Ber2019}. The improved generation theorem is further used to slightly simplify the proof of the moment regularization property, given in \cite{Ber2019}. Section 3 is devoted to the full equation \eqref{initprof}. The existence of local mild and classical solutions is proved under quite general conditions, while the global solvability, done along the lines of \cite{Ban2020}, requires some additional assumptions to control the growth term.


\section{Fragmentation with growth}

Adopting the semigroup based strategy described in Section 1, we begin our analysis of equation \eqref{initprof} by considering the linear equation that is obtained on ignoring the coagulation terms. For technical reasons, which will become clear later, it is convenient to introduce an additional absorption term, $-a_1f$. This results in the linear equation
\begin{align}
\begin{split}
 \p_t f(x,t) &=
 -\p_x[r(x)f(x,t)] -q(x)f(x,t) + \cl{x}{\infty}a(y)b(x,y)f(y,t)\,\mathrm{d}y,\ \ (x,t) \in \mbb{R}_+^2,\\
 f(x,0)&=\mr f(x),\ \ x \in \mathbb{R}_+,
 \label{reml}
 \end{split}
  \end{align}
  where $q(x) = a(x)+a_1(x)$.
The aim is to express \eqref{reml} as an ACP of the form
\begin{equation}\label{ACPFG}
\frac{d}{dt}f(t)= T_{0,m}f(t) + B_{0,m}f(t), \ t > 0;\ \  f(0) = \mr f,
\end{equation}
where $T_{0,m}$ and $B_{0,m}$, respectively, are  operator realisations in $X_{0,m}$ of the formal expressions
\begin{equation}\label{formalTB}
(\mathcal{T}f)(x) := -\p_x[r(x)f(x)] -q(x)f(x); \ \  (\mathcal{B}f)(x) := \cl{x}{\infty}a(y)b(x,y)f(y)\,\mathrm{d}y.
\end{equation}
The ACP \eqref{ACPFG} will be   well posed in $X_{0,m}$ provided the operator $G_{0,m}:= T_{0,m}+B_{0,m}$  is the infinitesimal generator of a strongly continuous semigroup, \sem{S_{G_{0,m}}}, on $X_{0,m}$. To show that it is possible, under suitable conditions, to define such an operator $G_{0,m}$,  we first use the  Hille-Yosida theorem to establish that $T_{0,m}$, when defined appropriately,  generates a strongly continuous semigroup,  \sem{S_{T_{0,m}}} (the absorption semigroup), on $X_{0,m}$. The operator $B_{0,m}$  is then shown to be a Miyadera perturbation of $T_{0,m}$ and this leads immediately to the existence of \sem{S_{G_{0,m}}}.

\subsection{The absorption semigroup}

The transport part of the problem is given by
\begin{align}
\begin{split}
 \p_t f(x,t) &=
 -\p_x[r(x)f(x,t)] -q(x)f(x,t), \ \ (x,t) \in \mbb{R}_+,\\
 f(x,0)&=\mr f(x),\ \  x \in \mbb{R}_+,
 \label{remla}
 \end{split}
  \end{align}
where, as stated above,  $q = a + a_1$.  We assume throughout that the fragmentation and growth rates,  $a$  and $r$ respectively, satisfy
\begin{eqnarray}
&& 0 \leq a \in L_{\infty,loc}([0,\infty)); \label{aloc}\\
&& 1/r \in L_{1,loc}(\mathbb{R}_+)  \mbox{ and } 0<r(x)\leq r_0+r_1 x \leq \ti r(1+x)\quad {\rm on}\quad \mbb R_+, \label{fmlras}
\end{eqnarray}
for some nonnegative constants $r_0,r_1$ and $\ti r =\max\{r_0,r_1\}$. With regard to the additional absorption term, $a_1$,  it is assumed that
\begin{equation}\label{a1con}
0 \leq a_1 \in L_{\infty,loc}([0,\infty)) \mbox{ and } a_1(x)/a(x) \mbox{ remains bounded as } x \to \infty.
\end{equation}
On defining operators $A_{0,m}$ and $A^{(1)}_{0,m}$ on their maximal domains in $X_{0,m}$ by
\begin{eqnarray}
&& A_{0,m}f : = -af; \  \ D(A_{0,m}):= \{f \in X_{0,m} : af \in  X_{0,m}\}, \label{defnA}\\
&& A^{(1)}_{0,m}f : = -a_1f; \ \ D(A^{(1)}_{0,m}):= \{f \in X_{0,m} : a_1f \in  X_{0,m}\}, \label{defnA1}
\end{eqnarray}
the second assumption in \eqref{a1con} guarantees that $D(A_{0,m}) \subseteq D(A^{(1)}_{0,m})$.

In the following treatment of \eqref{remla} we have to distinguish between two distinct cases that may arise due the behaviour of $r(x)$ close to $x = 0$. If we use the symbol $\cl{0^+}{}$ to denote an integral in some right neighbourhood of $0$, then we may have either
\begin{equation}
\cl{0^+}{}\frac{\md x}{r(x)}=+\infty, \label{assr2}
\end{equation}
or
\begin{equation}
\cl{0^+}{}\frac{\md x}{r(x)}<+\infty. \label{assr1}
\end{equation}
When \eqref{assr2} is satisfied,  the characteristics associated with the transport equation do not reach $x=0$ and therefore the problem does not require a boundary condition to be specified. This case has been thoroughly researched in \cite{Bana12a, BLL}, and, as in \emph{op. cit.}, we define $T_{0,m}$ by
\begin{equation}\label{dtkmax}
T_{0,m}f : = \mathcal{T}f; \ \ D(T_{0,m})  := \left\{f \in X_{0,m}\; :\; rf \in AC(\mbb R_+)\;{\rm
and}\;\frac{d}{dx}(rf), qf \in X_{0,m}\right\},
\end{equation}
where $AC(\mbb R_+)$ denotes the class of functions that are absolutely continuous on all compact subintervals of $\mbb{R}_+$.  On the other hand, when \eqref{assr1} holds, the characteristics do reach $x = 0$ and therefore a boundary condition is required. Here, following \cite{Ber2019}, we impose the homogeneous condition
\begin{equation}
\lim\limits_{x\to 0^+} r(x)f(x,t) = 0
\label{bc1}
\end{equation}
 but note that more general cases can also be considered, \cite{BaLa09}. It follows that  $D(T_{0,m})$ is then given by
 \begin{align}
 D(T_{0,m})&:= \left\{f \in X_{0,m}\; :\; rf \in AC(\mbb R_+),\, \frac{d}{dx}(rf), qf \in X_{0,m}\; \right.\nn\\&\left.\mbox{ and } r(x)f(x) \to 0 \mbox{ as } x \to 0^+ \right\}.\label{dtbcmax}
 \end{align}
  To make the Hille-Yosida theorem applicable, we must determine the resolvent operator, $R(\lambda, T_{0,m})$. Following \cite[Section 5.2]{BLL}, we begin by solving
 \begin{equation}\label{resolveq}
 \lambda f(x) + \frac{d}{dx}(r(x)f(x)) + q(x)f(x) = g(x),\ \ x \in \mathbb{R}_+,
 \end{equation}
 where $g \in X_{0,m}$.  On introducing antiderivatives, $R$ and $Q$, of $1/r$ and $q/r$ respectively, defined on $\mathbb{R}_+$ by
 \begin{equation}
R(x) := \cl{1}{x}\frac{1}{r(s)}\md s, \qquad Q(x):=
\cl{1}{x}\frac{q(s)}{r(s)}\md s,
\label{RQ}
\end{equation}
we can proceed formally to obtain the general solution of \eqref{resolveq} in the form
\begin{equation}
f(x) = v_\la(x)\, \cl{0}{x}e^{\la
R(y)+Q(y)}g(y) \mdm{d}y + C\,v_\la(x),
\label{Ph10}
\end{equation}
where $C$ is an arbitrary constant and
\begin{equation}\label{vlambda}
v_\la(x) = \frac{e^{-\la R(x)-Q(x)}}{r(x)},\ \ x \in \mathbb{R}_+.
\end{equation}
An immediate consequence of \eqref{fmlras} and (\ref{RQ}) is that
$R$ is strictly increasing (and hence invertible) on $\mathbb{R}_+$,
and $Q$ is nondecreasing on $\mathbb{R}_+$. Consequently, if we define
\begin{equation}\begin{split}
\lio{x} R(x) =: m_R,&\quad \lii{x} R(x) =: M_R,\\
\lio{x} Q(x) =: m_Q, &\quad\lii{x} Q(x) =: M_Q,\label{mr}\end{split}
\end{equation}
then $m_R$ is finite and negative if \eqref{assr1} holds and $m_R=-\infty$ otherwise. Furthermore,  $M_R=\infty$ due to \eqref{fmlras}, whereas  $m_Q$ and $M_Q$ can be finite or infinite depending on the interplay between  $q$ and $r$.  In what follows we need the following result that is a slight modification of \cite[Lemma 2.1]{BaLa09} and \cite[Corollary 5.2.9]{BLL}.

\begin{lemma}\label{lemomrm}
Let $m\geq 1$ be fixed and define  $\omega_{r,m}:=2m\ti r$, where $\ti r$ is the positive constant in \eqref{fmlras}. Then, for any $\la> \omega_{r,m}$  and  $0<\alpha<\beta \leq \infty$,
\begin{align}
&I_{0,m}(\alpha,\beta):=\cl{\alpha}{\beta} \frac{e^{-\la R(s)}}{r(s)}
w_m(s)\md s \,\leq \, \frac{e^{-\la
R(\alpha)}}{\la-\omega_{r,m}}\,w_m(\alpha)\,, \label{ixest0m}\\
&J_{0,m}(\alpha,\beta) := \cl{\alpha}{\beta}\frac{(\la+q(s))e^{-\la R(s)-Q(s)}}{r(s)}w_m(s)\md s \,\leq \, \frac{\la e^{-\la
R(\alpha)-Q(\alpha)}}{\la-\omega_{r,m}}\,w_m(\alpha),\label{Jest0m}
\end{align}
where, as in \eqref{norms}, $w_m(x) = 1+ x^m.$
\end{lemma}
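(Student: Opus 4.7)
The plan is to prove both estimates by integration by parts, exploiting the identities
\[
\frac{d}{ds}\!\left(-\frac{1}{\la}e^{-\la R(s)}\right)=\frac{e^{-\la R(s)}}{r(s)},
\qquad
\frac{d}{ds}\!\left(-e^{-\la R(s)-Q(s)}\right)=\frac{\la+q(s)}{r(s)}e^{-\la R(s)-Q(s)},
\]
which follow from $R'=1/r$ and $Q'=q/r$.

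For $I_{0,m}(\alpha,\beta)$, integrating by parts against $w_m$ I obtain
\[
I_{0,m}(\alpha,\beta)=\frac{1}{\la}e^{-\la R(\alpha)}w_m(\alpha)-\frac{1}{\la}\bigl[e^{-\la R(s)}w_m(s)\bigr]_{s=\beta}+\frac{m}{\la}\cl{\alpha}{\beta}e^{-\la R(s)}s^{m-1}\md s.
\]
The boundary term at $\beta$ is harmless: if $\beta<\infty$ it is nonpositive, and if $\beta=\infty$ the growth bound $r(x)\leq\ti r(1+x)$ gives $R(s)\geq \ti r^{-1}\log(1+s)+\text{const}$, so $e^{-\la R(s)}w_m(s)\to 0$ for $\la>2m\ti r=\omega_{r,m}$. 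For the remaining integral I insert the factor $r(s)/r(s)$ and use $r(s)\leq\ti r(1+s)$ to get
\[
\cl{\alpha}{\beta}e^{-\la R(s)}ms^{m-1}\md s\leq\ti r\cl{\alpha}{\beta}\frac{e^{-\la R(s)}}{r(s)}m s^{m-1}(1+s)\md s.
\]
A quick case split shows $ms^{m-1}(1+s)\leq 2m\,w_m(s)$ on $\mbb R_+$ (for $s\geq 1$, $s^{m-1}\leq s^m$; for $s\leq 1$, $s^{m-1}+s^m\leq 2$), so this is at most $\omega_{r,m}I_{0,m}(\alpha,\beta)$. Plugging back gives
\[
I_{0,m}(\alpha,\beta)\leq \frac{1}{\la}e^{-\la R(\alpha)}w_m(\alpha)+\frac{\omega_{r,m}}{\la}I_{0,m}(\alpha,\beta),
\]
and solving for $I_{0,m}$ yields \eqref{ixest0m}.

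For $J_{0,m}$ I use the second antiderivative: integration by parts gives
\[
J_{0,m}(\alpha,\beta)\leq e^{-\la R(\alpha)-Q(\alpha)}w_m(\alpha)+\cl{\alpha}{\beta}e^{-\la R(s)-Q(s)}ms^{m-1}\md s,
\]
where the boundary term at $\beta$ is again nonpositive or vanishes at infinity by the same decay argument together with $Q\geq 0$. Since $Q$ is nondecreasing, $e^{-Q(s)}\leq e^{-Q(\alpha)}$ on $(\alpha,\beta)$, so the remaining integral is dominated by $e^{-Q(\alpha)}$ times the integral already estimated in the treatment of $I_{0,m}$, yielding at most $e^{-Q(\alpha)}\omega_{r,m}\,e^{-\la R(\alpha)}w_m(\alpha)/(\la-\omega_{r,m})$. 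Combining the two terms gives the factor $1+\omega_{r,m}/(\la-\omega_{r,m})=\la/(\la-\omega_{r,m})$, which is precisely \eqref{Jest0m}.

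The main obstacle is bookkeeping rather than conceptual: getting the elementary inequality $ms^{m-1}(1+s)\leq 2m w_m(s)$ to produce exactly the constant $\omega_{r,m}=2m\ti r$, and verifying the vanishing of the boundary term at $\beta=\infty$ from the mild assumption $r(x)\leq\ti r(1+x)$ together with the threshold $\la>\omega_{r,m}$. Once these are in place, the self-bounding inequality for $I_{0,m}$ closes up cleanly and $J_{0,m}$ reduces to it.
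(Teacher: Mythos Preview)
Your proof is correct and follows essentially the same route as the paper: integration by parts using the same antiderivatives, the elementary bound $(1+s)s^{m-1}\le 2w_m(s)$ to close the self-bounding inequality for $I_{0,m}$, and then the monotonicity of $Q$ to reduce $J_{0,m}$ to $I_{0,m}$. If anything, you are slightly more careful than the paper in explicitly disposing of the boundary term at $\beta=\infty$ via the lower bound on $R$ coming from $r(x)\le \ti r(1+x)$.
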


\proof  Since
\[
I_{0,m}(\alpha,\beta) = -\frac{1}{\la}\cl{\alpha}{\beta}w_m(s)\frac{d}{ds} e^{-\la R(s)}\md s,
\]
we can integrate by parts, and then use \eqref{fmlras}, to obtain

\begin{eqnarray}
&&I_{0,m}(\alpha,\beta)=  \frac{1}{\la}e^{-\la R(\alpha)}w_m(\alpha) - \frac{1}{\la}e^{-\la R(\beta)}w_m(\beta) +\frac{m}{\la}\cl{\alpha}{\beta}e^{-\la R(s)}s^{m-1}\md s\nn\\
&&\phantom{xx}\leq \frac{1}{\la}e^{-\la R(\alpha)}w_m(\alpha) +\frac{m\ti r}{\la}\cl{\alpha}{\beta}\frac{e^{-\la R(s)}}{r(s)}(1+s)s^{m-1}\md s.\nn
\end{eqnarray}
The inequality $(1+s)s^{m-1} \leq 2(1+s^m)$, which holds for all $s > 0$ and each fixed $m \geq 1$, yields
\begin{equation}
 I_{0,m}(\alpha,\beta) \leq \frac{1}{\la}e^{-\la
R(\alpha)}w_m(\alpha)+\frac{2m\ti r}{\la}I_{0,m}(\alpha,\beta),\label{Ik}
\end{equation}
and  \eqref{ixest0m} follows.

To prove \eqref{Jest0m},   we note first that
\begin{equation}
 \frac{\la + q(x)}{r(x)}e^{-\la R(x) -Q(x)} =  - \frac{d}{dx}e^{-\la R(x) -Q(x)}. \label{difeeab}
\end{equation}
Hence, on integrating by parts and using \eqref{fmlras}, together with the monotonicity of $e^{-Q}$, we obtain similarly to \eqref{Ik},
\begin{eqnarray}
 J_{0,m}(\alpha,\beta)
&\leq& e^{-\la R(\alpha)-Q(\alpha)}w_m(\alpha)+ m\cl{\alpha}{\beta}e^{-\la R(s)-Q(s)}s^{m-1}\md s\nn \\
&\leq& e^{-\la R(\alpha)-Q(\alpha)}w_m(\alpha)+ 2m\ti r e^{-Q(\alpha)}\cl{\alpha}{\beta}\frac{e^{-\la R(s)}}{r(s)}w_m(s)\md s\nn \\
&=& e^{-\la
R(\alpha)-Q(\alpha)}w_m(\alpha) +e^{-Q(\alpha)}\omega_{r,m}I_{0,m}(\alpha,\beta).
\end{eqnarray}
The stated inequality, \eqref{Jest0m}, now follows from \eqref{ixest0m}.
 \qed

\begin{lemma} \label{tgin} Let $\la>0$ and let $v_\lambda$ be defined by \eqref{vlambda}. \newline
(a) \   If \eqref{assr1} holds, then $v_\la$ does not satisfy \eqref{bc1}. \newline
(b) \  If \eqref{assr2} holds, then $v_\la\notin X_{0,m}$ for any $m \geq 1$.
\label{tgin}
\end{lemma}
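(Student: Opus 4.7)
The plan is to substitute the explicit formula \eqref{vlambda} into the two conditions and use the monotonicity of $R$ and $Q$ together with the limits $m_R$, $m_Q$ introduced in \eqref{mr}.

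For part (a), observe that $r(x)v_\la(x)=e^{-\la R(x)-Q(x)}$, so the question of whether \eqref{bc1} holds reduces to evaluating $\lim_{x\to 0^+}e^{-\la R(x)-Q(x)}$. Under \eqref{assr1} we have $m_R\in(-\infty,0)$ finite. Since $q\geq 0$ forces $Q$ to be nondecreasing on $\mbb{R}_+$ with $Q(1)=0$, we have $Q(x)\leq 0$ for $x\in(0,1]$, hence $m_Q\in[-\infty,0]$ and $-m_Q\geq 0$. Therefore
\[
\lim_{x\to 0^+} r(x)v_\la(x)=e^{-\la m_R-m_Q}\geq e^{-\la m_R}>1,
\]
which is never $0$. (When $m_Q=-\infty$ the limit is simply $+\infty$.)

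For part (b), I would estimate the $X_{0,m}$-norm from below on the interval $(0,1)$. Using $w_m(x)\geq 1$ and $e^{-Q(x)}\geq 1$ for $x\in(0,1]$, we obtain
\[
\|v_\la\|_{[0,m]}\,\geq\,\cl{0}{1}\frac{e^{-\la R(x)-Q(x)}}{r(x)}w_m(x)\md x\,\geq\,\cl{0}{1}\frac{e^{-\la R(x)}}{r(x)}\md x.
\]
Now I would perform the substitution $u=-R(x)$, so that $du=-dx/r(x)$; under \eqref{assr2}, $m_R=-\infty$ means the new variable ranges over $(0,+\infty)$ as $x$ ranges over $(0,1)$. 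The integral becomes $\int_0^\infty e^{\la u}\md u=+\infty$, which proves $v_\la\notin X_{0,m}$.

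The argument is essentially a direct computation, so there is no genuine obstacle; the only point requiring a moment's care is the sign/monotonicity bookkeeping that makes $-Q(x)\geq 0$ on $(0,1]$ and guarantees $m_R=-\infty$ precisely under \eqref{assr2}. Once those are fixed, both assertions follow immediately from the explicit formula for $v_\la$.
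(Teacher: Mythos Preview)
Your proof is correct and follows essentially the same route as the paper. For (a) you compute $r(x)v_\la(x)=e^{-\la R(x)-Q(x)}$ and observe the limit is at least $e^{-\la m_R}>1$, exactly as the paper does (it just writes the exponent as $\int_x^1\frac{\la+q(s)}{r(s)}\md s$); for (b) you bound $\|v_\la\|_{[0,m]}$ below by $\int_0^1 e^{-\la R(x)}/r(x)\,\md x$ and show this diverges via the substitution $u=-R(x)$, while the paper reaches the same conclusion by recognising the integrand as $-\la^{-1}\frac{d}{dx}e^{-\la R(x)}$ and applying the fundamental theorem of calculus---the two computations are equivalent.
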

\proof
(a) For $0<x<1$ we have
\[
r(x)v_\lambda(x) = e^{\cl{x}{1}\frac{\la +q(s)}{r(s)}\md s},
\]
and so $r(x)v_\lambda(x)$ does not converge to $0$ as $x \to 0^+.$

(b) Let \eqref{assr2} be  satisfied. Then, for each $\la > 0$,
\begin{equation}\label{infinitelimit}
\lim_{x \to 0^+} e^{-\la R(x)} = \lim_{x \to 0^+} e^{\cl{x}{1}\frac{\la}{r(s)}\md s} = \infty.
\end{equation}
Consequently, since $e^{-Q(x)}\geq 1$ for $x \in [0,1]$, and $R(1) =0$, we obtain
\[
\cl{0}{\infty} v_\la(x)w_m(x)\md x \geq \cl{0}{1} \frac{e^{-\la R(x)}}{r(x)} \md x = -\la \cl{0}{1} \frac{d}{dx} e^{-\la R(x)} \md x = \la (\lim\limits_{x\to 0^+} e^{-\la R(x)} - 1),
\]
and, from \eqref{infinitelimit}, it follows that $v_\lambda \notin X_{0,m}$.
 \qed

Motivated by \eqref{Ph10} and Lemma \ref{tgin},  we are led, as in \cite[Section 5.2.2]{BLL}, to
\begin{equation}
 [\mc R(\la)g](x) := \frac{e^{-\la R(x)-Q(x)}}{r(x)} \cl{0}{x}e^{\la
R(y)+Q(y)}g(y) \mdm{d}y \label{defres0}
\end{equation}
as a natural candidate for the resolvent, $R(\la,T_{0,m})$,  of $T_{0,m}$.

\begin{theorem}\label{th.5.2.11}
Let \eqref{aloc}, \eqref{fmlras} and \eqref{a1con} be satisfied.  Then, for each $m\geq 1$ and $\la > \omega_{r,m}$, the resolvent of $(T_{0,m},D(T_{0,m}))$ (in both cases \eqref{dtkmax} and \eqref{dtbcmax}) is given by $R(\la,T_{0,m}))g = \mc R(\la)g, \ g \in X_{0,m}$.  Moreover,
\begin{equation}
\| R(\la,T_{0,m})g\|_{[0,m]} \leq \frac{1}{\la-\omega_{r,m}}\,\| g \|_{[0,m]}, \ \mbox{ for all } g \in X_{0,m},
\label{resest0m}
\end{equation}
and therefore $(T_{0,m},D(T_{0,m}))$
is the generator of a strongly continuous, positive,
quasi-contractive semigroup, \sem{S_{T_{0,m}}}, on $X_{0,m}$ with type not exceeding
$\omega_{r,m}$; that is,
$$
\|S_{T_{0,m}}(t)f \|_{[0,m]}\leq e^{\omega_{r,m}t}\|f\|_{[0,m]}\,, \ \mbox{ for all } f\in X_{0,m}.
$$
\end{theorem}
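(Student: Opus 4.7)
The plan is to verify the hypotheses of the Hille--Yosida theorem (in its positive, quasi-contractive form) for the candidate resolvent $\mc R(\la)$ defined in \eqref{defres0}, with the bound $(\la-\omega_{r,m})^{-1}$ coming directly from Lemma~\ref{lemomrm}. Concretely, for fixed $\la>\omega_{r,m}$ I would first check that $\mc R(\la)$ maps $X_{0,m}$ boundedly into itself and that it actually lands in $D(T_{0,m})$; then I would verify the two identities $(\la-T_{0,m})\mc R(\la)=I$ on $X_{0,m}$ and $\mc R(\la)(\la-T_{0,m})=I$ on $D(T_{0,m})$; and finally I would apply Hille--Yosida.

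For the norm estimate, given $g\in X_{0,m}$, I would apply Fubini to
\[
\|\mc R(\la)g\|_{[0,m]}\le\int_0^\infty |g(y)|\,e^{\la R(y)+Q(y)}\!\!\int_y^\infty\frac{e^{-\la R(x)-Q(x)}}{r(x)}w_m(x)\,\md x\,\md y,
\]
then use the monotonicity of $Q$ to extract $e^{-Q(y)}$ from the inner integral, and estimate the remaining $x$-integral by Lemma~\ref{lemomrm}\eqref{ixest0m} with $\alpha=y$, $\beta=\infty$. This yields exactly \eqref{resest0m}. The same Fubini trick with Lemma~\ref{lemomrm}\eqref{Jest0m} gives $\|q\,\mc R(\la)g\|_{[0,m]}\le\la(\la-\omega_{r,m})^{-1}\|g\|_{[0,m]}$, so $q\mc R(\la)g\in X_{0,m}$. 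A direct differentiation shows that $r\mc R(\la)g\in AC(\mbb R_+)$ with $(r\mc R(\la)g)'=g-(\la+q)\mc R(\la)g$, which is therefore in $X_{0,m}$; this simultaneously proves $(\la-T_{0,m})\mc R(\la)g=g$ for every $g\in X_{0,m}$.

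The delicate point is membership in $D(T_{0,m})$ under the boundary-condition variant \eqref{dtbcmax}. Here I would bound
\[
r(x)|\mc R(\la)g(x)|\le\int_0^x e^{\la(R(y)-R(x))}e^{Q(y)-Q(x)}|g(y)|\,\md y\le\int_0^x|g(y)|\,\md y,
\]
using monotonicity of $R$ and $Q$; since $g\in X_{0,m}\subset L_{1,loc}([0,\infty))$, the right-hand side tends to $0$ as $x\to 0^+$, so \eqref{bc1} holds. In the variant \eqref{dtkmax} (i.e.\ under \eqref{assr2}) no boundary condition is required, so this step is automatic. Injectivity of $\la-T_{0,m}$ reduces to the fact that the homogeneous ODE has only the multiples of $v_\la$ as solutions, and Lemma~\ref{tgin} rules these out in either of the two domain variants, giving surjectivity of $\mc R(\la)$ onto $D(T_{0,m})$ and hence $R(\la,T_{0,m})=\mc R(\la)$.

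For density of $D(T_{0,m})$ I would simply observe that $C_c^\infty(\mbb R_+)\subset D(T_{0,m})$ in both cases (the compact support in $(0,\infty)$ trivially yields \eqref{bc1} and keeps $rf$ smooth with compact support, so $(rf)'$ and $qf$ are in $X_{0,m}$ thanks to \eqref{aloc} and \eqref{a1con}), and $C_c^\infty(\mbb R_+)$ is dense in $X_{0,m}$. With the resolvent identity $R(\la,T_{0,m})=\mc R(\la)$, the bound \eqref{resest0m}, and the density of $D(T_{0,m})$ in hand, the Hille--Yosida theorem delivers the quasi-contractive $C_0$-semigroup \sem{S_{T_{0,m}}} with type at most $\omega_{r,m}$. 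Positivity of the semigroup follows from positivity of $\mc R(\la)$, which is manifest from \eqref{defres0}, via the exponential formula $S_{T_{0,m}}(t)f=\lim_{n\to\infty}\bigl(\tfrac{n}{t}\mc R(\tfrac{n}{t})\bigr)^n f$. The main obstacle I anticipate is the careful verification that $\mc R(\la)g$ satisfies all three defining properties of $D(T_{0,m})$ simultaneously---in particular the fact that the boundary-trace argument above uses only local integrability of $g$ and monotonicity of $R,Q$, bypassing any assumption on the behaviour of $r$ near $0$ beyond \eqref{fmlras}, which is precisely where the improvement over \cite{Bana12a,Ber2019} arises.
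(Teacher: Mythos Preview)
Your proposal is correct and follows essentially the same route as the paper: Fubini plus Lemma~\ref{lemomrm}\eqref{ixest0m} for the resolvent bound, Lemma~\ref{lemomrm}\eqref{Jest0m} for $q\mc R(\la)g\in X_{0,m}$, direct verification that $\mc R(\la)g$ solves the resolvent equation and lies in $D(T_{0,m})$, and Lemma~\ref{tgin} for injectivity. Your explicit boundary-trace estimate $r(x)|\mc R(\la)g(x)|\le\int_0^x|g(y)|\,\md y$ and the density check via $C_c^\infty(\mbb R_+)$ simply spell out points the paper treats more tersely.
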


\begin{proof} Let $g \in X_{0,m}$, where $m \geq 1$.  Then, by \eqref{ixest0m} and the monotonicity of $e^{-Q}$,
\begin{equation}\begin{split}
\|  \mc R(\la)g\|_{[0,m]} &\leq
\cl{0}{\infty}\frac{e^{-\la
R(x)-Q(x)}}{r(x)} \left(\cl{0}{x}e^{\la
R(y)+Q(y)}|g(y)| \mdm{d}y\right)w_m(x) \mdm{d}x \\
&=
\cl{0}{\infty}\left(|g(y)|e^{\la
R(y)+Q(y)}\cl{y}{\infty}\frac{e^{-\la R(x)-Q(x)}}{r(x)}w_m(x) \md x\right) \mdm{d}y\\
& \leq  \int_0^\infty e^{\lambda R(y)} I_{0,m}(y,\infty) |g(y)| \md y \leq \frac{1}{\la-\omega_{r,m}} \|g\|_{[0,m]}\,.\end{split}
\label{resest00}
\end{equation}
 Similarly, using  (\ref{Jest0m}) and
(\ref{resest00}), we obtain
\begin{equation}\label{2.29}
\begin{split}
\|q \mc R(\la)g\|_{[0,m]} &\leq
\cl{0}{\infty}\left({e^{\la
R(y)+Q(y)}}\cl{y}{\infty}\frac{w_m(x)q(x)e^{-\la R(x)-Q(x)}}{r(x)} \mdm{d}x\right)|g(y)| \mdm{d}y\\
&\leq
\cl{0}{\infty}{e^{\la
R(y)+Q(y)}}J_{0,m}(y,\infty)|g(y)| \mdm{d}y \leq
\frac{\la}{\la-\omega_{r,m}}\|g\|_{[0,m]}.\end{split}
\end{equation}
To establish that  $r \mc R(\la)g \in
AC(\mbb R_+)$, we observe that $\exp(-\la R-Q)$ is a bounded function
that is differentiable a.e. on $(0,\infty)$, and also that the function defined by the integral in \eqref{defres0} is absolutely continuous on $(0,\infty)$. Furthermore,  direct substitution shows that
$$
\la [\mc R(\la)g](x) + \frac{d}{dx}(r(x)[\mc R(\la)g](x)) + q(x)[\mc R(\la)g](x)  = g(x)$$
for almost all $x>0$ and hence, by \eqref{resest00} and \eqref{2.29}, $ \frac{d}{dx}(r\mc R(\la)g) \in X_{0,m}$. Since
\[
r(x)[\mc R(\la)g](x)  \to 0 \mbox{ as } x \to 0^+
\]
 whenever  \eqref{assr1} is satisfied, it follows that
$\mc R(\la)g \in D(T_{0,m})$.  On the other hand, thanks
to Lemma~\ref{tgin}, the operator $\la I-T_{0,m}$ is injective,
which shows that (\ref{defres0}) defines the resolvent of $T_{0,m}$, and the generation of a strongly continuous, positive, quasi-contractive semigroup generated by $T_{0,m}$  can then be deduced from the Hille-Yosida theorem together with the positivity of $R(\la,T_{0,m})$.
\end{proof}

 \subsection{The growth-fragmentation semigroup}

 We now consider the growth--fragmentation  equation \eqref{reml}. In addition to the restrictions \eqref{aloc}, \eqref{fmlras} and \eqref{a1con} imposed on $a,r$ and $a_1$ respectively, we assume that the fragmentation kernel, $b$, satisfies \eqref{baleq1} and further for each $m \geq 0$,   we define
\begin{align}
n_m(y)&=\cl{0}{y}b(x,y)x^m\md x, \label{nmy}\\
N_m(y)& = y^m-n_m(y).
\label{Nmy}
\end{align}
The local mass conservation condition in \eqref{baleq1} then leads to
\begin{equation}
n_0(y) > 1, \; N_m(y) > 0, \; m>1; \quad N_1(y) = 0; \quad N_m(y) <0, \; 0\leq m<1;
\label{Nm}
\end{equation}
see\cite[Eqns. (2.2.53) \& (2.3.16)]{BLL}.  The function $n_0$ is also assumed to satisfy
\begin{equation}
 n_0(y)  \le b_0 (1+y^l)\ , \qquad y \in \mathbb{R}_+\ , \label{PhPr005}
\end{equation}
for constants  $b_0 > 0$ and $l\ge 0$.
A crucial role in the analysis is played by the further assumption that there exists $m_0>1$ such that
\begin{equation}
\liminf\limits_{y\to \infty}\frac{N_{m_0}(y)}{y^{m_0}} >0.
\label{goodchar1}
\end{equation}
It follows, \cite[Theorem 2.2]{Ban2020}, that for any fixed $y>0$, $(1,\infty)\ni m \mapsto  \frac{N_{m}(y)}{y^{m}}$ is an increasing and concave function. Hence, if  \eqref{goodchar1} holds for some $m_0>1$, then
\begin{equation}
\liminf\limits_{y\to \infty}\frac{N_{m}(y)}{y^{m}} >0,
\label{goodchar}
\end{equation}
for all $m >1$. For a given $m>0$, \eqref{goodchar} yields the existence of $y_m>0$ and $c_m<1$ such that
\begin{equation}
n_m(y) \leq c_my^m, \quad y\geq y_m.
\label{bmom}
\end{equation}
The monotonicity and concavity of $(1,\infty)\ni m \mapsto  \frac{N_{m}(y)}{y^{m}}$ implies further that there is $y_0>0$ such that  for any $m>1$ there is $c_m' <1$   such that
\begin{equation}
n_m(y) \leq c'_my^m, \quad y\geq y_0
\label{bmom1}
\end{equation}
and, for any $m>1$,  we can take $c_l' = c_m'$ for $l\geq m$.
We note that \eqref{goodchar} is satisfied for a large class of fragmentation kernels $b,$ including the homogeneous ones used in \cite{Ber2019}; there are, however, cases when it does not hold, \cite[Example 5.1.51]{BLL}.

Henceforth, we  assume that
\begin{equation}
m > \max\{1,l\},\label{massump}
\end{equation}
and for each $m$ we define an operator realisation, $B_{0,m}$, of the formal expression $\mathcal{B}$ in \eqref{formalTB} by
\begin{align}
(B_{0,m}f)(x) &:= \cl{x}{\infty}a(y)b(x,y)f(y,t)dy,\ x \in \mathbb{R}_+; \nn\\
  D(B_{0,m})&:= \{f \in X_{0,m}: B_{0,m}f \in  X_{0,m}\}. \label{B0m}
\end{align}
\begin{theorem} Let \eqref{PhPr005}, \eqref{goodchar} and \eqref{massump}, together with the assumptions of Theorem  \ref{th.5.2.11}, be satisfied. Then
$(G_{0,m},D(T_{0,m})) = (T_{0,m}+B_{0,m}, D(T_{0,m}))$ generates a strongly continuous, positive semigroup, \sem{S_{G_{0,m}}}, on $X_{0,m}$.  \label{Miy}
\end{theorem}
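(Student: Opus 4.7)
The plan is to invoke the Miyadera perturbation theorem in its positive form. By Theorem \ref{th.5.2.11}, $T_{0,m}$ generates a positive quasi-contractive $C_0$-semigroup on the weighted $L_1$-space $X_{0,m}$, and $B_{0,m}$ is evidently positive, so it suffices to exhibit $\lambda > \omega_{r,m}$ and $\gamma \in (0,1)$ with
\begin{equation*}
\|B_{0,m} R(\lambda, T_{0,m}) g\|_{[0,m]} \leq \gamma \|g\|_{[0,m]} \qquad \text{for all } g \geq 0 \text{ in } X_{0,m}.
\end{equation*}
The prerequisite inclusion $D(T_{0,m}) \subseteq D(B_{0,m})$ follows by Fubini from the identity $\|B_{0,m} f\|_{[0,m]} = \int_0^\infty a(y)|f(y)|(n_0(y)+n_m(y))\,\md y$ together with a uniform estimate $n_0(y)+n_m(y) \leq C w_m(y)$ (obtained from \eqref{PhPr005} and the trivial bound $n_m \leq y^m n_0$ for small $y$, and from \eqref{bmom1} for large $y$), plus $af \in X_{0,m}$ since $a \leq q$.

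For the Miyadera bound, I would apply Fubini twice and use the explicit resolvent \eqref{defres0} to obtain
\begin{equation*}
\|B_{0,m} R(\lambda, T_{0,m}) g\|_{[0,m]} = \int_0^\infty g(z) e^{\lambda R(z) + Q(z)} H_\lambda(z) \, \md z,
\end{equation*}
where $H_\lambda(z) := \int_z^\infty a(y)(n_0(y)+n_m(y)) e^{-\lambda R(y)-Q(y)}/r(y)\, \md y$. The task then reduces to showing $e^{\lambda R(z)+Q(z)} H_\lambda(z) \leq \gamma w_m(z)$ uniformly in $z$, for some $\gamma<1$ and $\lambda$ sufficiently large. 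I would fix $y_* > y_0$ large enough that \eqref{PhPr005} and \eqref{bmom1} jointly yield $n_0(y)+n_m(y) \leq \theta_0 w_m(y)$ on $[y_*,\infty)$ with $c_m' < \theta_0 < 1$; this is precisely where $l < m$ is crucial, since it forces $b_0(1+y^l)/w_m(y) \to 0$ as $y \to \infty$.

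Splitting $H_\lambda(z)$ at $\max\{z,y_*\}$, the far-field piece, estimated via $a \leq q \leq \lambda+q$ and inequality \eqref{Jest0m} of Lemma \ref{lemomrm}, is at most
\begin{equation*}
\theta_0 \int_{\max\{z,y_*\}}^\infty (\lambda+q(y))\frac{e^{-\lambda R(y)-Q(y)}}{r(y)} w_m(y) \, \md y \leq \theta_0 \frac{\lambda}{\lambda-\omega_{r,m}} e^{-\lambda R(z)-Q(z)} w_m(z).
\end{equation*}
The near-field piece, nonempty only for $z < y_*$, is controlled by the local $L_\infty$ bound on $a$ from \eqref{aloc} together with boundedness of $n_0+n_m$ on $[0,y_*]$; the substitution $u = R(y)$ and monotonicity of $Q$ then produce a contribution of order $C e^{-\lambda R(z)-Q(z)}/\lambda$. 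Combining, the prefactor of $w_m(z)$ is bounded by $\theta_0 \lambda/(\lambda-\omega_{r,m}) + C/\lambda$, which is strictly less than $1$ for all sufficiently large $\lambda$.

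The chief obstacle is engineering the constant $\theta_0 < 1$: this is exactly the point at which hypotheses \eqref{massump}, \eqref{PhPr005} and \eqref{goodchar1} (via \eqref{bmom1}) all enter. The condition $l < m$ absorbs the zeroth-moment tail into $w_m$, while the strict inequality $c_m' < 1$ supplies the gap against the local mass-conservation identity $N_1 \equiv 0$. Once the Miyadera bound holds, the positive version of the perturbation theorem yields that $(G_{0,m}, D(T_{0,m}))$ generates a $C_0$-semigroup on $X_{0,m}$, and positivity transfers from $(S_{T_{0,m}}(t))_{t\geq 0}$ and $B_{0,m}$ through the Dyson--Phillips expansion.
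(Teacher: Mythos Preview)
Your proof is correct and follows essentially the same route as the paper's: both invoke the positive Desch--Miyadera criterion by showing $\|B_{0,m}R(\lambda,T_{0,m})\|<1$ for large $\lambda$, splitting at a threshold $y_*$ (the paper's $\rho$), using local boundedness of $a$ together with $n_0+n_m\le C w_m$ on the near piece, and the far-field bound $n_0+n_m\le \theta_0 w_m$ with $\theta_0<1$ combined with $a\le \lambda+q$ and \eqref{Jest0m} on the far piece. The only organisational difference is that you apply Fubini first and reduce to a pointwise bound $e^{\lambda R(z)+Q(z)}H_\lambda(z)\le \gamma\,w_m(z)$, whereas the paper splits the $y$-integral first and handles the near piece via the integrated resolvent estimate \eqref{resest0m}; your displayed far-field bound is valid because $J_{0,m}(\max\{z,y_*\},\infty)\le J_{0,m}(z,\infty)$, so \eqref{Jest0m} can indeed be applied with lower limit $z$.
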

\begin{proof}
We use a version, \cite[Lemma 5.12]{BaAr}, of a theorem due to Desch that is applicable to positive operators in $L_1$ spaces.  Thus, we must prove that  $\|B_{0,m} R(\la, T_{0,m}) \| < 1$ for some
$\la > \omega_{r,m}$.  Since $B_{0,m}R(\la, T_{0,m})$ is positive, we need only establish that
$\|B_{0,m} R(\la, T_{0,m}) f\|_{[0,m]} < \|f\|_{[0,m]}$ for all $f$ in the positive cone, $X_{0,m,+}$, and some
$\la > \omega_{r,m}$; see \cite[Proposition 2.67]{BaAr}.  Given that  $f \in X_{0,m,+}$ and $\la > \omega_{r,m}$, we have
\begin{align*}
\|B_{0,m} R(\la, T_{0,m}) f\|_{[0,m]} &= \cl{0}{\infty} \left(\cl{x}{\infty} a(y) b(x,y) [R(\la, T_{0,m}) f](y)\md y \right)w_m(x)\md x \\
&=\cl{0}{\infty}a(y) [R(\la, T_{0,m}) f](y)(n_0(y) + n_m(y))\md y,
\end{align*}
where we have used the notation introduced in \eqref{nmy}.  On setting
$a_\rho = \mathrm{ess}\!\!\!\!\sup\limits_{x \in [0,\rho]}a(x)$ for each fixed  $\rho > 0$, we obtain, by \eqref{PhPr005}, \eqref{Nm} and \eqref{resest0m},
\begin{align*}
\cl{0}{\rho}a(y) [R(\la, T_{0,m}) f](y)(n_0(y) + n_m(y))\md y&\leq a_\rho \cl{0}{\infty}[R(\la, T_{0,m}) f](y)(b_0 w_l(y) + y^m)\md y \\&\leq C_m a_\rho \cl{0}{\infty}[R(\la, T_{0,m}) f](y)w_m(y)\md y\\
&\leq \frac{C_m a_\rho}{\la-\omega_{r,m}}\|f\|_{[0,m]},
\end{align*}
where
$$C_m := \sup\limits_{0\leq y < \infty} b_0\frac{w_l(y)}{w_m(y)} + \frac{y^m}{w_m(y)} \leq 2b_0+1.$$
To obtain a suitable estimate on the integral over the infinite interval $[\rho, \infty)$, we now use  \eqref{bmom}. Since $\rho > y_m$ can be chosen sufficiently large so that
\begin{equation}
b_0\frac{w_l(y)}{w_m(y)} <\delta, \ \ \mbox{ for all } y \geq \rho,
\label{bmom2}
\end{equation}
where $c_m + \delta < 1$, we can argue as in \eqref{2.29} to obtain
\begin{align*}
&\cl{\rho}{\infty}a(y) [R(\la, T_{0,m}) f](y)(n_0(y) + n_m(y))\md y \\
&\leq (\delta + c_m)\cl{0}{\infty}a(y) [R(\la, T_{0,m}) f](y)w_m(y)\md y\\
&= (\delta + c_m) \cl{0}{\infty}\left({e^{\la
R(x)+Q(x)}}\cl{x}{\infty}\frac{w_m(y) a(y)e^{-\la R(y)-Q(y)}}{r(y)} \mdm{d}y\right)f(x) \mdm{d}x \\
&\leq (\delta + c_m) \cl{0}{\infty}\left({e^{\la
R(x)+Q(x)}}\cl{x}{\infty}\frac{w_m(y) (\la + q(y))e^{-\la R(y)-Q(y)}}{r(y)} \mdm{d}y\right)f(x) \mdm{d}x \\
&=
(\delta + c_m)\cl{0}{\infty}{e^{\la
R(x)+Q(x)}}J_{0,m}(x,\infty)f(x) \mdm{d}x \leq
\frac{\la(\delta + c_m)}{\la-\omega_{r,m}}\|f\|_{[0,m]}.
\end{align*}
Hence
\begin{align*}
\|B_{0,m} R(\la, T_{0,m}) f\|_{[0,m]} \leq \left(\frac{C_m a_\rho}{\la-\omega_{r,m}} + \frac{\la}{\la-\omega_{r,m}}(\delta + c_m)\right)\|f\|_{[0,m]}.
\end{align*}
Since  $ \frac{\la}{\la-\omega_{r,m}}(\delta + c_m) \to \delta + c_m < 1$ and $\frac{C_m a_\rho}{\la-\omega_{r,m}}\to 0$ as $\la \to \infty$,  it follows that there exists $\la_0$ such that for all $\la>\la_0$
$$
\frac{C_m a_\rho}{\la-\omega_{r,m}} + \frac{\la}{\la-\omega_{r,m}}(\delta + c_m)<1.
$$
Therefore $B_{0,m}$ is a Miyadera perturbation of $T_{0,m}$, and the stated result follows.
\end{proof}
Under the conditions of Theorem \ref{Miy}, it follows that constants $C(m)$ and $\theta(m)$ exist such that
\begin{equation}
\|S_{G_{0,m}}(t)f\|_{[0,m]} \leq C(m) e^{\theta(m)t}\|f\|_{[0,m]}, \ \mbox{ for all } f \in X_{0,m} \mbox{ and } t \geq 0.
\label{Stype}
\end{equation}
Moreover, an alternative, but equivalent, representation of  the generator $G_{0,m}$ is
\begin{equation}
G_{0,m}:= T^0_{0,m} +  A^{(1)}_{0,m} + A_{0,m} +B_{0,m}   = T^0_{0,m} + A^{(1)}_{0,m} + F_{0,m},
\label{genrep}
\end{equation}
where $ A^{(1)}_{0,m}\,,A_{0,m}$ and $ B_{0,m}$ are defined by \eqref{defnA}, \eqref{defnA1} and \eqref{B0m} respectively, and
\begin{align*}
[T^0_{0,m}f](x) &:= -\p_x[r(x)f(x)]\,; \\ D(T^0_{0,m}) &:= \left\{f \in X_{0,m}\; :\; rf \in AC(\mbb R_+)\;{\rm
and}\;\frac{d}{dx}(rf)\in X_{0,m}\right\}.
\end{align*}
As with the operator $T_{0,m}$, the homogeneous boundary condition must also  be incorporated in the above definition of $D(T^0_{0,m})$ when \eqref{assr1} holds.  In \cite[Theorem 2.2]{Ban2020}, it is shown that  the fragmentation operator, $(F_{0,m},D(A_{0,m})):= (A_{0,m}+B_{0,m},D(A_{0,m}))$, is the generator of an analytic semigroup on $X_{0,m}$.

We now establish a regularising property of the growth-fragmentation semigroup that holds under an additional assumption on the fragmentation rate function $a$.   The proof involves the adjoint semigroup,
$\left(S^*_{G_{0,m}}(t)\right)_{t \geq 0}$, defined on the dual space $X_{0,m}^*$, where the latter can be identified with the function space
\[
L_{\infty,1/w_m}:= \left\{f : f \mbox{ is measurable on } \mbb{R}_+ \mbox{ and }  \|f\|_{\infty,m}:= \mbox{ess}\!\sup_{x \in \mbb{R}_+} \frac{|f(x)|}{w_m(x)} < \infty\right\}
\]
via the duality pairing
\[
\langle f,g \rangle := \cl{0}{\infty} f(x)g(x)\md x,\ \ f \in L_{\infty,1/w_m},\, g \in X_{0,m}.
\]
Since $w_m \in  L_{\infty,1/w_m}$, we can define
\begin{equation}\label{Psi}
\Psi_m(x,t) := [S^*_{G_{0,m}}(t)w_m](x),\ (x,t) \in \mathbb{R}_+^2.
\end{equation}
The following result is an extension of \cite[Lemma 2.7]{Ber2019} to the more general setting of this paper. The proof, while using the  better characterization of the generator obtained in Theorem \ref{Miy}, essentially follows the lines of \textit{op.cit}.
\begin{theorem}
In addition to the conditions required for Theorem \ref{Miy} to hold, assume that positive constants  $a_0\,,\gamma_0$  and $x_0$ exist such that
\begin{equation}
a(x)\geq a_0x^{\gamma_0},\quad \mbox{ for all } x\geq x_0.
\label{assa1}
\end{equation}
Then, for any $n,\, p$ and $m$ satisfying  $\max\{1,l\} < n < p < m$, there are constants $C=C(m,n,p)>0$ and  $\theta=\theta(m,n)>0$ such that
\begin{equation}
 \|S_{G_{0,p}}(t)\mr f\|_{[0,m]}  \leq Ce^{\theta t} t^{\frac{n-m}{\gamma_0}}\|f\|_{[0,p]},\ \mbox{ for all } f \in X_{0,p}.
 \label{regest}
 \end{equation}\label{regthm}
\end{theorem}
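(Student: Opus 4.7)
The plan is to follow the moment-interpolation strategy of \cite[Lemma 2.7]{Ber2019}, with the sharper generation theorem from Theorem \ref{Miy} removing the need for a separate analysis of the behaviour of $r$ near $x=0$. Concretely, I will derive a Bernoulli-type differential inequality for the moment $M_m(t) := \int_0^\infty x^m f(x,t)\,\md x$ of the nonnegative solution $f(t) = S_{G_{0,p}}(t)\mr f$, and then extract the singular bound $t^{(n-m)/\gamma_0}$ via a supersolution argument. By splitting $\mr f = \mr f_+ - \mr f_-$ and using the consistency of $S_{G_{0,p}}$ and $S_{G_{0,m}}$ on $X_{0,p}\cap X_{0,m}$, it suffices to treat nonnegative $\mr f$; by the monotone truncations $\mr f_k := \mr f\,\chi_{[0,k]}\in X_{0,m,+}$ one may further assume $M_m(0)<\infty$, passing to the limit $k\to\infty$ at the end via positivity of the semigroup and monotone convergence. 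In the language of the paper, the resulting bound is equivalent to the pointwise estimate $\Psi_m(x,t)\leq C e^{\theta t} t^{(n-m)/\gamma_0}w_p(x)$.

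For such $\mr f$, multiplying \eqref{reml} by $x^m$, integrating by parts (boundary terms vanishing by \eqref{bc1} or by $rf\in X_{0,m}$), and applying Fubini to the fragmentation term yields
\[
\frac{d}{dt}M_m(t) = m\int_0^\infty x^{m-1} r f\,\md x - \int_0^\infty a_1 x^m f\,\md x - \int_0^\infty a(x) N_m(x) f\,\md x.
\]
The absorption contribution is nonpositive and is dropped. The first term is bounded, via \eqref{fmlras} and Young's inequality, by $\epsilon M_m + C_\epsilon K$, where $K(t) := C(p) e^{\theta_p t}\|\mr f\|_{[0,p]}$ is an upper bound for both $M_0(t)$ and $M_n(t)$ (since $p > n$ implies $X_{0,p}\hookrightarrow X_{0,n}$, and Theorem \ref{Miy} supplies the semigroup bound on $X_{0,p}$). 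The essential gain comes from the last term: combining \eqref{assa1} with \eqref{goodchar} gives $a(x) N_m(x)\geq c\,x^{m+\gamma_0}$ for $x$ beyond a threshold $x_\ast$, while the contribution from $[0,x_\ast]$ is controlled by $CK$. Collecting terms produces
\[
\frac{d}{dt}M_m(t) \leq C_1 M_m(t) - c\,M_{m+\gamma_0}(t) + C_2 K(t).
\]

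The crux is the H\"older interpolation
\[
M_m(t) = \int_0^\infty (x^n f)^{\alpha}(x^{m+\gamma_0} f)^{1-\alpha}\,\md x \leq M_n(t)^{\alpha} M_{m+\gamma_0}(t)^{1-\alpha},\qquad \alpha = \frac{\gamma_0}{m+\gamma_0-n},
\]
which rearranges to $M_{m+\gamma_0}(t)\geq M_m(t)^{\rho}M_n(t)^{-\sigma}\geq M_m(t)^{\rho} K(t)^{-\sigma}$ with $\rho := (m+\gamma_0-n)/(m-n) > 1$ and $\sigma := \gamma_0/(m-n)$. Since $\sigma = \rho-1$ and $1/(\rho-1) = (m-n)/\gamma_0$, substituting produces the closed inequality
\[
\frac{d}{dt}M_m(t) \leq C_1 M_m(t) - c K(t)^{-\sigma} M_m(t)^{\rho} + C_2 K(t),
\]
and a supersolution of the form $U(t) = A K(t) t^{-(m-n)/\gamma_0}$ verifies this inequality on $(0,T]$ for any fixed $T$, provided $A$ is chosen sufficiently large in terms of $c$ and $\rho$; the lower-order terms $C_1 M_m$ and $C_2 K$ are absorbed into an exponential prefactor $e^{\theta t}$. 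Together with $M_0(t)\leq K(t)$ and $\|f(t)\|_{[0,m]} = M_0(t) + M_m(t)$, this gives the claimed bound on $(0,T]$; for $t > T$ one patches with the standard semigroup bound $\|S_{G_{0,m}}(t-T)\|\leq Ce^{\theta_m(t-T)}$, applied to $f(T)\in X_{0,m}$, to obtain an estimate valid for all $t>0$ with a single pair of constants.

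The main technical obstacle is the justification of the moment identity when $M_m(0) = \infty$, which I would handle by the monotone-truncation argument sketched above; since the final bound depends on $\mr f$ only through $\|\mr f\|_{[0,p]}$, the passage $k\to\infty$ is legitimate. A secondary point is the careful absorption of $M_{m-1}$ via Young's inequality so that the negative coefficient of $M_m^{\rho}$ is preserved, together with the stability of the supersolution construction with respect to the threshold $x_\ast$ coming from \eqref{goodchar} and \eqref{assa1}.
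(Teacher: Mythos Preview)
Your argument is correct, but it follows a genuinely different route from the paper's. The paper, following \cite{Ber2019}, introduces a free cutoff parameter $R$ and derives from \eqref{subfuncta'} the \emph{linear} differential inequality
\[
\frac{d}{dt}M_{0,m}(t)\leq -d_m R^{\gamma_0} M_{0,m}(t) + D_m R^{\gamma_0+m-n} M_{0,n}(t),
\]
which it integrates explicitly; the resulting bound is then recast, via the adjoint semigroup, as a pointwise estimate on $\Psi_m(x,t)$, and the parameter $R$ is optimised as a function of $x$ and $t$ (namely $R=((m-n)\log x/(d_m t))^{1/\gamma_0}$) to produce the factor $t^{(n-m)/\gamma_0}$. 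Your approach instead closes a \emph{nonlinear} inequality by the H\"older interpolation $M_m\le M_n^{\gamma_0/(m+\gamma_0-n)}M_{m+\gamma_0}^{(m-n)/(m+\gamma_0-n)}$, converting the dissipative term $-cM_{m+\gamma_0}$ into $-cK^{-\sigma}M_m^{\rho}$ and then invoking a Bernoulli-type supersolution $U(t)=A K(T)\,t^{-(m-n)/\gamma_0}$ on $(0,T]$.

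Both arguments are valid; the paper's buys an explicit pointwise kernel bound on $\Psi_m$ and tracks constants more transparently, while yours avoids the duality step entirely and is closer in spirit to classical moment-propagation arguments (Povzner, Mischler--Wennberg). Two minor points to tighten in your write-up: first, the growth term contributes $m\tilde r(M_{m-1}+M_m)$ rather than $\epsilon M_m+C_\epsilon K$ directly, so the Young step is $x^{m-1}\le \epsilon x^m+C_\epsilon$, giving $M_{m-1}\le \epsilon M_m+C_\epsilon M_0$, after which the coefficient in front of $M_m$ is fixed (there is no need for it to be small, since it is not competing with the $M_m^\rho$ term); second, your truncations $\mr f\chi_{[0,k]}$ need not lie in $D(G_{0,m})$, so for the moment identity you should approximate instead by nonnegative $C_0^\infty(\mathbb R_+)$ data and pass to the limit, exactly as the paper does at the end of its proof.
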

\begin{proof}
First we note that $X_{0,m} \hookrightarrow X_{0,p} \hookrightarrow X_{0,n}$, where $\hookrightarrow$ denotes a continuous embedding. Moreover, for each $j=n,p,m$, the operator $G_{0,j}$ generates a positive strongly continuous semigroup \sem{S_{G_{0,j}}} on $X_{0,j}.$  Suppose, initially, that
$\mr f \in D(T_{0,m})_+ = D(G_{0,m})_+$. Then, for all $t \geq 0$,
\[
f(\cdot,t) := [S_{G_{0,m}}(t)\mr f](\cdot)= [S_{G_{0,p}}(t)\mr f](\cdot)\in D(G_{0,m}) = D(T^0_{0,m})\cap D(A_{0,m}).
\]
Consequently, we can multiply \eqref{reml} by $w_m(x) = 1+x^m$ and then integrate term by term to obtain, as in \cite[Lemma 5.2.17]{BLL},
\begin{align}
\frac{d}{dt} M_{0,m}(t) &=  \cl{0}{\infty}\left(mr(x)x^{m-1}  -(N_0(x)+N_m(x))a(x)\right)f(x,t)  \mdm{d}x \nn\\
&\phantom{xx}- \cl{0}{\infty} a_1(x) f(x,t)w_m(x)\md x \leq \cl{0}{\infty} \Phi_m(x)f(x,t)\md x\,,
\label{subfuncta'}
\end{align}
where
\begin{equation}\label{Phi}
\Phi_m(x):= mr(x)x^{m-1}  -(N_0(x)+N_m(x))a(x),\ \  x \in \mathbb{R}_+.
\end{equation}
Recalling from \eqref{bmom} that $n_m(y) \leq c_my^m$ for all $y \geq y_m$, where $0 < c_m < 1$, we choose a positive constant $R_m > \max\{1,x_0,y_m\}$ such that
\begin{equation}
 (b_0(1+x^l) -1) -(1-c_m)x^m \leq 0, \ \mbox{ for all } x \geq R_m.
 \label{Rm}
 \end{equation}
It then follows from \eqref{fmlras}, \eqref{PhPr005}, \eqref{bmom} and \eqref{assa1} that, for any fixed $R \geq R_m$ and for all $x\geq R$, we have
\begin{align*}
\Phi_m(x)&\leq m\ti r (1+x)x^{m-1} +(b_0(1+x^l) -1) -(1-c_m)x^m)a_0 R^{\gamma_0}\\
& \leq (2m\ti r - (1-c_m)a_0R^{\gamma_0})w_m(x) + (b_0w_l(x) -c_m)a_0R^{\gamma_0}.
\end{align*}
If we now impose the further restriction that $R_m$ is also chosen so that
$$
2m\ti r - (1-c_m)a_0R^{\gamma_0}\leq -d_m R^{\gamma_0}, \ \mbox{ for each } R \geq R_m,
$$
where $d_m>0$, then, for any $x$ and $R$ satisfying $x \geq R \geq R_m$, we have
\begin{equation}
\Phi_m(x) \leq -d_mR^{\gamma_0}w_m(x) + b_0 a_0R^{\gamma_0}w_n(x).
\label{Phi1}
\end{equation}
Turning to the case when $x\leq R$, we have $N_m(x) \geq 0$ for all $x$, by \eqref{Nm}, and  know also that \eqref{Rm} holds for $x \in [R_m,R]$.  Consequently, on setting  $a_{R_m} = \mathrm{ess}\!\!\!\!\sup\limits_{x \in [0,R_m]}a(x)$, we obtain, for $0 < x \leq R$,
\begin{align*}
&\Phi_m(x) \leq 2 m\ti r w_m(x)  + (b_0(1+R_m^l) -1)a_{R_m} \\
&= -d_mR^{\gamma_0}w_m(x) +(d_mR^{\gamma_0}+ 2 m\ti r )w_m(x) + (b_0(1+R_m^l) -1)a_{R_m}\\
&\leq -d_mR^{\gamma_0}w_m(x) +\left((d_mR^{\gamma_0}+ 2 m\ti r )\frac{w_m(x)}{w_n(x)} + \frac{(b_0(1+R_m^l) -1)a_{R_m}}{w_n(x)}\right)w_n(x)\\
&\leq -d_mR^{\gamma_0}w_m(x)\! +\!\left(\!(d_mR^{\gamma_0}\!+\! 2 m\ti r )(1+R^{m-n})\! +\! \frac{(b_0(1\!+\!R_m^l) -1)a_{R_m}}{w_n(x)}\!\right)w_n(x),
\end{align*}
where we have used the inequality $w_m(x)/w_n(x) \leq 1 + x^{m-n},\, x > 0$. It follows that, for any fixed $R \geq R_m$, there exist positive constants $d_m$ and $D_m$ such that
$$
\Phi_m(x) \leq -d_m R^{\gamma_0} w_m(x) + D_m R^{\gamma_0+m-n}w_{n}(x),\ \mbox{ for all } x\in \mbb R_+,
$$
and therefore, from \eqref{subfuncta'},
\begin{equation}
\frac{d}{dt}M_{0,m}(t) \leq - d_m R^{\gamma_0} M_{0,m}(t) + D_m R^{\gamma_0+m-n} M_{0,n}(t).
\label{Mom1}
\end{equation}
Since  Theorem \ref{Miy} ensures that
\[
M_{0,n}(t)\! =\! \|S_{G_{0,m}}(t)\mr f\|_{[0,n]}\! =\! \|S_{G_{0,n}}(t)\mr f\|_{[0,n]}\!\leq\! C(n) e^{\theta(n) t}\|\mr f\|_{[0,n]}\! =:\! \sigma_n(t)\|\mr f\|_{[0,n]},
\]
\eqref{Mom1} leads to
$$
\frac{d}{dt}(e^{d_m R^{\gamma_0} t}M_{0,m}(t)) \leq  D_m C(n)R^{\gamma_0+m-n}e^{(d_mR^{\gamma_0} + \theta(n))t}\|\mr f\|_{[0,n]}.
$$
Hence, for any fixed $R\geq R_m$, and with $\Psi_m$ defined by \eqref{Psi},
\begin{equation}\label{Mom3}\begin{split}
M_{0,m}(t) &= \cl{0}{\infty} [S_{G_{0,m}}(t)\mr f](x)w_m(x) \md x =  \cl{0}{\infty} \mr f(x)[S^*_{G_{0,m}}(t)w_m](x) \md x\\& = \cl{0}{\infty}\Psi_m(x,t)\mr f(x) \md x \\
&\leq e^{-d_m R^{\gamma_0} t} \| \mr f\|_{[0,m]} + \frac{D_m R^{\gamma_0}}{d_mR^{\gamma_0} + \theta(n)}R^{m-n}(\sigma_n(t)-e^{-d_m R^{\gamma_0} t})\| \mr f\|_{[0,n]}\\
&\leq e^{-d_m R^{\gamma_0} t} \| \mr f\|_{[0,m]} + D'_mR^{m-n}\sigma_n(t)\| \mr f\|_{[0,n]}\\
&= \cl{0}{\infty}(e^{-d_m R^{\gamma_0} t} w_m(x) + D'_m R^{m-n} \sigma_n(t) w_n(x))\mr f(x) \md x.
\end{split}
\end{equation}
Since all positive $C^\infty_0(\mbb R_+)$ functions are in $D(G_{0,m})_+$, this leads to
\begin{equation}
\Psi_m(x,t) \leq e^{-d_m R^{\gamma_0} t} w_m(x) + D'_m R^{m-n} \sigma_n(t) w_n(x),
\label{Psi1}
\end{equation}
for almost any $x>0$ and each $R\geq R_m$.
Next, as in the proof of \cite[Lemma 2.7]{Ber2019},  we use the fact that $t, x$ and $R$ are independent. Consequently, for  fixed $t$ and $x$, with  $x \geq e^{\frac{d_m R_m^{\gamma_0} t}{m-n}}$, we can define $R \,(=R(x,t))$ by $R:= \left(\frac{m-n}{d_m}\frac{\log x}{t}\right)^{1/\gamma_0}$. It then follows from \eqref{Psi1} that
\begin{equation}
\label{Mom5}
\begin{split}
\Psi_m(x,t) &\leq x^{n-m}w_m(x) + D'_m \left(\frac{m-n}{d_m}\right)^{1/\gamma_0}t^{\frac{n-m}{\gamma_0}}(\log x)^{\frac{n-m}{\gamma_0}} \sigma_n(t) w_n(x)\\
&\leq D_{m,n,p}\, \widehat{\sigma}_n(t)t^{\frac{n-m}{\gamma_0}} w_{p}(x),
\end{split}
\end{equation}
 where $p$ is any number bigger than $n$, the function $\widehat{\sigma}_n(t)$ is bounded as $t\to 0^+$ and exponentially bounded as $t\to \infty$, and $D_{m,n,p}$ is a constant depending on $m,n,p$. For $x < e^{\frac{d_m R_m^{\gamma_0} t}{m-n}},$ we take $R=R_m$ and use the fact that $w_m(x)$ and $w_n(x)$ are increasing functions to obtain
 \begin{equation}
 \label{Mom6}
 \begin{split}
 \Psi_m(x,t) &\leq e^{-d_m R_m^{\gamma_0} t} w_m(x) + D'_m R_m^{m-n} \sigma_n(t) w_n(x)\\&  \leq e^{-d_m R_m^{\gamma_0} t} w_m\left(e^{\frac{d_m R_m^{\gamma_0} t}{m-n}}\right) + D'_m R_m^{m-n} \sigma_n(t) w_n\left(e^{\frac{d_m R_m^{\gamma_0} t}{m-n}}\right)\\
 &\leq D_{m,n} \widetilde{\sigma}_n(t) e^{\frac{md_m R_m^{\gamma_0} t}{m-n}} \leq D_{m,n}\widetilde{\sigma}_n(t) e^{\frac{md_m R_m^{\gamma_0} t}{m-n}} w_{p}(x).
 \end{split}
 \end{equation}
 Summarising, there are constants  $C=C(m,n,p)$ and $\theta = \theta(m,n)$ such that, for almost all $x>0$ and $t>0$,
 $$
 \Psi_m(x,t) \leq Ce^{\theta t} t^{\frac{n-m}{\gamma_0}} w_{p}(x)
 $$
 and hence, using \eqref{Mom3},
 $$
 \|S_{G_{0,p}}(t)\mr f\|_{[0,m]}  \leq Ce^{\theta t} t^{\frac{n-m}{\gamma_0}}\cl{0}{\infty} \mr f(x) w_{p}(x) \md x.
 $$
 The inequality can be extended to $\mr f \in X_{0,p}$ by linearity and density.
\end{proof}
\begin{corollary}
Let the assumptions of Theorem \ref{regthm} be satisfied. Then $S_{G_{0,p}}(t):D(G_{0,p}) \to D(G_{0,m})$ for all $t>0$ .\label{correg}
\end{corollary}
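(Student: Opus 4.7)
The plan is to take $\mr f \in D(G_{0,p})$ and $t>0$, set $u(t) := S_{G_{0,p}}(t)\mr f$, and show directly that $u(t) \in D(G_{0,m})$ by verifying that the right derivative at $s=0^+$ of $s \mapsto S_{G_{0,m}}(s) u(t)$ exists in $X_{0,m}$. The classical solution property gives $u \in C^1([0,\infty); X_{0,p})$ with $\dot u(t) = S_{G_{0,p}}(t) G_{0,p}\mr f$, and Theorem \ref{regthm}, applied separately to $\mr f$ and to $G_{0,p}\mr f$, places both $u(t)$ and $\dot u(t)$ in $X_{0,m}$ for every $t>0$.

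A preliminary step is to observe that $S_{G_{0,p}}(\cdot)$ and $S_{G_{0,m}}(\cdot)$ coincide on $X_{0,m}$. Because the formal expressions \eqref{formalTB} defining the operators do not depend on the ambient space, one has $D(T_{0,m})\subset D(T_{0,p})$ with $G_{0,m}$ agreeing with $G_{0,p}$ on $D(T_{0,m})$; hence for $\mr f\in D(T_{0,m})$ the map $t\mapsto S_{G_{0,m}}(t)\mr f$ is a classical $X_{0,p}$-solution of the ACP for $G_{0,p}$, and uniqueness forces equality with $S_{G_{0,p}}(t)\mr f$. Density of $D(T_{0,m})$ in $X_{0,m}$, combined with the continuous embedding $X_{0,m}\hookrightarrow X_{0,p}$, extends the identity to all of $X_{0,m}$.

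The main step is the identification of the derivative. Given $0<\tau_0<\tau$, the decomposition
\begin{equation*}
S_{G_{0,p}}(\tau)G_{0,p}\mr f = S_{G_{0,p}}(\tau_0)\bigl[S_{G_{0,p}}(\tau-\tau_0)G_{0,p}\mr f\bigr],
\end{equation*}
together with the $X_{0,p}$-strong continuity of $S_{G_{0,p}}$ and the regularization estimate \eqref{regest} applied to the outer factor $S_{G_{0,p}}(\tau_0)$, shows that $\tau\mapsto S_{G_{0,p}}(\tau)G_{0,p}\mr f$ is continuous in $X_{0,m}$ on $(0,\infty)$. Consequently, for $t,s>0$ the Bochner integral identity
\begin{equation*}
u(t+s)-u(t) = \int_t^{t+s} S_{G_{0,p}}(\tau) G_{0,p}\mr f \, d\tau,
\end{equation*}
which holds \emph{a priori} in $X_{0,p}$, actually holds in $X_{0,m}$. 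Dividing by $s$ and letting $s\to 0^+$ yields $s^{-1}(u(t+s)-u(t)) \to S_{G_{0,p}}(t)G_{0,p}\mr f$ in $X_{0,m}$. By the preliminary coincidence, $S_{G_{0,p}}(s)u(t) = S_{G_{0,m}}(s)u(t)$, so the right derivative at $0$ of $s\mapsto S_{G_{0,m}}(s)u(t)$ exists in $X_{0,m}$, which is exactly the statement that $u(t)\in D(G_{0,m})$.

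The main obstacle I foresee is organizational rather than deep: the estimate \eqref{regest} blows up as $\tau\to 0^+$, so both the $X_{0,m}$-continuity of $\tau\mapsto S_{G_{0,p}}(\tau)G_{0,p}\mr f$ and the conversion of the Bochner integral from $X_{0,p}$ to $X_{0,m}$ must be performed on intervals bounded away from the origin. Once $t>0$ is fixed and $s$ is taken small enough that $[t,t+s]\subset(0,\infty)$, this is handled cleanly by the composition argument above, and the rest of the proof reduces to combining the semigroup strong continuity in $X_{0,p}$ with the moment-improving property \eqref{regest}.
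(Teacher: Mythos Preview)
Your argument is correct and rests on the same two ingredients as the paper's proof: the coincidence $S_{G_{0,m}}(s)|_{X_{0,m}}=S_{G_{0,p}}(s)|_{X_{0,m}}$ and the regularization estimate \eqref{regest} applied to a factor of the semigroup that is held at a fixed positive time. The difference is purely organisational. You pass through $X_{0,m}$-continuity of $\tau\mapsto S_{G_{0,p}}(\tau)G_{0,p}\mr f$ on $(0,\infty)$, then convert the fundamental theorem of calculus identity from $X_{0,p}$ to $X_{0,m}$, and finally take the mean-value limit. The paper short-circuits all of this by commuting the difference quotient past $S_{G_{0,p}}(t)$ in one line,
\[
\frac{S_{G_{0,m}}(h)-I}{h}\,S_{G_{0,p}}(t)\mr f \;=\; S_{G_{0,p}}(t)\,\frac{S_{G_{0,p}}(h)-I}{h}\,\mr f,
\]
and then applying \eqref{regest} once to $S_{G_{0,p}}(t)$ acting on $\dfrac{S_{G_{0,p}}(h)-I}{h}\mr f - G_{0,p}\mr f$, which tends to $0$ in $X_{0,p}$. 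Your factorisation $S_{G_{0,p}}(\tau)=S_{G_{0,p}}(\tau_0)S_{G_{0,p}}(\tau-\tau_0)$ is exactly this commutation idea in disguise; what you gain is an explicit proof that $t\mapsto S_{G_{0,p}}(t)G_{0,p}\mr f$ is $X_{0,m}$-continuous on $(0,\infty)$, which is not stated separately in the paper but is a useful by-product.
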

\begin{proof}
Let $m,n$ and $p$ be as in Theorem \ref{regthm}. Since $f$ and $G_{0,p}f$ belong to $ X_{0,p}$, both  $S_{G_{0,p}}(t)f$ and $S_{G_{0,p}}(t)G_{0,p}f$ are in $X_{0,m}$ for $t>0$, and therefore we can evaluate
$$\frac{S_{G_{0,m}}(h) - I}{h}S_{G_{0,p}}(t)f = \frac{S_{G_{0,p}}(h) - I}{h}S_{G_{0,p}}(t)f=S_{G_{0,p}}(t)\frac{S_{G_{0,p}}(h) - I}{h}f.$$
It then follows from Theorem \ref{regthm} that
\begin{align}
&\lim\limits_{h\to 0^+}\left\|\frac{S_{G_{0,m}}(h) - I}{h}S_{G_{0,p}}(t)f - S_{G_{0,p}}(t)G_{0,p} f\right\|_{[0,m]}\nn\\
&\phantom{xx}=\lim\limits_{h\to 0^+}\left\|S_{G_{0,p}}(t)\left(\frac{S_{G_{0,p}}(h) - I}{h}f - G_{0,p} f\right)\right\|_{[0,m]} \nn\\ &\phantom{xx} \leq \lim\limits_{h\to 0^+}Ce^{\theta t}t^{-\frac{m-n}{\gamma_0}}\left \|\frac{S_{G_{0,p}}(h) - I}{h}f - G_{0,p} f\right\|_{[0,p]}
=0,\label{247}
\end{align}
which establishes that $S_{G_{0,p}}(t)f \in D(G_{0,m})$ for all $t > 0$.
\end{proof}
\begin{corollary} Assume that   \eqref{fmlras}, \eqref{PhPr005}, \eqref{goodchar}, \eqref{massump} and \eqref{assa1} are all satisfied, and let $p>\max\{1,l\}$. Then,  for each
	$\mr f \in X_{0,m}\cap D(G_{0,p}),$   problem \eqref{reml} has a classical solution in $X_{0,m}$.
\end{corollary}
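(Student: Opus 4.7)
The plan is to show that $u(t):=S_{G_{0,p}}(t)\mr f$, which is automatically a classical solution of the ACP in $X_{0,p}$ (since $\mr f\in D(G_{0,p})$), actually serves as a classical solution in $X_{0,m}$ as well. Concretely, I would verify that $u\in C([0,\infty),X_{0,m})\cap C^1((0,\infty),X_{0,m})$ with $u(t)\in D(G_{0,m})$ and $u'(t)=G_{0,m}u(t)$ for $t>0$, and $u(0)=\mr f$.

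The first step is a consistency observation. The operators $G_{0,p}$ and $G_{0,m}$ are realisations of the same formal expression in two weighted $L_1$ spaces, and both resolvents are given by the explicit formula $\mc R(\la)$ in \eqref{defres0}, independently of the exponent. Hence on the intersection $X_{0,p}\cap X_{0,m}$ the two resolvents, their Yosida approximations, and therefore the two semigroups, all coincide. In particular $u(t)=S_{G_{0,p}}(t)\mr f=S_{G_{0,m}}(t)\mr f$ for every $t\geq 0$, so strong continuity of $S_{G_{0,m}}$ on $X_{0,m}$ immediately gives $u\in C([0,\infty),X_{0,m})$ with $u(0)=\mr f$.

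The second step is to get $u(t)\in D(G_{0,m})$ for $t>0$. When $p<m$, this is exactly the content of Corollary \ref{correg} applied to $\mr f\in D(G_{0,p})$. When $p\geq m$, the embedding $X_{0,p}\hookrightarrow X_{0,m}$ forces $D(G_{0,p})\subseteq D(G_{0,m})$ (both domains are defined by the same pointwise and integrability conditions, and integrability against $w_p$ implies integrability against $w_m$), so the inclusion holds for all $t\geq 0$ trivially. In either situation, $u$ takes values in $D(G_{0,m})$ on $(0,\infty)$.

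For the final step, differentiability in $X_{0,m}$ on $(0,\infty)$, I would fix $t_0>0$, choose $s\in(0,t_0)$, and use the semigroup identity $u(t)=S_{G_{0,m}}(t-s)u(s)$ for $t\geq s$, valid by the consistency from Step~1. Since $u(s)\in D(G_{0,m})$ by Step~2, standard $C_0$-semigroup theory gives $u\in C^1([s,\infty),X_{0,m})$ with $u'(t)=G_{0,m}u(t)$; since $t_0>0$ and $s\in(0,t_0)$ were arbitrary, $u\in C^1((0,\infty),X_{0,m})$ with $u'(t)=G_{0,m}u(t)$, so $u$ is a classical solution in $X_{0,m}$. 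The main potential obstacle is the consistency of the two semigroups on their common space in Step~1; once that is stated explicitly, all substantive work has already been done in Theorem \ref{regthm} and Corollary \ref{correg}, and the remainder is a routine invocation of standard semigroup theory.
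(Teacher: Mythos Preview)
Your approach is essentially the same as the paper's: both use the consistency $S_{G_{0,p}}(t)\mr f=S_{G_{0,m}}(t)\mr f$, invoke Corollary~\ref{correg} to place the orbit in $D(G_{0,m})$ for $t>0$, and then deduce differentiability in $X_{0,m}$ (the paper computes the difference quotient directly, while you restart the semigroup at an intermediate time $s$; these are equivalent manoeuvres). One small correction: the explicit formula $\mc R(\la)$ in \eqref{defres0} is the resolvent of $T_{0,m}$, not of $G_{0,m}=T_{0,m}+B_{0,m}$, so your justification of consistency in Step~1 is not quite right as stated; you would need to pass from consistency of the $T$-resolvents (which \eqref{defres0} does give) to consistency of the perturbed semigroups via the Dyson--Phillips/Miyadera construction, or simply note, as the paper does, that the coincidence of $S_{G_{0,p}}$ and $S_{G_{0,m}}$ on $X_{0,m}$ is used throughout without further comment.
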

\begin{proof}
	Let $f(t) = S_{G_{0,m}}(t)\mr f$. We can assume that $p< m$, as otherwise $\mr f \in D(G_{0,m})$. Then, for all $t>0$, $S_{G_{0,p}}(t)\mr f=S_{G_{0,m}}(t)\mr f$ and, by Corollary \ref{correg}, $S_{G_{0,p}}(t)\mr f\in D(G_{0,m})$ so that, as in \eqref{247},
	\begin{align*}
\lim\limits_{h\to 0^+} \!\frac{f(t+h)\! -\! f(t)}{h} \! &=\!	\lim\limits_{h\to 0^+}\!\frac{S_{G_{0,m}} (h)\! -\! I}{h} S_{G_{0,m}} (t)\mr f =\!
\lim\limits_{h\to 0}\frac{S_{G_{0,p}} (h)\! -\! I}{h} S_{G_{0,p}}(t)\mr f \\
& =  G_{0,p} S_{G_{0,p}} (t)\mr f = G_{0,m} S_{G_{0,m}} (t)\mr f
\end{align*}
in $ X_{0,m},$ where the last equality follows from  Corollary \ref{correg}.
\end{proof}

\section{Coagulation-fragmentation with growth}

The results obtained in the previous section can now be exploited to establish the well-posedness of the initial value problem (IVP)
\eqref{initprof}. The restrictions placed on $r,a$ and $b$ for Theorem \ref{regthm} to hold continue to be assumed, but now we specify that $a_1(x) := \beta(1+x^\alpha)$, where $\beta$ is a constant that will be determined later, and   $0 < \alpha < \gamma_0$, so that, due to \eqref{assa1}, $a_1(x)/a(x)$ remains bounded as $x \to \infty$. The coagulation kernel is required to satisfy
\begin{equation}
k(x,y) \leq k_0 (1+ x^\alpha)(1+y^\alpha),
\label{kass}
\end{equation}
for some positive constant  $k_0$.  It is convenient to express \eqref{initprof} in the form
\begin{eqnarray}
 \partial_t f(x,t)  &=& - \p_x[r(x)f(x,t)] - \beta(1+x^\alpha)f(x,t) + {\mathfrak F}f(x,t)\nn\\
 &&\phantom{xx} +
\mathfrak{K}_\beta f(x,t)\ , \  (x,t)\in \mbb{R}_+^2, \label{rem2} \\
 f(x,0)  &=&  \mr f(x)\ , \ \  x\in \mathbb{R}_+\ , \label{rem3}
\end{eqnarray}
where
\begin{equation}\label{modifiedcoag}
\mathfrak{K}_\beta f(x,t):= \beta(1+x^\alpha)f(x,t) + \mathfrak{K}f(x,t),
\end{equation}
and $\mathfrak{F},\,\mathfrak{K}$ are given by \eqref{wlcontsfrag} and \eqref{wlcontscoag} respectively. Denoting the generator of the growth-fragmentation semigroup in this case by $G_{0,m}^{(\beta)}$, the corresponding abstract formulation of the IVP \eqref{rem2} - \eqref{rem3} can be written as
\begin{equation}\label{ACPbeta}
\frac{d}{dt}f(t) = G_{0,m}^{(\beta)}f(t) + K_{0,m}^{(\beta)}f(t),\ t > 0; \ \ f(0) = \mr f,
\end{equation}
where the operator $K_{0,m}^{(\beta)}$  is defined on $X_{0,m}$ via
\begin{eqnarray}
(K_{0,m}^{(\beta)}f)(x) &:=& \beta(1+x^\alpha)f(x) + \frac{1}{2}\,\int_0^x
k(x-y,y)f(x-y)f(y)\,\md y \nn \\
&& \quad - f(x)\,\int_0^{\infty}
k(x,y)f(y)\,\md y,\ x \in \mbb{R}_+. \label{coagop}
\end{eqnarray}

The following inequalities will often be used . For $0\leq \delta \leq \eta$ and $x\geq 0$
\begin{equation}
(1+x^\delta)\leq 2(1+x^\eta)
\label{in1}
\end{equation}
and, for $\delta,\eta\geq 0$ and $x\geq 0$,
\begin{equation}
(1+x^\delta)(1+x^\eta)\leq 4(1+x^{\delta+\eta}).
\label{in2}
\end{equation}

\subsection{Local Existence}

We begin by proving the local (in time) existence and uniqueness  of a mild solution to \eqref{ACPbeta}.

\begin{theorem}\label{lm3.1}
Let $r, a , b$ satisfy the conditions for Theorem \ref{regthm} to hold, and define $a_1(x) := \beta(1+x^\alpha)$, where $\beta > 0$ is appropriately chosen and $ 0 < \alpha < \gamma_0$, with $\gamma_0$  the constant given in \eqref{assa1}. Further, let $k$ satisfy \eqref{kass} and let $m > \alpha+\max\{1,l\}$.    Then, for each 	$\mr f \in X_{0,m,+}$, the semilinear ACP \eqref{ACPbeta} has a unique nonnegative mild solution $f \in C([0, \tau_{\max}), X_{0,m})$ defined on its maximal interval of existence $[0,\tau_{\max})$, where $\tau_{max} = \tau_{\max}(\mr f)$. If $\tau_{\max}< \infty$, then $\|f(t)\|_{[0,m]}$ is unbounded as $t\to \tau_{max}^-$.
\end{theorem}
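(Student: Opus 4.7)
The natural strategy is to recast \eqref{ACPbeta} as the Volterra fixed-point equation
\[
f(t) = S_{G_{0,m}^{(\beta)}}(t)\mr f + \int_0^t S_{G_{0,m}^{(\beta)}}(t-s) K_{0,m}^{(\beta)} f(s) \,\md s,
\]
and to run a contraction argument on a closed ball $B_M := \{f \in C([0,T]; X_{0,m,+}) : \sup_{t\in[0,T]} \|f(t)\|_{[0,m]} \leq M\}$ for small $T>0$. The central difficulty is that $K_{0,m}^{(\beta)}$ is not a bounded operator from $X_{0,m}$ into itself, because of the factor $x^\alpha$ coming from \eqref{kass}; this is precisely why the proof departs from the standard locally-Lipschitz template of \cite[Theorem 6.1.2]{Pa}, and the moment-regularising estimate of Theorem \ref{regthm} must be invoked to make up the lost moments.

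Concretely, I would choose numbers $n,\,p$ with
\[
\max\{1,\,l,\,m-\gamma_0\} < n < p < m-\alpha,
\]
which is possible thanks to the assumptions $m > \alpha + \max\{1,l\}$ and $\alpha < \gamma_0$. A direct estimate of the three pieces in \eqref{coagop}, using \eqref{kass}, the inequalities \eqref{in1}, \eqref{in2}, and the continuous embedding $X_{0,m}\hookrightarrow X_{0,q}$ for every $q\leq m$ (in particular $q=\alpha$ and $q=p+\alpha$), then yields
\[
\|K_{0,m}^{(\beta)} f\|_{[0,p]} \leq C_\beta \|f\|_{[0,m]} + C \|f\|_{[0,m]}^2,
\]
together with an analogous local Lipschitz bound $\|K_{0,m}^{(\beta)} f - K_{0,m}^{(\beta)} g\|_{[0,p]} \leq L(M) \|f-g\|_{[0,m]}$ on $B_M$. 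Applying Theorem \ref{regthm} inside the Volterra integral gives
\[
\|S_{G_{0,m}^{(\beta)}}(t-s) K_{0,m}^{(\beta)} f(s)\|_{[0,m]} \leq C e^{\theta(t-s)} (t-s)^{(n-m)/\gamma_0} \|K_{0,m}^{(\beta)} f(s)\|_{[0,p]},
\]
and the restriction $n > m - \gamma_0$ ensures $(n-m)/\gamma_0 > -1$, so the Volterra kernel is weakly singular but integrable.

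Combining the two estimates, the right-hand side of the integral equation defines a map $\mathcal{N}$ with
\[
\|\mathcal{N}f(t) - \mathcal{N}g(t)\|_{[0,m]} \leq C' L(M) \int_0^t e^{\theta(t-s)} (t-s)^{(n-m)/\gamma_0} \|f(s)-g(s)\|_{[0,m]}\,\md s,
\]
and a renorming $\|f\|_\lambda := \sup_{0\leq t\leq T} e^{-\lambda t}\|f(t)\|_{[0,m]}$ with $\lambda$ large (or equivalently iterating $\mathcal{N}$ until the composed weakly-singular kernel is genuinely contracting) produces a unique fixed point on $B_M$ for $T = T(M,\mr f)$ small enough. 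Nonnegativity is secured by choosing $\beta$ so large (of order $k_0 M$, since $\|f\|_{[0,\alpha]} \leq 2\|f\|_{[0,m]}$) that, on $B_M \cap X_{0,m,+}$, the pointwise gain $\beta(1+x^\alpha)f(x) + \tfrac12\!\int_0^x k(x-y,y)f(x-y)f(y)\,\md y$ dominates the loss $f(x)\!\int_0^\infty k(x,y)f(y)\,\md y$; then the Picard iterates, started from the nonnegative function $S_{G_{0,m}^{(\beta)}}(\cdot)\mr f$, stay in the positive cone by positivity of the semigroup (Theorem \ref{Miy}), and their limit inherits that property. Extending to a maximal interval $[0,\tau_{\max})$ by the usual concatenation via uniqueness, and deducing the blow-up alternative from the fact that a uniform bound on $\|f(t)\|_{[0,m]}$ near $\tau_{\max}$ would let one restart the local construction and properly extend $f$, are routine. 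The genuine obstacle throughout is the weakly singular Volterra kernel, which forces the renorming/iteration trick in place of the standard continuous-kernel contraction.
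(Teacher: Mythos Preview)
Your approach is essentially the paper's: take $p=m-\alpha$ (you choose $p$ strictly smaller, which is immaterial), show $K_{0,m}^{(\beta)}$ maps $X_{0,m}$ Lipschitz-continuously into $X_{0,p}$, choose $\beta$ of order $k_0 M$ to force $K_{0,m}^{(\beta)}\geq 0$ on the ball, and close the contraction via the weakly singular estimate of Theorem~\ref{regthm}---noting that the semigroup in the Duhamel term must be $S_{G_{0,p}^{(\beta)}}$, not $S_{G_{0,m}^{(\beta)}}$, since $K_{0,m}^{(\beta)}f$ lies only in $X_{0,p}$. The paper simply shrinks $\tau$ rather than renorming, and it spends the bulk of the proof verifying that the Duhamel integral is actually continuous into $X_{0,m}$ (not just $X_{0,p}$), via dominated convergence against the integrable singularity $(t-s)^{(n-m)/\gamma_0}$; you should not skip this step, as it does not follow from standard strong-continuity arguments.
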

\begin{proof}
 Let $p$ be defined by
\begin{equation}
p:= m-\alpha,\label{p}
\end{equation}
and, noting that  $(m-n)/\gamma_0 = (p-n)/\gamma_0 + \alpha/\gamma_0$ , we are then able to choose $n < p$ such that $(m-n)/\gamma_0 < 1$ and $n>\max\{1,l\}$, so that $m,n,p$ satisfy assumptions of Theorem \ref{regthm}.  We begin by showing that the bilinear form $\mc K_{0,m}^{(\beta)}$, defined by
\begin{align}
[\mc K_{0,m}^{(\beta)}(f,g)](x) &:= \beta(1 +x^\alpha)f(x)- f(x)\!\!\cl{0}{\infty}\!\!k(x,y)g(y)\md y\nn\\
 &\phantom{xx}+
\frac{1}{2}\cl{0}{x}\!\!k(x-y,y)f(x-y)g(y)\md y,\label{mcK}
\end{align}
is a continuous mapping from $X_{0,m} \times X_{0,m}$ into $X_{0,p}$. From \eqref{kass}, \eqref{in2} and \eqref{p}, we obtain, for all $f,g \in X_{0,m}$,
\begin{equation}\label{wllinear}
\beta \cl{0}{\infty}(1+x^\alpha)|f(x)|w_p(x) \md x  \leq  4\beta \|f\|_{[0,m]},
\end{equation}
\begin{equation}\label{cest1}
\int_{0}^\infty  |f(x)|\left(\int_{0}^\infty
k(x,y) |g(y)|\md y\right)w_p(x)\md x  \leq  4k_0 \|f\|_{[0,m]}\|g\|_{[0,m]}
\end{equation}
and, in a similar way,
\begin{equation}\label{cest2}\begin{split}
&\frac{1}{2}\int_{0}^{\infty}\!\!\! \left(\int_{0}^{x}
k(x-y,y) |f(y)||g(x-y)|\md y\right)w_{p}(x)\md x\\&\phantom{xxx}= \frac{1}{2}\int_{0}^{\infty}\!\!\int_{0}^{\infty}\!\! k(x,y)|f(y)||g(x)|w_{p}(x+y)\md x\md y\\
&\phantom{xxx}\leq  2^{p-1}k_0\cl{0}{\infty}\cl{0}{\infty} |f(y)||g(x)| ((1+x^\alpha)(1+y^\alpha) (w_{p}(x)+w_{p}(y))\md x\md y \\
&\phantom{xxx}\leq 2^{p-1} k_0(4\|g\|_{[0,m]} \|f\|_{[0,\alpha]} + 4\|g\|_{[0,\alpha]}\|f\|_{[0,m]})\leq 2^{p+3} k_0\|f\|_{[0,m]}\|g\|_{[0,m]}.\end{split}
\end{equation}
Hence,
\begin{equation}\label{boundedbl}
\|\mc K_{0,m}^{(\beta)}(f,g) \|_{[0,p]} \leq  \left(4\beta +(4+2^{p+3})k_0\|g\|_{[0,m]}\right)\|f\|_{[0,m]},
\end{equation}
{ for all } $f,g \in X_{0,m}$. Since $K_{0,m}^{(\beta)}f = \mc K_{0,m}^{(\beta)}(f,f)$, it follows that $K_{0,m}^{(\beta)}$ is a continuous mapping from $X_{0,m}$ into $X_{0,p}$. Consequently, the integral equation that arises as the mild formulation of \eqref{ACPbeta} can be written as
 \begin{equation}\label{eq3.2}
	f(t) =  S_{G_{0,m}^{(\beta)}}(t) \mr f +  \int_{0}^{t} S_{G_{0,p}^{(\beta)}} (t - s) K_{0,m}^{(\beta)} f(s) \md s.
\end{equation}
Next consider the set
\begin{equation}
\mc U := \{ f \in X_{0,m,+}:\;\| f\|_{[0,m]}\leq 1+b\},
\label{ball}
\end{equation}
for some arbitrarily fixed $b>0$. For each $ f\in \mc U$,  we can use \eqref{kass}, \eqref{in2} and the fact that $\alpha < \gamma_0 \leq m$, to obtain
$$
\int_{0}^{\infty} k(x,y)f(y)\md y \leq 2k_0 (1 + x^\alpha)\|f\|_{[0,m]} \leq \beta(1+x^\alpha), \ \mbox{ for all } x > 0,
$$
where we now define  \begin{equation}
\beta := 2k_0(1 + b),
 \label{gammak}\end{equation}
and therefore, with this choice of $\beta$,
\begin{equation}\label{Cpos}
(K_{0,m}^{(\beta)} f)(x) \geq \frac{1}{2}\cl{0}{x}k(x-y,y) f(x-y)f(y)\md y \geq 0.
\end{equation}
Also, from \eqref{boundedbl}, we have
\begin{equation}
\|K_{0,m}^{(\beta)} f\|_{[0,p]} \leq   K(\mc U), \ \mbox{ for all } f \in \mc U,
\label{cest2a}
\end{equation}
where $K(\mc U) = \frac{\beta^2}{k_0}(2 +(1+2^{p+1}))$, and
\begin{align}
&\Vert K_{0,m}^{(\beta)} f - K_{0,m}^{(\beta)} g\Vert_{[0,p]} \nn\\
 &\leq  4\beta \Vert f-g\Vert_{[0,m]} + (4+ 2^{p+3})k_0 \left( \Vert f \Vert_{[0,m]}+ \Vert g \Vert_{[0,m]}\right)\Vert f - g \Vert_{[0,m]}\nn\\&\leq  L(\mathcal{U})\Vert f - g \Vert_{[0,m]},  \label{K1B}
\end{align}
 for all  $f,g \in \mc U$, where, by \eqref{gammak}, $
L(\mathcal{U}) = 8\beta(1 +  2^{p}).
$

For $\mr f \in X_{0,m,+}$ satisfying
\begin{equation}\label{wlfin}
\|\mr f\|_{0,m} \leq { b},
\end{equation}
we define the operator
\begin{equation}\label{wlSL}
\textsl{T}f(t) = S_{G_{0,m}^{(\beta)}}(t)\mr f +
\int_{0}^{t}S_{G_{0,p}^{(\beta)}}(t-s)K_{0,m}^{(\beta)} f(s)\md s
\end{equation}
 in the space $Y_m=C([0,\tau], \mc U),$ with $\mc U$ defined by (\ref{ball}) and $\tau$ to be determined so that $\textsl{T}$ is a contraction on $Y_m$, when $Y_m$ is  equipped with the metric induced by the norm from $C([0,\tau], X_{0,m})$.  First, observe that $\textsl{T}f \in C([0,\tau], X_{0,m,+})$ for all $f \in Y_m$.  Indeed, for any $t\geq 0$ and $h>0$, with $t + h \leq \tau$,
 \begin{align*}
 &\|\textsl{T}f(t+h) - \textsl{T}f(t)\|_{[0,m]}\\
  = &\left\|\int_{0}^{t+h}S_{G_{0,p}^{(\beta)}}(t+h-s)K_{0,m}^{(\beta)} f(s)\md s-\int_{0}^{t}S_{G_{0,p}^{(\beta)}}(t-s)K_{0,m}^{(\beta)} f(s)\md s\right\|_{[0,m]}\\
 \leq & \int_{t}^{t+h}\left\|S_{G_{0,p}^{(\beta)}}(t+h-s)K_{0,m}^{(\beta)} f(s)\right\|_{[0,m]}\md s \\
 &+ \int_{0}^{t}\left\|S_{G_{0,p}^{(\beta)}}(t-s)(S_{G_{0,p}^{(\beta)}}(h)-I)K_{0,m}^{(\beta)} f(s)\right\|_{[0,m]}\md s=   I_1(h)+I_2(h).
 \end{align*}
 Now, by \eqref{regest} and \eqref{cest2a},
 $$
 \|S_{G^{(\beta)}_{0,p}}(t+h-s)K_{m,\beta} f(s)\|_{[0,m]}\leq C(m,n,p)e^{\theta(m,n) (t+h-s)} (t+h-s)^{\frac{n-m}{\gamma_0}} K(\mc U).
 $$
 Since $(n-m)/\gamma_0 > -1$, it follows that
 $$
 \cl{t}{t+h}(t+h-s)^{\frac{n-m}{\gamma_0}} ds = \cl{0}{h} \sigma^{\frac{n-m}{\gamma_0}} d\sigma  \to 0 \ \mbox{ as } h \to 0^+,
 $$
and therefore $\lim_{h\to 0^+} I_1(h) = 0$.
 Similarly,
 \begin{align}
 &\|S_{G_{0,p}^{(\beta)}}(t-s)(S_{G_{0,p}^{(\beta)}}(h)-I)K_{0,m}^{(\beta)} f(s)\|_{[0,m]} \nn \\
 \leq & C(m,n,p)e^{\theta(m,n) (t-s)} (t-s)^{\frac{n-m}{\gamma_0}}\|(S_{G_{0,p}^{(\beta)}}(h)-I)K_{0,m}^{(\beta)} f(s)\|_{[0,p]}\nn\\
 &\phantom{xx}\leq  C(m,n,p)e^{\theta(m,n) (t-s)} (t-s)^{\frac{n-m}{\gamma_0}}(C(p)
  e^{\theta(p)h}+1)\|K_{m,\beta} f(s)\|_{[0,p]}\nn\\
 & \phantom{xx}\leq C(m,n,p,\tau)(t-s)^{\frac{n-m}{\gamma_0}},\label{osz1}
 \end{align}
 where  $C(m,n,p,\tau)$ is a constant that is independent of $h$. Thus, from the second line in the above calculation,  we see that the integrand in $I_2(h)$ converges to zero as $h \to 0^+$  for each $0\leq s< t$ and the last line ascertains that this convergence is dominated by an integrable function. Hence, an  application of the Lebesgue dominated convergence theorem shows that $\textsl{T}f$ is right continuous at $t$ for all $t \in [0,\tau)$.  When $0<h<t \leq \tau$ and $t-h \geq 0$ , we have
 \begin{align*}
 &\|\textsl{T}f(t-h) - \textsl{T}f(t)\|_{[0,m]} \\
  = &\left\|\int_{0}^{t-h}S_{G_{0,p}^{(\beta)}}(t-h-s)K_{0,m}^{(\beta)} f(s)\md s-\int_{0}^{t}S_{G_{0,p}^{(\beta)}}(t-s)K_{0,m}^{(\beta)} f(s)\md s\right\|_{[0,m]}\\
 \leq &\int_{t-h}^{t}\left\|S_{G_{0,p}^{(\beta)}}(t-s)K_{0,m}^{(\beta)} f(s)\right\|_{[0,m]}\md s \\
 &+ \int_{0}^{t-h}\left\|S_{G_{0,p}^{(\beta)}}(t-h-s)(I-S_{G_{0,p}^{(\beta)}}(h))K_{0,m}^{(\beta)} f(s)\right\|_{[0,m]} \md s= I'_1(h)+I'_2(h).
 \end{align*}
 Arguing as before, we obtain $\lim_{h\to 0^+}I'_1(h) =0$.  As for $I'_2(h),$ we rewrite it as
\begin{align}
I'_2(h) &= \int_{h}^{t}\left\|S_{G_{0,p}^{(\beta)}}(t-\sigma)(I-S_{G_{0,p}^{(\beta)}}(h))K_{0,m}^{(\beta)} f(\sigma - h)\right\|_{[0,m]}\md \sigma\nn\\
&= \int_{0}^{t}\chi_{[h,t]}\left\|S_{G_{0,p}^{(\beta)}}(t-\sigma)(I-S_{G_{0,p}^{(\beta)}}(h))K_{0,m}^{(\beta)} f(\sigma - h)\right\|_{[0,m]} \md \sigma,\label{I'}
 \end{align}
 where $\chi_\Omega$ is the characteristic function of $\Omega$. Since $t \to K_{0,m}^{(\beta)} f(t)$ is a continuous function in $X_{0,p},$  $\lim\limits_{h\to 0^+} K_{0,m}^{(\beta)} f(\sigma - h) = K_{0,m}^{(\beta)} f(\sigma)$ for each $\sigma > 0.$ Then, on account of the local uniform boundedness of \sem{S_{G_{0,p}^{(\beta)}}}, a corollary of the Banach-Steinhaus theorem ensures that $\lim\limits_{h\to 0^+} (I-S_{G_{0,p}^{(\beta)}}(h))K_{0,m}^{(\beta)} f(\sigma - h) = 0$ for any fixed  $\sigma>0$ and we see that the integrand in \eqref{I'} converges to zero on $[0,t].$ Moreover, from  \eqref{osz1},
 \begin{align*}
 \|S_{G_{0,p}^{(\beta)}}(t-\sigma)(I-S_{G_{0,p}^{(\beta)}})(h))K_{0,m}^{(\beta)} f(\sigma-h)\|_{[0,m]}&\leq
   C(m,n,p,\tau)(t-\sigma)^{\frac{n-m}{\gamma_0}}
 \end{align*}
  for all $\sigma \in [h,t]$,  where, by \eqref{cest2a}, $K_{0,m}^{(\beta)} f(\sigma-h)$ is estimated by the $Y_m$ norm of $f$, and this is independent of $h$. Consequently, on applying the Lebesgue dominated convergence theorem once again, we obtain $\lim_{h\to 0^+}I'_2(h) =0$. Further, thanks to \eqref{gammak},
$\textsl{T}f(t) \geq 0$ since $f(s)\geq 0$ for all $s \in [0,\tau]$.

By continuity,  for any initial condition $\mr f$ satisfying \eqref{wlfin} we can choose $\tau'_1$  such that $\|\textsl{T}f(t)\|_{[0,m]}  \leq 1+b$ for $0\leq t \leq \tau'$. We need however, a more uniform estimate. For this,  using \eqref{regest}, we get
\begin{equation}\label{inter}
\begin{split}
\|\textsl{T}f(t)\|_{[0,m]}&\leq \|S_{G_{0,m}^{(\beta)}}(t)\mr f\|_{[0,m]} + \int_{0}^{t}\|S_{G_{0,p}^{(\beta)}}(t-s)K_{0,m}^{(\beta)}f(s)\|_{[0,m]}\md s\\
&\leq \|S_{G_{0,m}^{(\beta)}}(t)\mr f\|_{[0,m]} + C(m,n,p)e^{\theta(m,n)\tau} K(\mc U)\int_0^t (t-s)^{-\frac{m-n}{\gamma_0}} \, \rm{d}s \\
&\leq \|S_{G_{0,m}^{(\beta)}}(t)\mr f\|_{[0,m]}+ C(m,n,p)e^{\theta(m,n)\tau}  K(\mc U)\frac{\gamma_0}{\gamma_0+n-m} \tau^{\frac{\gamma_0+n-m}{\gamma_0}}\\
&\leq  1+b,
\end{split}
\end{equation}
  provided that $f(s) \in \mathcal{U}$ for $0 \leq s \leq \tau_{1}(\mc U),$  where the existence of such a $\tau_{1}$ is ensured by the fact that
  \begin{equation}
 \lim\limits_{\tau\to 0^+}\|S_{G_{0,m}^{(\beta)}}(\tau)\mr f\|_{[0,m]} =\|\mr f\|_{[0,m]} \quad \text{and}\quad  \lim\limits_{\tau\to 0^+}e^{\theta(m,n)\tau}   \tau^{\frac{\gamma_0+n-m}{\gamma_0}}=0.
\label{lims}
\end{equation}
Finally, to  establish that  $\textsl{T}$ is a contraction on $Y_m$  when  $\tau$ is sufficiently small, we use (\ref{K1B}) to obtain
\begin{eqnarray*}
&&\|\textsl{T}f(t)-\textsl{T}g(t) \|_{[0,m]} \leq   \cl{0}{t}\|S_{G_{0,p}^{(\beta)}}(t-s)([K_{0,m}^{(\beta)} f](s)- [K_{0,m}^{(\beta)}] g(s))\|_{m}\md s \\
&&\phantom{xx}\leq  C(m,n,p)e^{\theta(m,n)\tau} L(\mc U)\sup_{0\leq s\leq \tau}\| f(s)- g(s)\|_{[0,m]} \cl{0}{t}(t-s)^{-\frac{m-n}{\gamma_0}} \md s\\
&&\phantom{xx}\leq  C(m,n,p)e^{\theta(m,n)\tau}  L(\mc U)\frac{\gamma_0}{\gamma_0+n-m} \tau^{\frac{\gamma_0+n-m}{\gamma_0}}\sup_{0\leq s\leq \tau}\| f(s)-g(s)\|_{[0,m]}.
\end{eqnarray*}
 We now choose $\tau_{2}$ such that
 $$
 C(m,n,p)e^{\theta(m,n)\tau}  L(\mc U)\frac{\gamma_0}{\gamma_0+n-m} \tau^{\frac{\gamma_0+n-m}{\gamma_0}}<1,
 $$
 for any $0\leq \tau\leq \tau_2$. Hence, by taking $0<\tau = \min\{\tau_1,\tau_2\}$ we see that $\textsl{T}$ is a contractive mapping on $Y_m$. We note that $\tau$ is uniform on bounded subsets of $X_{0,m}$. Hence, in the usual way, we can extend the solution to the maximal interval $[0,\tau_{\max})$. The last statement of the theorem follows from the preceding observation that the length of the interval of existence  is uniform on bounded subsets and thus if the solution is bounded in any left neighbourhood of $\tau_{\max}$, it can be extended beyond it. \end{proof}
 The next objective is to prove that the mild solution of the previous theorem is, in fact, a classical solution of  \eqref{ACPbeta} under an additional restriction on $\mr f$. We require the following three lemmas.

\begin{lemma}
$K_{0,m}^{(\beta)} :X_{0,m} \to X_{0,p}$ is continuously Fr\'{e}chet differentiable. \label{lemdif}
\end{lemma}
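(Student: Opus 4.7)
The plan is to exploit the fact that $K_{0,m}^{(\beta)}$ is essentially a quadratic polynomial in $f$ (the sum of a bounded linear term and the diagonal of a bounded bilinear form), and to read off both the derivative and its continuous dependence on $f$ directly from the continuity estimates already established for the bilinear form $\mathcal{K}_{0,m}^{(\beta)}$ in the proof of Theorem \ref{lm3.1}.

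First, I would decompose $K_{0,m}^{(\beta)}f(x)=\beta(1+x^\alpha)f(x)+\widetilde{\mathcal{K}}(f,f)(x)$, where
\begin{equation*}
\widetilde{\mathcal{K}}(f,g)(x):=\tfrac{1}{2}\int_0^x k(x-y,y)f(x-y)g(y)\,\md y - f(x)\int_0^\infty k(x,y)g(y)\,\md y.
\end{equation*}
The estimates \eqref{cest1}--\eqref{cest2} already show that $\widetilde{\mathcal{K}}:X_{0,m}\times X_{0,m}\to X_{0,p}$ is a bounded bilinear form, while \eqref{wllinear} shows that the linear piece $f\mapsto\beta(1+x^\alpha)f$ is bounded from $X_{0,m}$ into $X_{0,p}$. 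The natural candidate for the derivative at $f\in X_{0,m}$ is then
\begin{equation*}
[K_{0,m}^{(\beta)}]'(f)h(x):=\beta(1+x^\alpha)h(x)+\widetilde{\mathcal{K}}(f,h)(x)+\widetilde{\mathcal{K}}(h,f)(x),
\end{equation*}
which, by the above bilinear bound, defines a bounded linear operator from $X_{0,m}$ into $X_{0,p}$ with operator norm $\leq C(1+\|f\|_{[0,m]})$.

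Next I would verify the Fr\'echet condition. By bilinearity,
\begin{equation*}
K_{0,m}^{(\beta)}(f+h)-K_{0,m}^{(\beta)}(f)-[K_{0,m}^{(\beta)}]'(f)h=\widetilde{\mathcal{K}}(h,h),
\end{equation*}
and invoking \eqref{boundedbl} (with the linear $\beta$-term excluded, or simply by applying the bilinear estimates directly) yields $\|\widetilde{\mathcal{K}}(h,h)\|_{[0,p]}\leq C\|h\|_{[0,m]}^2$, which is $o(\|h\|_{[0,m]})$ as $h\to 0$ in $X_{0,m}$. This establishes Fr\'echet differentiability at every $f\in X_{0,m}$.

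Finally, continuity of $f\mapsto [K_{0,m}^{(\beta)}]'(f)$ from $X_{0,m}$ into $\mathcal{L}(X_{0,m},X_{0,p})$ follows at once from bilinearity: for $f_1,f_2\in X_{0,m}$ and $h\in X_{0,m}$,
\begin{equation*}
\bigl([K_{0,m}^{(\beta)}]'(f_1)-[K_{0,m}^{(\beta)}]'(f_2)\bigr)h=\widetilde{\mathcal{K}}(f_1-f_2,h)+\widetilde{\mathcal{K}}(h,f_1-f_2),
\end{equation*}
so the same bilinear bound gives $\bigl\|[K_{0,m}^{(\beta)}]'(f_1)-[K_{0,m}^{(\beta)}]'(f_2)\bigr\|_{X_{0,m}\to X_{0,p}}\leq C\|f_1-f_2\|_{[0,m]}$; in fact we obtain Lipschitz continuity, which is stronger than required. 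There is no genuine obstacle here beyond careful bookkeeping; every needed estimate is already contained in the continuity argument for the bilinear form, and the quadratic structure makes the remainder term and the variation of the derivative both linear in the bilinear form applied to differences.
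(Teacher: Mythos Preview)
Your proposal is correct and follows essentially the same approach as the paper: both exploit that $K_{0,m}^{(\beta)}$ is a bounded linear term plus the diagonal of a bounded bilinear form (your $\widetilde{\mathcal K}$ is exactly the paper's $\mathcal K_{0,m}^{(0)}$), read off the Fr\'echet derivative as $\beta w_\alpha h + \widetilde{\mathcal K}(f,h)+\widetilde{\mathcal K}(h,f)$, and obtain (Lipschitz) continuity of the derivative from the same bilinear estimates \eqref{cest1}--\eqref{cest2}. The only cosmetic difference is that you make the quadratic remainder $\widetilde{\mathcal K}(h,h)$ explicit, whereas the paper leaves this implicit.
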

\begin{proof}
Recall that $K_{0,m}^{(\beta)} f= \mc K_{0,m}^{(\beta)}(f,f)$, where $\mc K_{0,m}^{(\beta)}: X_{0,m}\times X_{0,m} \mapsto X_{0,p}$ is the bilinear form defined by \eqref{mcK}.
Using \eqref{cest1} and \eqref{cest2}, we see that
 $K_{0,m}^{(\beta)}$ is Fr\'{e}chet differentiable at each $f \in  X_{0,m}$, with Fr\'{e}chet derivative given by
\[
[\p K_{0,m}^{(\beta)} f]h := \beta w_{\alpha} h + \mc K_{0,m}^{(0)}(f,h) + \mc K_{0,m}^{(0)}(h,f)\,, \quad h \in X_{0,m}.
\]
Moreover, again by \eqref{cest1} and \eqref{cest2}, for any $f,g,h \in X_{0,m}$,
\begin{eqnarray*}
\Vert [\p K_{0,m}^{(\beta)} f]h - [\p K_{0,m}^{(\beta)} g]h\Vert_{[0,p]}
&=& \Vert \mc K_{0,m}^{(0)}(f-g,h) + \mc K_{0,m}^{(0)}(h,f-g) \Vert_{[0,p]} \\
&\leq& 8\beta(1 +  2^{p}) \Vert h \Vert_{[0,m]}\,\Vert f - g \Vert_{[0,m]} \to 0,
\end{eqnarray*}
 as $\Vert f - g \Vert_{[0,m]} \to 0$, uniformly in $\|h\|_{[0,m]}\leq 1$. Hence, the Fr\'{e}chet derivative is continuous.
\end{proof}
\begin{lemma}
Assume that $1<p<m$ and $0<T<\infty$ be arbitrary. Let $\sem{S}$ be a strongly continuous semigroup on $X_{0,p}$ and  $\{P(t)\}_{t\geq 0}$ be a family of bounded linear operators from $X_{0,m}$ to $X_{0,p}$ such that, for all $u \in X_{0,p}$ and $f \in X_{0,m}$,
\[
\|S(t)u\|_{[0,m]}\leq M(t)t^{-\kappa}\|u\|_{[0,p]}\,, \quad \|P(t)f\|_{[0,p]}\leq L(t)\|f\|_{[0,m]},
\]
where $M,L \in  L_{\infty}([0,T])$  and $0<\kappa<1$. Moreover, let $g \in C((0,T], X_{0,m})$ be such that
$\|g(t)\|_{[0,m]} \leq  G(t)t^{-\delta}$, where $G \in  L_{\infty}([0,T])$ and $0<\delta<1$.  Then the integral equation
\begin{equation}
f(t) = g(t) + \cl{0}{t}S(t-s)P(s)f(s)\md s
\label{inteq}
\end{equation}
has a unique solution $f$ in $C((0,T],X_{0,m})$ that satisfies $\|f(t)\|_{[0,m]}\leq F(t)t^{-\delta}$ for some function  $F\in L_{\infty}([0,T]).$\label{leminteq}
\end{lemma}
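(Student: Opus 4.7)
The plan is to recast \eqref{inteq} as a fixed-point problem $f=\Phi f$ for
\[
[\Phi f](t) := g(t) + \cl{0}{t} S(t-s)P(s) f(s)\, \md s,
\]
in the weighted Banach space
\[
\mc E_T := \left\{ f \in C((0,T],X_{0,m}) : \|f\|_{\mc E_T} := \sup_{0<t\leq T} t^\delta \|f(t)\|_{[0,m]} < \infty \right\},
\]
designed to accommodate the $t^{-\delta}$ singularity of $g$ at $t=0$. By hypothesis $g\in \mc E_T$, so the natural arena is clear.

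To verify that $\Phi$ maps $\mc E_T$ into itself, for $f\in\mc E_T$ combine the two estimates of the lemma to get $\|S(t-s)P(s)f(s)\|_{[0,m]}\leq \|M\|_\infty \|L\|_\infty (t-s)^{-\kappa} s^{-\delta}\|f\|_{\mc E_T}$; the classical Beta-function identity $\cl{0}{t}(t-s)^{-\kappa} s^{-\delta}\md s = B(1-\kappa,1-\delta)\,t^{1-\kappa-\delta}$ (finite since $\kappa,\delta\in(0,1)$) then yields
\[
t^\delta \|[\Phi f](t)\|_{[0,m]}\leq G(t) + C\, t^{1-\kappa}\|f\|_{\mc E_T}, \qquad 0<t\leq T,
\]
with $C:=\|M\|_\infty\|L\|_\infty B(1-\kappa,1-\delta)$. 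Continuity of $\Phi f$ on $(0,T]$ follows by the same dominated-convergence arguments used for $I_2(h)$ and $I'_2(h)$ in the proof of Theorem \ref{lm3.1}, so indeed $\Phi:\mc E_T\to\mc E_T$.

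Since $\Phi$ is affine, the difference $\Phi f-\Phi h$ depends only on $f-h$ and obeys the analogous estimate $\|[\Phi f](t)-[\Phi h](t)\|_{[0,m]}\leq \widehat C\cl{0}{t}(t-s)^{-\kappa}\|f(s)-h(s)\|_{[0,m]}\md s$ with $\widehat C:=\|M\|_\infty\|L\|_\infty$. A straightforward induction on $N$, using repeatedly the identity $\cl{0}{t}(t-s)^{-\kappa} s^{\beta}\md s = B(1-\kappa,\beta+1)\,t^{1-\kappa+\beta}$ and telescoping the resulting product of Beta functions via $B(x,y)=\Gamma(x)\Gamma(y)/\Gamma(x+y)$, yields
\[
t^\delta \|[\Phi^N f](t)-[\Phi^N h](t)\|_{[0,m]} \leq \frac{\widehat C^N \Gamma(1-\kappa)^N \Gamma(1-\delta)}{\Gamma(1+N(1-\kappa)-\delta)}\,T^{N(1-\kappa)}\,\|f-h\|_{\mc E_T}.
\]
By Stirling the prefactor tends to zero as $N\to\infty$, hence $\Phi^N$ is a strict contraction on $\mc E_T$ for some $N$; Banach's fixed-point theorem applied to $\Phi^N$ then produces the unique fixed point of $\Phi$ in $\mc E_T$, which is the unique solution of \eqref{inteq} in this space. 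The required bound $\|f(t)\|_{[0,m]}\leq F(t)\,t^{-\delta}$ with $F\in L_\infty([0,T])$ is read off from the self-mapping estimate applied to this fixed point.

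The only genuine obstacle is the weak singularity $(t-s)^{-\kappa}$, which prevents a one-shot contraction on the whole interval $[0,T]$; iterating $\Phi$ together with the Beta--Gamma bookkeeping above (essentially a Henry--Gronwall type argument) is the standard remedy and amounts to careful calculation rather than a new idea. Uniqueness in $\mc E_T$ is then immediate from the same contraction estimate applied to any two hypothetical solutions.
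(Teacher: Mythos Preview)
Your argument is correct and actually somewhat cleaner than the paper's. The paper proceeds in two stages: first it iterates the integral operator on $g$ finitely many times, producing $g_0=g,\,g_1,\,\ldots,\,g_{n_0}$, until the accumulated Beta-function exponent $n_0(1-\kappa)-\delta$ becomes positive and $g_{n_0}\in C([0,T],X_{0,m})$ is genuinely continuous at $t=0$; it then rewrites \eqref{inteq} as an equation for $f-\sum_{i=0}^{n_0-1}g_i$ with continuous data $g_{n_0}$ and runs the $Q^k$-contraction argument in the \emph{unweighted} space $C([0,T],X_{0,m})$, invoking the Gronwall--Henry inequality for uniqueness. You bypass this splitting entirely by working from the outset in the weighted space $\mc E_T$, which absorbs the $t^{-\delta}$ singularity; the telescoping Gamma product you obtain,
\[
\frac{\widehat C^N\,\Gamma(1-\kappa)^N\,\Gamma(1-\delta)}{\Gamma\bigl(1+N(1-\kappa)-\delta\bigr)}\,T^{N(1-\kappa)},
\]
is the exact analogue of the paper's $Q^k$ estimate and gives the contraction directly. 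Both routes rest on the same Beta--Gamma bookkeeping; yours simply avoids the intermediate reduction to continuous data, at the modest cost of having to check (as you note) that the continuity argument for $\Phi f$ on $(0,T]$ still goes through when $P(s)f(s)$ carries an integrable $s^{-\delta}$ singularity near $s=0$ --- which it does. Uniqueness in both proofs is within the class of solutions obeying the stated growth bound at $t=0$.
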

\begin{proof}We use some ideas from \cite[Lemma 3.2]{Banasiak2019}. Denoting Laplace convolution by  $\ast$  and  defining $\theta_{r}(t) := t^{-r}$, a simple argument shows  that $\theta_\delta\ast \theta_{\kappa}$ exists for any choice of $\delta < 1$ and $\kappa < 1$, with
\begin{equation}\label{WLconv}
(\theta_{\delta}\ast \theta_{\kappa})(t)  = B(1-\delta,1-\kappa)\,t^{1-\delta-\kappa} = B(1-\delta,1-\kappa)\,\theta_{\delta + \kappa - 1}(t),
\end{equation}
where $B$ is the Beta function. Let us denote by $M_T,L_T$ and $G_T$ the (essential) suprema  of the respective functions $M, L$ and $G$ on $[0,T]$.  Then, for all $t \in  (0,T]$,  we have
\begin{eqnarray*}
\|g_1(t)\|_{[0,m]} &:=& \left\|\cl{0}{t}S(t-s)P(s)g(s)\md s\right\|_{[0,m]} \leq M_T L_T G_T \cl{0}{t} (t-s)^{-\kappa} s^{-\delta}\md s \\&=& M_T L_T G_T B(1-\kappa, 1-\delta)t^{-(\delta+\kappa-1)},
\end{eqnarray*}
and, by induction,
\begin{eqnarray*}
\|g_n(t)\|_{[0,m]} &:=&\left\|\cl{0}{t}S(t-s)P(s)g_{n-1}(s)\md s\right\|_{[0,m]} \\
&\leq& (M_T L_T )^nG_T \prod\limits_{i=1}^{n}B(1-\kappa, i-(i-1)\kappa -\delta)t^{-(n\kappa -n +\delta)},
\end{eqnarray*}
for all $n \in \mathbb{N}$. Since $n -n\kappa -\delta = n(1 - \kappa - \delta/n)$,   there exists $n_0 \in \mathbb{N}$ such that $n -n\kappa -\delta  > 0$ for all $n \geq n_0$.  Then, denoting $g(t) =g_0(t)$,  we can re-write \eqref{inteq} as
\begin{equation}
f(t) - \sum\limits_{i=0}^{n_0-1} g_i(t) = g_{n_0}(t)+\cl{0}{t} S(t-s)\left(f(s) - \sum\limits_{i=0}^{n_0-1} g_i(s)\right)\md s
\label{ineq1}
\end{equation}
where  we have  $g_{n_0} \in C([0,T], X_{0,m})$.  Consider now an operator  on $C([0,T], X_{0,m})$ given by the formula
$$
Qu(t) = g_{n_0}(t)+\cl{0}{t} S(t-s)P(s)u(s)\md s.
$$
The argument used in Theorem \ref{lm3.1} to prove  the continuity of the operator $\textsl{T}$ can be applied again to establish the continuity of $Q$.  Then, for $u,v \in C([0,T], X_{0,m})$ we obtain
\begin{align*}
\|Qu(t) -Qv(t)\|_{[0,m]} &\leq M_TL_T \sup\limits_{s\in [0,T]}\|u(s)-v(s)\|_{[0,m]}\cl{0}{t} (t-s)^{-\kappa}\md s \\&= M_TL_T \sup\limits_{s\in [0,T]}\|u(s)-v(s)\|_{[0,m]}B(1-\kappa,1)t^{1-\kappa}
\end{align*}
and, again by induction,
\begin{align*}
&\|Q^ku(t) -Q^kv(t)\|_{[0,m]}\\ &\leq M^k_TL^k_T \sup\limits_{s\in [0,T]}\|u(s)-v(s)\|_{[0,m]}\prod_{i=0}^{k-1} B(1-\kappa, i+1 -i\kappa) t^{-i(\kappa-1)}.
\end{align*}
Now, using the fact that $B(x,y) = \frac{\Gamma(x)\Gamma(y)}{\Gamma(x+y)}$ and   \cite[Inequality 6.1.47]{abramowitz1964handbook}
$$
\frac{\Gamma(y)}{\Gamma(x+y)} \leq c_y y^{-x}, x> 0, y\to \infty,
$$
we see, on account of $ i+1 -i\kappa = i(1-\kappa) + 1\geq 1$ for $i\geq 0$, that
\begin{align*}
\prod_{i=0}^{k-1} B(1-\kappa, i+1 -i\kappa)&\leq \Gamma^k(1-\kappa)c_{1-\kappa}^k \prod_{i=0}^{k-1} (i(1-\kappa)+1)^{-(1-\kappa)}\\
&\leq \Gamma^k(1-\kappa)c_{1-\kappa}^k \left(\frac{1}{1-\kappa}\right)^{k(1-\kappa)}\left(\frac{1}{(k-1)!}\right)^{1-\kappa}.
\end{align*}
Hence, for some constant $C_T,$
\begin{align*}
\sup\limits_{t\in [0,T]}\|Q^ku(t) -Q^kv(t)\|_{[0,m]} &\leq \left(\frac{C_T^{\frac{k}{1-\kappa}}}{(k-1)!}\right)^{1-\kappa} \sup\limits_{s\in [0,T]}\|u(s)-v(s)\|_{[0,m]}
\end{align*}
and therefore there exists $k$ such that $Q^k$ is a contraction.  Thus, the equation $u = Qu$ has  a unique  solution $u \in C([0,T], X_{0,m})$ (the uniqueness follows from the Gronwall-Henry inequality, see \cite[Lemma 3.2]{Banasiak2019}) and therefore
$$
f(t) = u(t) + \sum\limits_{i=0}^{n_0-1} g_i(t)
$$
is a unique solution to \eqref{inteq} satisfying the growth condition at $t=0$.
\end{proof}
We now give the following lemma which seems to belong to mathematical folklore.
\begin{lemma}
Let $X,Y$ be Banach spaces and let $K$ be a continuously Fr\'{e}chet differentiable operator from $X$ to $Y$ with Fr\'{e}chet derivative $\p K\in C( X, \mc L(X,Y))$. Further, let the remainder $\omega: X\times X\mapsto Y$ be defined by
$$
K(x+h)- K(x) - \p K(x) h = \omega(h,x), \quad x, h\in X.
$$
Then the function $\omega_0(h,x) := \frac{\omega(h,x)}{\|h\|_X}$ if $h\neq 0$ and $\omega_0(0,x) := 0$ otherwise, is continuous.\label{rem}\end{lemma}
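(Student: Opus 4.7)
The plan is to split the proof into two cases, depending on whether the first argument is zero or not. Continuity of $\omega_0$ at any point $(h_0,x_0)$ with $h_0\neq 0$ is immediate: the numerator $\omega(h,x) = K(x+h)-K(x)-\partial K(x)h$ is continuous on $X\times X$ (as a composition of the continuous maps $K$ and $(x,h)\mapsto \partial K(x)h$), and the denominator $\|h\|_X$ is continuous and bounded away from $0$ on a neighbourhood of $(h_0,x_0)$; hence the quotient is continuous there.

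The substantive case is continuity at points of the form $(0,x_0)$, where by definition $\omega_0(0,x_0)=0$. The key tool is the integral form of the mean value theorem for Fréchet differentiable maps between Banach spaces, which yields
\begin{equation*}
K(x+h)-K(x) = \int_0^1 \partial K(x+th)\,h\,\mathrm{d}t,
\end{equation*}
so that, for $h\neq 0$,
\begin{equation*}
\omega(h,x) = \int_0^1 \bigl[\partial K(x+th)-\partial K(x)\bigr]\,h\,\mathrm{d}t
\end{equation*}
and consequently
\begin{equation*}
\|\omega_0(h,x)\|_Y \leq \int_0^1 \bigl\|\partial K(x+th)-\partial K(x)\bigr\|_{\mathcal{L}(X,Y)}\,\mathrm{d}t.
\end{equation*}

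Given $\varepsilon>0$, by continuity of $\partial K$ at $x_0$ there is $\delta>0$ with $\|\partial K(y)-\partial K(x_0)\|_{\mathcal{L}(X,Y)}<\varepsilon$ whenever $\|y-x_0\|_X<\delta$. For $(h,x)$ satisfying $\|h\|_X<\delta/2$ and $\|x-x_0\|_X<\delta/2$, both $x+th$ (for every $t\in[0,1]$) and $x$ lie in the $\delta$-ball around $x_0$, so the integrand above is bounded by $2\varepsilon$ uniformly in $t$. This gives $\|\omega_0(h,x)\|_Y\leq 2\varepsilon$ for such $(h,x)$ with $h\neq 0$, and the case $h=0$ is trivial since $\omega_0(0,x)=0$. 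Thus $\omega_0$ is continuous at $(0,x_0)$, completing the proof.

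I do not anticipate a genuine obstacle here; the only point requiring a little care is ensuring the bound is uniform in the integration variable $t\in[0,1]$, which is handled by choosing a single $\delta$-neighbourhood of $x_0$ containing the entire segment $\{x+th:t\in[0,1]\}$ for $(h,x)$ sufficiently close to $(0,x_0)$.
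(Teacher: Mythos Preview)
Your proof is correct and follows essentially the same route as the paper's: both reduce to the integral mean value estimate $\|\omega_0(h,x)\|_Y \le \int_0^1 \|\partial K(x+th)-\partial K(x)\|\,\mathrm{d}t$ and then use continuity of $\partial K$ to make the right-hand side small near $(0,x_0)$. The only cosmetic difference is that the paper argues sequentially and invokes the dominated convergence theorem (using boundedness of $\partial K$ on the compact set traced out by the segments), whereas your direct $\varepsilon$--$\delta$ choice of a single ball containing all the segments is slightly more elementary.
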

  \begin{proof}
  By the definition of Fr\'{e}chet differentiability, $\lim_{h\to 0}\frac{\omega(h,x)}{\|h\|_X} = 0$ for any $x\in X$.
  The only questionable points are $(h,x) =(0,x)$. Let us consider $(h_n,x_n)\to (0,x)$,  where $h_n\neq 0$ for  $n \in \mathbb{N}$. Then
\begin{align*}
\|\omega_0(h_n,x_n)\|_Y& = \frac{\left\|K(x_n+h_n)- K(x_n) - \p K(x_n) h_n\right\|_Y }{\|h_n\|_X}\\
& \leq  \cl{0}{1}\|\p K(x_n + th_n) - \p K(x_n) \| \md t.
\end{align*}
Now, $ \|\p K(x_n + th_n) - \p K(x_n) \| \to 0$ for any $t\in [0,1]$   and $\|\p K(x_n + th_n) - \p K(x_n) \|$ is bounded irrespective of $n$ as $\p K$ is continuous and the sets $\{x_n \}_{n \in \mbb N}$ and $\{x_n + th_n\}_{n\in \mbb N, t\in [0,1]}$  are compact. Consequently, $\|\omega_0(h_n,x_n)\|_Y\to 0$ by the Lebesgue dominated convergence theorem.  \end{proof}
In the next theorem we address the issue of differentiability of the mild solution constructed in Theorem \ref{lm3.1} and it being a classical solution to \eqref{ACPbeta}. The result is similar to that for analytic semigroups in that the mild solution in a smaller space (here $X_{0,m}$) is a classical solution in a bigger space (here $X_{0,p}$), see \cite[Definitions 7.0.1 \& 7.0.2]{Lun} or \cite[Section 4.7.1]{SY}.
\begin{theorem}\label{th3.4} Let the assumptions of Theorem \ref{lm3.1} hold and assume also that $\mr f \in X_{0,m} \cap D(G_{0,p}^{(\beta)})$, where $p = m-\alpha$.  Then the mild solution $f$,  defined on its maximal interval of existence $[0,\tau_{\max})$, satisfies  $f \in C([0, \tau_{\max}), X_{0,m}) \cap C^{1}((0, \tau_{\max}), X_{0,m})\cap C((0, \tau_{\max}), D(G_{0,p}^{(\beta)}))$ and is a classical solution to \eqref{ACPbeta} in $X_{0,p}$.\end{theorem}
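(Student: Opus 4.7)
The plan is to adapt the Pazy-style proof (Theorem 6.1.5 of \cite{Pa}) to our singular-kernel setting, with Lemmas~\ref{leminteq} and \ref{rem} substituting for the role of Lipschitz continuity of the nonlinearity that is used in the standard argument. The candidate for $f'$ will be the solution $v\in C((0,\tau],X_{0,m})$, for any $\tau<\tau_{\max}$, of the linearised Volterra equation
\begin{equation}\label{vveq}
v(t) = S_{G_{0,p}^{(\beta)}}(t)\bigl[G_{0,p}^{(\beta)}\mr f + K_{0,m}^{(\beta)}\mr f\bigr] + \cl{0}{t} S_{G_{0,p}^{(\beta)}}(t-s)\,[\partial K_{0,m}^{(\beta)}(f(s))]\,v(s)\,\md s.
\end{equation}
To apply Lemma~\ref{leminteq} with $\kappa := (m-n)/\gamma_0\in(0,1)$ (as in the proof of Theorem~\ref{lm3.1}), I note that Theorem~\ref{regthm} yields the required smoothing of $\|S_{G_{0,p}^{(\beta)}}(t-s)\,\cdot\,\|_{[0,m]}$ from $X_{0,p}$; Lemma~\ref{lemdif} together with continuity of $f:[0,\tau]\to X_{0,m}$ gives a uniform bound on $P(s):=\partial K_{0,m}^{(\beta)}(f(s))\in\mc L(X_{0,m},X_{0,p})$; and the forcing term satisfies $\|g(t)\|_{[0,m]}\leq Ct^{-\kappa}$ since $G_{0,p}^{(\beta)}\mr f+K_{0,m}^{(\beta)}\mr f\in X_{0,p}$, using $\mr f\in D(G_{0,p}^{(\beta)})$ together with the continuity $K_{0,m}^{(\beta)}:X_{0,m}\to X_{0,p}$ established via \eqref{boundedbl}.

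Next, I will show $u_h(t):=(f(t+h)-f(t))/h \to v(t)$ in $X_{0,m}$ uniformly on compact subsets of $(0,\tau]$. Substituting the mild equation at time $t+h$, performing the change of variables $\sigma=s-h$ on the translated portion of the integral, and applying the Fr\'echet expansion of $K_{0,m}^{(\beta)}$ from Lemma~\ref{rem} leads to
\[
u_h(t) = \epsilon_h(t) + \cl{0}{t} S_{G_{0,p}^{(\beta)}}(t-s)\,\partial K_{0,m}^{(\beta)}(f(s))\,u_h(s)\,\md s + r_h(t),
\]
where $\epsilon_h(t)\to S_{G_{0,p}^{(\beta)}}(t)[G_{0,p}^{(\beta)}\mr f+K_{0,m}^{(\beta)}\mr f]$ in $X_{0,m}$ (using $\mr f\in D(G_{0,p}^{(\beta)})$, strong continuity, and again the smoothing of Theorem~\ref{regthm}), and $r_h(t):=h^{-1}\cl{0}{t}S_{G_{0,p}^{(\beta)}}(t-s)\,\omega(f(s+h)-f(s),f(s))\,\md s$. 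Lemma~\ref{rem} combined with uniform continuity of $f$ on $[0,\tau]$ gives $\|\omega(f(s+h)-f(s),f(s))\|_{[0,p]}\leq \eta(h)\|f(s+h)-f(s)\|_{[0,m]}$ with $\eta(h)\to 0$, whence $\|r_h(t)\|_{[0,m]}\leq \eta(h)\,M_\tau\cl{0}{t}(t-s)^{-\kappa}\|u_h(s)\|_{[0,m]}\md s$. A singular Gronwall-Henry argument, in the spirit of the contraction iteration for $Q^k$ in the proof of Lemma~\ref{leminteq}, first produces an $h$-uniform \emph{a priori} bound on $u_h$ and then, after subtracting \eqref{vveq}, shows that $w_h(t):=u_h(t)-v(t)$ satisfies a Volterra inequality whose driving term tends to $0$ in $X_{0,m}$; hence $w_h\to 0$, and therefore $f\in C^1((0,\tau_{\max}),X_{0,m})$ with $f'=v$.

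Finally, rewriting the mild formula at time $t+h$ via the semigroup property yields
\[
\frac{S_{G_{0,p}^{(\beta)}}(h)-I}{h}f(t) = u_h(t) - \frac{1}{h}\cl{t}{t+h}S_{G_{0,p}^{(\beta)}}(t+h-s)\,K_{0,m}^{(\beta)}f(s)\,\md s.
\]
As $h\to 0^+$, the right-hand side converges in $X_{0,p}$ to $f'(t)-K_{0,m}^{(\beta)}f(t)$ (since $X_{0,m}\hookrightarrow X_{0,p}$ and $K_{0,m}^{(\beta)}\circ f\in C([0,\tau],X_{0,p})$ by Lemma~\ref{lemdif}), so $f(t)\in D(G_{0,p}^{(\beta)})$ with $G_{0,p}^{(\beta)}f(t)=f'(t)-K_{0,m}^{(\beta)}f(t)$, which is exactly the ACP \eqref{ACPbeta} in $X_{0,p}$; continuity of $G_{0,p}^{(\beta)}f(\cdot)$ into $X_{0,p}$ then follows immediately. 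The principal technical obstacle is the \emph{a priori} bound on $\|u_h(t)\|_{[0,m]}$ in the second paragraph: because the kernel $(t-s)^{-\kappa}$ is singular and $K_{0,m}^{(\beta)}$ is not globally Lipschitz from $X_{0,m}$ into itself, this bound cannot be obtained by a direct contraction as in Pazy's Theorem~6.1.5, but must be derived by the singular Gronwall iteration underlying Lemma~\ref{leminteq}, simultaneously controlling both the Volterra term and the remainder $r_h$.
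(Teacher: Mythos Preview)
Your proposal is correct and follows essentially the same strategy as the paper: construct the candidate derivative via the linearised Volterra equation and Lemma~\ref{leminteq}, prove convergence of the difference quotients via the Fr\'echet expansion (Lemmas~\ref{lemdif} and \ref{rem}) combined with a singular Gronwall--Henry argument, and then show $f(t)\in D(G_{0,p}^{(\beta)})$ by applying the semigroup difference quotient to the mild formula. The only organisational differences are that the paper first isolates the Lipschitz bound $\|f(t+h)-f(t)\|_{[0,m]}/h\le C t^{-\kappa}$ as a separate preliminary step (using only the Lipschitz estimate \eqref{K1B} rather than the full Fr\'echet expansion) before tackling convergence, and in the final step it uses a four-term decomposition $L_1+L_2+L_3+L_4$ where you employ the single compact identity $\tfrac{S(h)-I}{h}f(t)=u_h(t)-\tfrac{1}{h}\int_t^{t+h}S(t+h-s)K_{0,m}^{(\beta)}f(s)\,\md s$.
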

\begin{proof} The proof follows the lines of \cite[Theorem 6.1.5]{Pa} but additional steps are required due to the unboundedness of the nonlinear term. To simplify the notation we observe that it suffices to prove the additional regularity on $(0,\tau)$ of the local solution constructed in Theorem \ref{lm3.1}.  If $\tau \ne \tau_{\max}$, then we extend the result in the usual manner to a larger interval $(0,\tau_1),\, \tau_1 > \tau$, by taking $f(t_0)$ as a new initial value, for some $0<  t_0  < \tau$. Provided the theorem holds on $(0,\tau)$, we know that $f(t_0)\in D(G_{0,m}^{(\beta)}) \subset D(G_{0,p}^{(\beta)})$  and we repeat the proof on $(t_0, \tau_1)$ which overlaps with $(0,\tau)$ on an open interval and thus the theorem is valid on $(0,\tau_1)$. Continuing in this manner, we eventually reach $\tau_{\max}$.

As in the proof of Theorem \ref{lm3.1}, we choose $n$ so that $\kappa :=  \frac{m-n}{\gamma_0} \in (0,1).$
Since $\mr f \in D(G_{0,p}^{(\beta)})$, the mild solution $f$ satisfies the integral equation
\begin{align}
f(t) &=   S_{G_{0,m}^{(\beta)}}(t)\mr f + \cl{0}{t} S_{G_{0,p}^{(\beta)}}(t-s)K_{0,m}^{(\beta)} f(s)\md s\nn\\
& = S_{G_{0,p}^{(\beta)}}(t)\mr f + \cl{0}{t} S_{G_{0,p}^{(\beta)}}(s)K_{0,m}^{(\beta)} f(t-s)\md s.
\label{mild}
\end{align}
We first consider the Lipschitz continuity of $f$. Let $t>0$ and $h>0$. We have
\begin{eqnarray*}
&& \frac{f(t+h) - f(t)}{h}  \  =  \  \frac{1}{h}\left(S_{G_{0,m}^{(\beta)}}(h) - I\right) S_{G_{0,m}^{(\beta)}} (t)\mr f \\
&+& \frac{1}{h} \int_{0}^{h} S_{G_{0,p}^{(\beta)}} (t + h - s) K_{0,m}^{(\beta)} f(s)\md s\\&
	+& \frac{1}{h} \int_{0}^{t} S_{G_{0,p}^{(\beta)}} (t - s)(K_{0,m}^{(\beta)} f(s+h)-K_{0,m}^{(\beta)} f(s)) \md s  =:\  I_1(h) + I_2(h) + I_3(h).
	\end{eqnarray*}
Arguing as in Corollary \ref{correg}, we have
\begin{eqnarray*}
&& \left\|\frac{1}{h}\left(S_{G_{0,m}^{(\beta)}} (h) - I\right)\, S_{G_{0,m}^{(\beta)}} (t)\mr f\right\|_{[0,m]} \ = \ \ \left\|\frac{1}{h} S_{G_{0,p}^{(\beta)}}(t)\left(S_{G_{0,p}^{(\beta)}} (h) - I\right) \mr f\right\|_{[0,m]}\\
&& \ \leq   C(\tau)t^{-\kappa} \left\|\frac{1}{h}\left(S_{G_{0,p}^{(\beta)}} (h) - I\right) \mr f\right\|_{[0,p]}
\leq  C_1(\tau)t^{-\kappa} \|G_{0,p}^{(\beta)}\mr f\|_{[0,p]},
\end{eqnarray*}
where $C(\tau) = Ce^{\theta \tau}$, see \eqref{regest}, and $C_1(\tau) = C(\tau)\max\limits_{0\leq t\leq \tau}\|S_{G_{0,p}^{(\beta)}}(t)\|_{[0,p]}$.

 Next, using \eqref{cest2a},
$$
\| S_{G_{0,p}^{(\beta)}} (t + h - s) K_{0,m}^{(\beta)} f(s)\|_{[0,m]}\leq C(\tau) K(\mc U) (t+h-s)^{-\kappa}
$$
and
\begin{align*}
\frac{1}{h} \cl{0}{h} \| S_{G_{0,p}^{(\beta)}} (t + h -s) K_{0,m}^{(\beta)} f(s)\|_{[0,m]} \md s &\leq C(\tau) K(\mc U)\frac{1}{h} \cl{0}{h}(t+h-s)^{-\kappa}\md s \\
\leq C(\tau) K(\mc U)t^{-\kappa} \frac{1}{h} \cl{0}{h}\md s &=  C(\tau) K(\mc U)t^{-\kappa}.
\end{align*}
Finally,  as in \eqref{K1B},
\begin{align*}
&\frac{1}{h} \int_{0}^{t}\| S_{G_{0,p}^{(\beta)}} (t - s)(K_{0,m}^{(\beta)} f(s+h)-K_{0,m}^{(\beta)} f(s))\|_{0,m} \md s\\& \leq M(\tau)L(\mc U) \cl{0}{t}(t-s)^{-\kappa}\frac{\|f(s +h) - f(s)\|_{[0,m]}}{h} \md s. \end{align*}
Thus, for some constants $C_1, C_2$
$$
\frac{\|f(t +h) - f(t)\|_{[0,m]}}{h} \leq \frac{C_1}{t^\kappa} + C_2\cl{0}{t}(t-s)^{-\kappa}\frac{\|f(s +h) - f(s)\|_{[0,m]}}{h} \md s
$$
and, by the Gronwall--Henry inequality \cite[Lemma 7.1]{BLL}, for some constant $C_3$,
\begin{equation}
\frac{\|f(t +h) - f(t)\|_{[0,m]}}{h} \leq C_3t^{-\kappa}.
\label{LC1}
\end{equation}
We note that in the estimates above, we can use the same bounds for $f(t)$ and $f(t+h)$ as the function $t\mapsto f(t+h)$ can be treated as the solution for the initial value $f(h)$ which is in $\mc U$ for $h$ small enough.

To prove the differentiability of $f$,  first we observe that formally differentiating \eqref{mild} gives, for $t \in (0,\tau),$
\begin{equation}
\p_tf(t) = S_{G_{0,p}^{(\beta)}}(t)G_{0,p}^{(\beta)}\mr f  + S_{G_{0,p}^{(\beta)}}(t)K_{0,m}^{(\beta)} \mr f + \cl{0}{t} S_{G_{0,p}^{(\beta)}}(t-s)\partial K_{0,m}^{(\beta)}f(s)\p_sf(s)\md s.
\label{mild1}
\end{equation}
On defining $g(t) :=G_{0,p}^{(\beta)} S_{G_{0,p}^{(\beta)}}(t)\mr f  + S_{G_{0,p}^{(\beta)}}(t)K_{0,m}^{(\beta)} \mr f$ and $P(s)=\partial K_{0,m}^{(\beta)} f(s),$ we see that the derivative of $f$, if it exists, satisfies the linear integral equation
\begin{equation}
w(t) = g(t) + \cl{0}{t} S_{G_{0,m}^{(\beta)}}(t-s)P(s)w(s)\md s.
\label{mild2}
\end{equation}
Now, for $t>0, h>0,$
\begin{align*}
&\|S_{G_{0,p}^{(\beta)}}(t+h)(G_{0,p}^{(\beta)}\mr f +K_{0,m}^{(\beta)} \mr f )- S_{G_{0,p}^{(\beta)}}(t)(G_{0,p}^{(\beta)}\mr f+K_{0,m}^{(\beta)} \mr f )\|_{[0,m]}\\
& = \|S_{G_{0,p}^{(\beta)}}(t)(S_{G_{0,p}^{(\beta)}}(h)-I)(G_{0,p}^{(\beta)}\mr f+K_{0,m}^{(\beta)} \mr f )\|_{[0,m]}\\& \leq C(\tau)t^{-\kappa}\|(S_{G_{0,p}^{(\beta)}}(h)-I)(G_{0,p}^{(\beta)}\mr f+K_{0,m}^{(\beta)} \mr f )\|_{[0,p]}
\end{align*}
and, analogously, for left-hand  limits. Hence $t\mapsto g(t)$ is in  $C((0,\tau), X_{0,m})$ and is $O(t^{-\kappa})$ close to $t=0$. Next, by Lemma \ref{lemdif}, $s\mapsto P(s)$ is a continuous function that takes values in $\mc L(X_{0,m}, X_{0,p})$. Hence, Lemma \ref{leminteq} yields the existence of a solution $w \in C((0,T], X_{0,m})$ to \eqref{mild2} for any $0<T<\tau$, with $\|w(t)\|_{[0,m]} = O(t^{-\kappa})$ as $t\to 0^+$.

Next,  we prove that $f$ is differentiable in $X_{0,m}$ for $0<t<\tau$. From  \eqref{eq3.2}, we obtain
\[
\frac{f(t+h) - f(t)}{h}  -w(t) = J_1(h) + J_2(h) + J_3(h),
\]
where
\begin{eqnarray*}
J_1(h) &:=&  \frac{1}{h}\left(S_{G_{0,p}^{(\beta)}} - I\right) S_{G_{0,p}^{(\beta)}} (t)\mr f - S_{G_{0,p}^{(\beta)}}(t)G_{0,p}^{(\beta)}\mr f, \\
J_2(h) &:=& \frac{1}{h} \int_{0}^{h}\left( S_{G_{0,p}^{(\beta)}} (t + h - s) K_{0,m}^{(\beta)} f(s)  - S_{G_{0,p}^{(\beta)}}(t)K_{0,m}^{(\beta)} \mr f\right)\md s,\\
J_3(h) &:=& \frac{1}{h} \int_{0}^{t} S_{G_{0,p}^{(\beta)}} (t - s)(K_{0,m}^{(\beta)} f(s+h)-K_{0,m}^{(\beta)} f(s)) \md s\\
&& -\cl{0}{t} S_{G_{0,p}^{(\beta)}}(t-s)P(s)w(s)\md s.
\end{eqnarray*}
Clearly $\lim_{h\to 0^+} J_1(h) =0$ by \eqref{247}.  For $J_2(h)$, we take $t > 0$ and $0 \leq s \leq h \leq t/2$. Then
\begin{align*}
&\|S_{G_{0,p}^{(\beta)}} (t + h - s) K_{0,m}^{(\beta)} f(s) - S_{G_{0,p}^{(\beta)}} (t ) K_{0,m}^{(\beta)} \mr f\|_{[0,m]}\\
&\leq \|S_{G_{0,p}^{(\beta)}} (t - s)(S_{G_{0,p}^{(\beta)}} ( h) -I)K_{0,m}^{(\beta)} f(s) \|_{[0,m]}\\&\phantom{xx}+\|S_{G_{0,p}^{(\beta)}} (t - s) (K_{0,m}^{(\beta)} f(s) - K_{0,m}^{(\beta)} \mr f)\|_{[0,m]}\\&\phantom{xx}+\|(S_{G_{0,p}^{(\beta)}} (t - s) -  S_{G_{0,p}^{(\beta)}} (t )) K_{0,m}^{(\beta)}\mr f\|_{[0,m]}
=:\mathcal{J}_1(s,h)+\mathcal{J}_2(s)+\mathcal{J}_3(s).
\end{align*}
Now
\begin{align*}
\mathcal{J}_1(s,h) &\leq Ce^{\theta (t-s)} (t-s)^{-\kappa} \|(S_{G_{0,p}^{(\beta)}} ( h) -I)K_{0,m}^{(\beta)} f(s)\|_{[0,p]}.
\end{align*}
Since $t\mapsto S_{G_{0,p}^{(\beta)}}(t)$ is strongly continuous in $X_{0,p}$, it is uniformly continuous on compact sets of $X_{0,p}$; that is, for any compact set  $\Omega\subset X_{0,p}$ and each $\e>0$, there exists $h_0 > 0$ such that  $\sup_{u\in \Omega} \|S_{G_{0,p}^{(\beta)}}(h)u-u\|_{[0,p]}\leq \e$ for all $0<h<h_0$. Moreover, as  the function $s \mapsto K_{0,m}^{(\beta)} f(s)$ is $X_{0,p}$-continuous for any $X_{0,m}$-continuous function $f$, and the continuous  image of the compact interval $\left[0,\frac{t}{2}\right]$ is compact, we see that for any $\e>0$ there is $h_0<\frac{t}{2}$ such that for all $0<h\leq h_0$
\begin{equation}
\mathcal{J}_1(s,h)\leq \e
\label{J1}
\end{equation}
uniformly in $s \in [0,h_0]$. Similarly, by \eqref{K1B},
\begin{align*}
\mathcal{J}_2(s)&\leq \|S_{G_{0,p}^{(\beta)}} (t - s) (K_{0,m}^{(\beta)} f(s) - K_{0,m}^{(\beta)}) \mr f\|_{[0,m]}\\
& \leq Ce^{\theta (t-s)} (t-s)^{-\kappa}\|K_{0,m}^{(\beta)} f(s) - K_{0,m}^{(\beta)} \mr f\|_{[0,p]}\\&\leq  Ce^{\theta (t-s)} (t-s)^{-\kappa}L(\mc U)\|f(s) - \mr f\|_{[0,m]}
\end{align*}
and for any $\e$ there is $0<h_0<\frac{t}{2}$ such that for any $0\leq s \leq h\leq h_0$ we have
\begin{equation}
\mathcal{J}_2(s)\leq \e.
\label{J2}
\end{equation}
Finally, as with $\mathcal{J}_1$,
\begin{align*}
\mathcal{J}_3(s)&\leq \|S_{G_{0,p}^{(\beta)}} (t - s)(S_{G_{0,p}^{(\beta)}} (s )-I) K_{0,m}^{(\beta)} \mr f\|_{[0,m]} \\
&= Ce^{\theta (t-s)} (t-s)^{-\kappa} \|(S_{G_{p}} ( s) -I)K_{0,m}^{(\beta)} \mr f\|_{[0,p]},
\end{align*}
	 hence $\mathcal{J}_3$ is a continuous function at $0$ and therefore,
\begin{equation}
\lim\limits_{h\to 0^+} \frac{1}{h}\cl{0}{h}\mathcal{J}_3(s)\md s = 0.
\label{J3}
\end{equation}
Summarizing,
$$
\lim\limits_{h\to 0^+} J_2(h) =\lim\limits_{h\to 0^+} \frac{1}{h} \int_{0}^{h} S_{G_{0,p}^{(\beta)}} (t + h - s) K_{0,m}^{(\beta)} f(s) \md s - S_{G_{0,p}^{(\beta)}} (t ) K_{0,m}^{(\beta)} \mr f =0.
$$
Finally, by Lemma \ref{lemdif}, and with $\omega$ defined as in Lemma \ref{rem},
$$
K_{0,m}^{(\beta)} f(s+h)-K_{0,m}^{(\beta)} f(s) - P(s) (f(s+h) - f(s)) =  \omega(f(s+h) - f(s),f(s)).
$$
Now
$$
\frac{\|\omega(f(s+h)\! -\! f(s))\|_{[0,p]}}{h} = \frac{\|\omega(f(s+h)\! -\! f(s),f(s))\|_{[0,p]}}{\|f(s+h)\! - \! f(s)\|_{[0,m]}}\frac{\|f(s+h) \!- \!f(s)\|_{[0,m]}}{h}.
$$
By Lemma \ref{rem}, the function $$(h,s)\mapsto \frac{\|\omega(f(s+h) - f(s), f(s))\|_{[0,p]}}{\|f(s+h) - f(s)\|_{[0,m]}}$$ is continuous on $[0,h_0]\times [0,s']$ for any $s'<s$ and hence it is uniformly continuous. Thus,  for any $\e>0$ there is $h_0$ such that for any $0<h<h_1\leq h_0, s \in [0,s']$
$$
\frac{\|\omega(f(s+h) - f(s), f(s))\|_{[0,p]}}{\|f(s+h) - f(s)\|_{[0,m]}}\leq \e.
$$
Hence, by \eqref{LC1} and \eqref{WLconv}
\begin{align*}
&\left\|\frac{1}{h} \int_{0}^{t}\!\! S_{G_{0,p}^{(\beta)}} (t \!-\! s)(K_{0,m}^{(\beta)} f(s+h)\!-\!K_{0,m}^{(\beta)} f(s)) \md s \!-\!\!\cl{0}{t}\!\! S_{G_{0,p}^{(\beta)}}(t-s)P(s)w(s)\md s\right\|_{[0,m]}\\
&= \cl{0}{t} \left\|S_{G_{0,p}^{(\beta)}} (t - s)\frac{\omega(f(s+h) - f(s),f(s))}{h}\right\|_{[0,m]} \md s \\&\phantom{x}+ \cl{0}{t}\left\|S_{G_{0,p}^{(\beta)}} (t - s)P(s)\left( \frac{f(s+h) - f(s)}{h} - w(s)\right )\right\|_{[0,m]}\md s\\
&\leq C_1\cl{0}{t}  (t - s)^{-\kappa}\left\|\frac{\omega(f(s+h) - f(s),f(s))}{h}\right\|_{[0,p]} \md s\\&\phantom{x} + C_2\cl{0}{t} (t - s)^{-\kappa}\left\| \frac{f(s+h) - f(s)}{h} - w(s)\right\|_{[0,m]}\md s \\
&\leq C_1C_3\e\cl{0}{t}  (t - s)^{-\kappa}s^{-\kappa} \md s + C_2\cl{0}{t} (t - s)^{-\kappa}\left\| \frac{f(s+h) - f(s)}{h} - w(s)\right\|_{[0,m]}\!\!\!\md s \\
&= C_1C_3B(1-\kappa,1-\kappa) \e t^{1-2\kappa} + C_2\!\!\cl{0}{t} (t - s)^{-\kappa}\left\| \frac{f(s+h) - f(s)}{h} - w(s)\right\|_{[0,m]}\!\!\!\md s.
\end{align*}
 Since for small $t$ we have $t^{1-2\kappa}\leq t^{-\kappa}$, it follows that, on any time interval $(0,s')$ where  $s' < s$, and for any $\e>0$, there is $h_0$ such that for any $0<h<h_0$
 \begin{align*}
 &\left\| \frac{f(t+h) - f(t)}{h} - w(t)\right\|_{[0,m]}\\
 &\phantom{xx}\leq \e t^{-\kappa}C_5 + C_2 \cl{0}{t} (t - s)^{-\kappa}\left\| \frac{f(s+h) - f(s)}{h} - w(s)\right\|_{[0,m]}\md s
 \end{align*}
 and thus, by \cite[Lemma 3.2]{Banasiak2019},
 $$
 \left\| \frac{f(t+h) - f(t)}{h} - w(t)\right\|_{[0,m]}\leq \e t^{-\kappa} C_6.
 $$
 Hence the right-hand derivative of $f$ exists on $(0,\tau)$, and  satisfies \eqref{mild1}. As in the proof of Theorem \ref{lm3.1}, the right-hand side of \eqref{mild1} is continuous on $(0,s)$ and thus the left-hand  derivative also exists.
 Hence $f \in C^1((0,\tau), X_{0,m})$.

 To show that $f(t) \in D(G_{0,p}^{(\beta)})$ for $t>0$, we evaluate
 \begin{align*}
&	   \frac{1}{h}(S_{G_{0,p}^{(\beta)}} (h) - I)f(t) = \frac{1}{h}(S_{G_{0,p}^{(\beta)}} (h) - I) S_{G_{0,p}^{(\beta)}} (t)\mr f \\& + \frac{1}{h} \cl{0}{t} S_{G_{0,m}^{(\beta)}} (t  - s) K_{0,m}^{(\beta)} f(s+ h) \md s
	- \frac{1}{h} \cl{0}{t} S_{G_{0,p}^{(\beta)}} (t - s)K_{0,m}^{(\beta)} f(s) \md s\\
&=   \frac{1}{h}S_{G_{0,p}^{(\beta)}} (t)(S_{G_{p}} (h) - I) \mr f + \frac{1}{h} \int_{0}^{h} S_{G_{0,p}^{(\beta)}} (t + h - s) K_{0,m}^{(\beta)} f(s)\md s \\
&\phantom{x}- \frac{1}{h} \int_{t-h}^{t} S_{G_{0,p}^{(\beta)}} (t + h - s) K_{0,m}^{(\beta)} f(s)\md s\\&\phantom{x}
	+ \frac{1}{h} \int_{0}^{t} S_{G_{0,p}^{(\beta)}} (t - s)(K_{0,m}^{(\beta)} f(s+h)-K_{0,m}^{(\beta)} f(s)) \md s \\&
 =: L_1(h) + L_2(h) + L_3(h) +L_4(h).
	\end{align*}
 Using again \eqref{247}, $L_1(h) \to  S_{G_{0,m}^{(\beta)}}(t) G_{0,m}^{(\beta)}\mr f$ in $X_{0,m}$ for $t>0.$ Also, as above,
 $$
 \lim\limits_{h\to 0^+} L_2(h) = S_{G_{0,m}^{(\beta)}} (t ) K_{0,m}^{(\beta)} \mr f
 $$
 and
 $$
 \lim\limits_{h\to 0^+} L_4(h) = \cl{0}{t} S_{G_{0,p}^{(\beta)}}(t-s)\partial K_{0,m}^{(\beta)}f(s)\p_sf(s)\md s.
 $$
 Then, in the same way as for $L_2$, we have
  $$
 \lim\limits_{h\to 0^+} L_3(h) =  -K_{0,m}^{(\beta)} f(t)
 $$
 in $X_{0,p}$. Hence $f(t) \in D(G_{0,p}^{(\beta)})$ for $t>0$ and
 \begin{align}
 G_{0,p}^{(\beta)} f(t) &=  - K_{0,m}^{(\beta)} f(t) \nn\\
 &\phantom{x}+S_{G_{0,p}^{(\beta)}}(t) G_{0,p}^{(\beta)}\mr f + S_{G_{0,p}^{(\beta)}} (t ) K_{0,m}^{(\beta)} \mr f +\!\cl{0}{t}\! S_{G_{0,p}^{(\beta)}}(t\!-\!s)\partial K_{0,m}^{(\beta)}f(s)\p_sf(s)\md s\nn\\
 &= - K_{0,m}^{(\beta)} f(t) + \p_t f(t).\label{classsol}
 \end{align}
\end{proof}

\subsection{Global solvability}

To establish the existence of global (in time) solutions to the growth C-F equation we must impose the more restrictive condition
\begin{equation}
k(x,y) \leq k_0 (1+ x^\alpha+y^\alpha)
\label{kass1}
\end{equation}
on the coagulation kernel. As in \eqref{kass}, $k_0$ is a positive constant and $0 < \alpha<\gamma_0$, where $\gamma_0$ is given in \eqref{assa1}.  Also, the inclusion of the term $a_1(x) = \beta(1+ x^\alpha)$ being required only to prove the nonnegativity of mild solutions in Theorem \ref{lm3.1}, we now set $\beta = 0$, in which case, from \eqref{genrep} and Theorem \ref{th3.4}, there exists a unique solution $f$ to
\begin{equation}\label{feq0}
\frac{d}{dt}f(t) = T_{0,p}^0f(t) + A_{0,p}f(t) + B_{0,p}f(t) + K_{0,m}f(t),\ \quad  f(0) = \mr f \in X_{0,m}\cap D(G_{0,p}),
\end{equation}
in $C([0, \tau_{\max}), X_{0,m}) \cap C^{1}((0, \tau_{\max}), X_{0,m})\cap C((0, \tau_{\max}), D(G_{0,p}))$,
where $K_{0,m} := K_{0,m}^{(0)}$ and $G_{0,p} := G_{0,p}^{(0)}$. We emphasize that, once $\alpha$ is given, we can use an arbitrary $p>\max\{1,l\}$ and then  take $m = p+\alpha$.
\begin{theorem}\label{th3.3}
Let all the assumptions of Theorem \ref{lm3.1} hold, but with \eqref{kass} replaced by \eqref{kass1}. If either \begin{description}
\item {(i)} there are constants $m_0$ and $m_1$ such that $(n_0(x)-1)a(x) \leq m_0+m_1x$, for all $x \geq 0$,
where $n_0$ is defined by \eqref{nmy}, or
\item {(ii)} $r(x)\leq \ti r x$, for all $x > 0$ (i.e. $r_0 = 0$ in  \eqref{fmlras}),
\end{description}
then the solutions of Theorem \ref{lm3.1} are global in time.
\end{theorem}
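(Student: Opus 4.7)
The plan is to show that the local mild solutions constructed in Theorem \ref{lm3.1} cannot blow up in finite time. By the last statement of that theorem, it suffices to prove an a priori bound on $\|f(t)\|_{[0,m]}=M_{0,m}(t)$ on every finite subinterval of $[0,\tau_{\max})$. Since on $(0,\tau_{\max})$ the function $f$ is a classical solution in the sense of Theorem \ref{th3.4}, the evolution equation may be tested formally against the weight $w_m$ (rigorously through a truncation $w_m\wedge N$ followed by monotone convergence); so the strategy is based entirely on a differential inequality for $M_{0,m}(t)$ together with preparatory bounds on the low moments $M_0(t)$ and $M_1(t)$.

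First, I would establish that $M_1(t)$ (and, under (i), also $M_0(t)$) grows at most exponentially on any interval $[0,T]$. Integrating the equation against $1$ and against $x$, the coagulation contributes $\leq 0$ to $M_1$ (mass conservation), the fragmentation contributes $\int a f(n_0-1)\,\md x$ to $M_0$ and $0$ to $M_1$ by \eqref{baleq1}, while the growth part contributes $\int rf\,\md x$. Under hypothesis (i), $\int af(n_0-1)\,\md x\leq m_0 M_0+m_1 M_1$ and $\int rf\,\md x\leq \tilde r(M_0+M_1)$, so $\frac{d}{dt}M_{0,1}\leq C M_{0,1}$. Under hypothesis (ii), $r_0=0$ yields $\int rf\,\md x\leq r_1 M_1$, hence directly $\frac{d}{dt}M_1\leq r_1 M_1$. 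In both cases $M_1$ is bounded on $[0,T]$.

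Next, I would compute $\frac{d}{dt}M_{0,m}$ term by term. The growth and fragmentation pieces give, by the same manipulations used in Theorem \ref{regthm},
\begin{equation*}
\int_0^\infty \Phi_m(x) f(x,t)\,\md x \leq C_1 M_{0,m}(t) - c_1 \int_{y_m}^\infty a(x)f(x,t)x^m\,\md x \leq C_1' M_{0,m}(t) - c_1' M_{m+\gamma_0}(t) + C_1'',
\end{equation*}
where the last step uses \eqref{assa1}, \eqref{bmom} and \eqref{PhPr005}. For the coagulation piece, after the standard symmetrisation one obtains
\begin{equation*}
\frac12\int_0^\infty\!\!\int_0^\infty k(x,y)f(x)f(y)\bigl[w_m(x+y)-w_m(x)-w_m(y)\bigr]\md x\md y,
\end{equation*}
and, using the elementary inequality $(x+y)^m-x^m-y^m\leq C_m(x^{m-1}y+xy^{m-1})$ together with the crucial summed bound \eqref{kass1}, this is dominated by a linear combination of
\[
M_1 M_{m-1},\qquad M_{1+\alpha}M_{m-1},\qquad M_1 M_{m-1+\alpha}.
\]
Here lies the main obstacle: these products are bilinear in $f$ and must be absorbed by the fragmentation dissipation $-c_1'M_{m+\gamma_0}$. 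The first product is simply $\leq M_1\cdot M_{0,m}$ since $m-1<m$. For the other two, the hypothesis $\alpha<\gamma_0$ is essential: by Hölder interpolation between $M_1$ and $M_{m+\gamma_0}$,
\[
M_{1+\alpha}\leq M_1^{\theta_1}M_{m+\gamma_0}^{1-\theta_1},\quad M_{m-1}\leq M_1^{\theta_2}M_{m+\gamma_0}^{1-\theta_2},\quad M_{m-1+\alpha}\leq M_1^{\theta_3}M_{m+\gamma_0}^{1-\theta_3},
\]
with exponents one checks (using $\alpha<\gamma_0$) to satisfy $(1-\theta_1)+(1-\theta_2)<1$ and $1-\theta_3<1$. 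Young's inequality then yields, for every $\varepsilon>0$,
\[
M_{1+\alpha}M_{m-1}+M_1 M_{m-1+\alpha} \leq \varepsilon M_{m+\gamma_0} + C_\varepsilon\,\bigl(1+M_1(t)\bigr)^{\kappa}
\]
for some $\kappa>0$ depending on $m,\alpha,\gamma_0$.

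Combining these estimates and choosing $\varepsilon$ so that $c_1'-\varepsilon>0$, the $M_{m+\gamma_0}$ terms cancel and one arrives at
\[
\frac{d}{dt}M_{0,m}(t) \leq \bigl(C_1'+CM_1(t)\bigr)M_{0,m}(t) + C(t),
\]
where $C(t)$ is bounded on $[0,T]$ by the first step. Gronwall's inequality then gives a bound on $M_{0,m}$ over $[0,T]$, contradicting blow-up; hence $\tau_{\max}=\infty$. The truly delicate point of the argument is the interpolation-and-Young step, which relies critically both on the sum form \eqref{kass1} of $k$ (the product form would generate the term $M_{1+\alpha}M_{m-1+\alpha}$, whose exponents do not close) and on the strict inequality $\alpha<\gamma_0$.
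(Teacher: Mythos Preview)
Your overall strategy---derive a differential inequality for the moment, use the fragmentation dissipation $-c_1'M_{m+\gamma_0}$ to absorb the super-critical coagulation terms via interpolation and Young's inequality, then close with Gronwall---is exactly the mechanism in the paper. Your direct attack on $M_{0,m}$ (rather than inducting over integer moments) even avoids the paper's separate endgame argument for $M_0$ in case (ii), since the $(n_0-1)a$ contribution is swallowed by the $-N_m a$ piece for large $x$; this is a genuine simplification.

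However, there is a real gap. The inequality you want to exploit contains $M_{m+\gamma_0}(t)$, and when $\alpha>1$ the coagulation bound also produces $M_{m-1+\alpha}(t)$: these are moments of order \emph{strictly above} $m$, and for a solution constructed only in $X_{0,m}$ you have no a priori reason for them to be finite. Your suggested fix---testing against $w_m\wedge N$ and passing to the limit---does not help, because cutting off the weight destroys precisely the dissipative term $-c_1'M_{m+\gamma_0}$ that you need to absorb the Young remainders. Without that term the coagulation products cannot be closed. The paper handles this by first taking $\mr f\in C^\infty_0(\mathbb R_+)$, so that by Theorem~\ref{th3.4} the solution lies in $X_{0,i}$ for \emph{every} $i>\max\{1,l\}$ and all moments in the calculation are finite; the moment bounds are then shown to depend continuously on $\mr f$, and a density/continuous-dependence argument (via the Gronwall--Henry inequality) transfers the global bound to arbitrary $\mr f\in X_{0,m,+}$. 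You need an analogous two-step structure: either restrict first to data for which all relevant moments are finite and then approximate, or insert a bootstrap showing that the regularisation of Theorem~\ref{regthm} propagates through the Duhamel formula to give $f(t)\in X_{0,m+\gamma_0}$ for $t>0$. As written, the proposal skips this and the differential inequality is formal.
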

\begin{proof} The proof follows similar lines to that of \cite[Theorem 5.1]{Ban2020},  but some of the technicalities are slightly different. The hypothesis of the theorem guarantee the existence of a mild solution to
\begin{equation}
\frac{d}{dt}f(t) = T^0_{0,m} f(t)+ A_{0,m} f(t) +B_{0,m} f(t) + K_{0,m} f(t),\ \quad t\in (0,\tau_{\max}).
\label{feq}
\end{equation}
Using the classical identities and estimates, \cite[Eqn. (8.1.22) \& Lemma 7.4.2]{BLL}, and \eqref{kass1},
\begin{align}
\cl{0}{\infty}x^i\mc Kf(x)\md x &=
\frac{1}{2}\cl{0}{\infty}\cl{0}{\infty}((x+y)^i-x^i-y^i)k(x,y)f(x)f(y)\md x\md y, \nn\\
&\leq \frac{C_ik_0}{2}\cl{0}{\infty}\cl{0}{\infty}(yx^{i-1}+xy^{i-1})(1+ x^\alpha+y^\alpha)f(x)f(y)\md x\md y,\nn\\
&\leq K_i (\|f\|_{[1]}\|f\|_{[i-1]} + \|f\|_{[1]} \|f\|_{[\alpha+i-1]} + \|f\|_{[\alpha+1]}\|f\|_{[i-1]}),
\label{coag1}
\end{align}
for $i\geq 1$, where $K_i=C_ik_0$ and the norms are defined by \eqref{norms}.   First we consider  $\mr f$ to be a $C^\infty(\mbb R_+)$ function with bounded support. Then $\mr f \in D(G_{0,i})$ for any $i$  and, if additionally $i>\max\{1,l\}$, then, by Theorem \ref{th3.4}, the corresponding solution $(0,\tau_{\max})\ni t\mapsto f(t)= f(t,\mr f)$ is differentiable in any space $X_{i}$. Hence, using \eqref{subfuncta'} (with $a_1(x) \equiv 0$), and recalling that $M_m(t)$ is given by \eqref{Moments},  we obtain
\begin{align}
\frac{d}{dt} M_0(t) &=    -\cl{0}{\infty}N_0(x)a(x)f(x,t)\md x-\frac{1}{2}\cl{0}{\infty}\cl{0}{\infty}k(x,y)f(x,t)f(y,t)\md x\md y   \label{M0}\\
\frac{d}{dt} M_1(t) &=  \cl{0}{\infty}r(x) f(x,t) \md x\label{M1}\\
\frac{d}{dt} M_i(t) &=  \cl{0}{\infty}\left(ir(x)x^{i-1}  -N_i(x)a(x)\right)f(x,t)  \mdm{d}x \nn\\&\phantom{xx}+ \frac{1}{2}\cl{0}{\infty}\int_{0}^{\infty}((x+y)^i-x^i-y^i)k(x,y)f(x,t)f(y,t)\md x\md y, \quad i>1.\label{feco1}
\end{align}
As pointed out earlier, $N_0(y) = 1-n_0(y) <0$ due to \eqref{Nm}.

Let us consider first the term in \eqref{feco1} containing $N_i$ and recall that $a_0,\gamma_0$ and $x_0$ are the constants given in \eqref{assa1}. Similarly to \eqref{bmom1} (see also  \cite[Theorem 2.2]{Ban2020}),  we have that if $N_{m_0}(x)/x^{m_0} \geq \delta'_{m_0}$ holds for some $m_0 > 1,$ $\delta'_{m_0}$ and $x\geq x_0$, then  for any $i >1$ there is $\delta'_i>0$ such that $N_i(x)/x^i \geq \delta'_i>0$ for any $x\geq x_0$. Hence,
  \begin{align}
  -\cl{0}{\infty}N_i(x)a(x)f(x)\md x &=  -\cl{0}{x_0}a(x) N_i(x)f(x)\md x - \cl{x_0}{\infty} a(x)f(x) x^i \frac{N_i(x)}{x^i}\md x\nn\\
&\leq   - \delta'_i\cl{0}{\infty} a(x)f(x) x^i \md x  + \delta'_i\cl{0}{x_0}a(x) x^if(x)\md x\nn\\
&\leq - \delta_i \|f\|_{[i+\gamma_0]}  + \nu_i \| f\|_{[i]},\label{fragest}
\end{align}
 where $\delta_i= \delta'_i a_0$ and  $\nu_i = \delta_i\text{ess}\sup_{0\leq x\leq x_0} a(x)$.
   First, let us consider an integer $i \geq 2$. Then, from \eqref{fragest}, together with \eqref{M0} and \eqref{M1}, \begin{align}
 \frac{d}{dt} M_0(t) &\leq    \cl{0}{\infty}(n_0(x)-1)a(x)f(x,t)\md x   \nn\\
\frac{d}{dt} M_1(t) &\leq   \ti r M_0(t) + \ti r M_1(t)\nn\\
 \frac{d}{dt} M_{i}(t) &\leq \ti rM_{i-1}(t) + (\nu_i+\ti r) M_{i}(t) - \delta_i M_{i+\gamma_0}(t) \nn \\
 & \quad + K_i(M_{1}(t)M_{i-1}(t) + M_{1}(t) M_{\alpha+i-1}(t) + M_{\alpha+1}(t)M_{i-1}(t)).\label{firstmom}
\end{align}
To simplify \eqref{firstmom}, we use  the following auxiliary inequalities.  For  $i \geq 2$ and  $1\leq r \leq i-1,$ we apply H\"{o}lder's inequality with $p=\gamma_0/\alpha$ and $q =\gamma_0/(\gamma_0-\alpha)$ to  obtain
\begin{align}
\|f\|_{[r+\alpha]} &= \int_0^\infty x^r x^\alpha f(x) \md x = \int_0^1 x^r x^\alpha f(x) \md x + \int_1^\infty x^r x^\alpha f(x) \md x\nn\\
&\leq c_\alpha \int_0^1 x f(x) \md x  + \int_1^\infty x^{(i-1)/q}f^{1/q}(x)x^{r-\frac{i-1}{q}} x^{\frac{\gamma_0}{p}} f^{1/p}(x) \md x\nn\\
&\leq c_\alpha\| f\|_{[1]}  + \left(\int_0^\infty x^{i-1}f(x)\md x\right)^{\frac{1}{q}}\left(\int_1^\infty x^{pr-\frac{p(i-1)}{q}} x^{\gamma_0} f(x) \md x\right)^{\frac{1}{p}}\nn\\
&\leq c_\alpha\| f\|_{[1]}  + \|f\|_{[i-1]}^{\frac{\gamma_0-\alpha}{\gamma_0}}\|f\|_{[i+\gamma_0]}^{\frac{\gamma_0}{\alpha}}, \label{wl866}
\end{align}
where we used the fact that for $1\leq r\leq i-1$
$$
pr-\frac{p(i-1)}{q} = \frac{\gamma_0}{\alpha}r - \left(\frac{\gamma_0}{\alpha}-1\right)(i-1) \leq i-1 < i$$  and hence
$$
x^{pr-\frac{p(i-1)}{q}} \leq x^i, \qquad x \in [1,\infty).
$$
Then  Young's inequality gives, for any $\e > 0$,
\begin{align}
\|f\|_{[i+\alpha -1]}\|f\|_{[1]} &\leq c_\alpha\|f\|^2_{[1]} + \|f\|_{[1]}\|f\|_{[i-1]}^{\frac{\gamma_0-\alpha}{\gamma_0}}\|f\|_{[i+\gamma_0]}^{\frac{\gamma_0}{\alpha}}\nn\\
&\leq c_\alpha\|f\|^2_{[1]} + \|f\|_{[1]}\left(\frac{\gamma_0-\alpha}{\gamma_0}\e^{\frac{\gamma_0}{\alpha-\gamma_0}}\|f\|_{[i-1]} + \frac{\alpha}{\gamma_0}\e^{\frac{\gamma_0}{\alpha}}\|f\|_{[i+\gamma_0]}\right)
\end{align}
and
\begin{align}
\|f\|_{[i-1]}\|f\|_{[1+\alpha]} &\leq c_\alpha\|f\|_{[1]} \|f\|_{[i-1]}+ \|f\|_{[i-1]}^{\frac{2\gamma_0-\alpha}{\gamma_0}}\|f\|_{[i+\gamma_0]}^{\frac{\gamma_0}{\alpha}}\nn\\
&\leq c_\alpha\|f\|_{[1]} \|f\|_{[i-1]}\! +\! \left(\!\frac{\gamma_0-\alpha}{\gamma_0}\e^{\frac{\gamma_0}{\alpha-\gamma_0}}\|f\|_{[i-1]}^{\frac{2\gamma_0-\alpha}{\gamma_0-\alpha}} + \frac{\alpha}{\gamma_0}\e^{\frac{\gamma_0}{\alpha}}\|f\|_{[i+\gamma_0]}\!\!\right).
\end{align}
We now apply these inequalities to the solution $t\mapsto f(t)$, transforming the last inequality in \eqref{firstmom} into
\begin{align}
&\frac{d}{dt} M_{i}(t) \leq \ti rM_{i-1}(t) + (\nu_i+\ti r) M_{i}(t) - \delta_i M_{i+\gamma_0}(t) \nn \\
 & \quad + K_i\left(\phantom{\frac{a}{b}}\!\!\!\!\!M_{1}(t)M_{i-1}(t) + c_\alpha M^2_{1}(t) \right.\nn\\
  &\quad \left.+ M_{1}(t)\left(\frac{\gamma_0-\alpha}{\gamma_0}\e^{\frac{\gamma_0}{\alpha-\gamma_0}}M_{i-1}(t) + \frac{\alpha}{\gamma_0}\e^{\frac{\gamma_0}{\alpha}}M_{i+\gamma_0}(t)\right)\right. \nn\\
  &\quad + \left. c_\alpha M_1(t)M_{i-1}(t) + \left(\frac{\gamma_0-\alpha}{\gamma_0}\e^{\frac{\gamma_0}{\alpha-\gamma_0}}M_{i-1}^{\frac{2\gamma_0-\alpha}{\gamma_0-\alpha}}(t) + \frac{\alpha}{\gamma_0}\e^{\frac{\gamma_0}{\alpha}}M_{i+\gamma_0}(t)\right)\right).\label{mom2}
\end{align}
There remains the problem that the estimates derived above require some control of $M_1(t)$. This presents no difficulties for the standard, mass-conserving C-F models, as then $M_{1}(t) = \|\mr f\|_{[1]}$  for all $t \in [0, \tau_{\max})$. Here, however, the second inequality of \eqref{firstmom} shows that $M_1(t)$ is coupled with $M_0(t)$, and the latter in general depends on higher order moments. There are two easy ways to remedy this situation, related to assumptions (i) and (ii), respectively. If (i) is satisfied,  then
\begin{align*}
\frac{d}{dt} M_0(t) &\leq    \cl{0}{\infty}(n_0(x)-1)a(x)f(x,t)\md x \leq m_0 M_0(t) + m_1 M_1(t),\nn\\
\frac{d}{dt} M_1(t) &\leq    r_0 M_0(t) +  r_1M_1(t),
\end{align*}
which yields $M_0(t) \leq \mr M_0 e^{\mu t}$ and $M_1(t) \leq \mr M_1 e^{\mu t}$ for some constant $\mu$ and thus neither moment blows up in finite time. If (ii) is satisfied, then obviously $M_1(t) \leq \mr M_1 e^{\ti r t}$ and the inequalities for the moments of order greater than one become decoupled from the zeroth order moment.   In both cases $M_1(t)\leq M_{1,\tau_{\max}}$ on $[0,\tau_{max})$
 and, by choosing $\e$ so that $\frac{\alpha}{\gamma_0}\e^{\frac{\gamma_0}{\alpha}} K_i(M_{1,\tau_{\max}} +1)\leq \delta_i$, we see that there are positive constants $D_{0,i}, D_{1,i}, D_{2,i}, D_{3,i}$ such that (\ref{mom2}) can be written as
\begin{equation}
 \frac{d}{dt} M_{i}(t) \leq D_{0,i}+ D_{1,i} M_{i}(t) + D_{2,i} M_{i-1}(t) + D_{3,i} M_{i-1}^{\frac{2\gamma_0-\alpha}{\gamma_0-\alpha}}(t),   \label{firstmom1}
\end{equation}
for $t\in [0,\tau_{\max})$. In particular, for $i = 2$ we obtain
\begin{equation}
 \frac{d}{dt} M_{2}(t) \leq D_{0,2}+ D_{1,2} M_{2}(t) + D_{2,2} M_{1}(t) + D_{3,2} M_{1}^{\frac{2\gamma_0-\alpha}{\gamma_0-\alpha}}(t), \label{firstmom2}
\end{equation}
for $ t\in [0,\tau_{\max})$, and thus $t \mapsto M_{2}(t)$ is bounded on $[0, \tau_{\max})$. Then we can use \eqref{firstmom1} to proceed inductively to establish the  boundedness of $t \mapsto M_{i}(t)$ for any integer $i$ (for the chosen initial condition). Further, since for any $i>1$ we have $x^i \leq x$ for $x \in [0,1]$ and $x^i \leq x^{\lfloor i\rfloor +1}$ for $x \geq 1$, then
 $$
 \|f\|_{[i]} \leq \|f\|_{[1]} + \|f\|_{[\lfloor i\rfloor +1]},
 $$
 and we find that all moments of the solution of order $i\geq 1$ are bounded on the maximal interval of its existence.

 It remains to prove that $t\mapsto M_0(t)$ is bounded on $[0,\tau_{\max})$ (in case (ii)). Let us fix an integer $i>\max\{1,l\}.$ Using the fact that
$$
\cl{0}{\infty} \mc K f(x,t)\md x \leq 0$$
and, from \eqref{PhPr005},
\begin{align}
\cl{0}{\infty} \mc F f(x,t)\md x &\leq \cl{0}{\infty} (n_0(y)-1)a(y) f(y,t)\md y \leq 2b_0\cl{0}{\infty} a(y) f(y,t) w_i(y)\md y \nn\\
&\leq \ti a\cl{0}{x_0}  f(y,t) \md y + 2b_0 R(t),
\label{eqP1}
\end{align}
on $[0,\tau_{\max}),$ where $\ti a = 2b_0 \text{ess}\sup_{y\in [0,x_0]}a(y)w_i(y) $, for the zeroth moment we have
$$
\frac{d}{dt}M_0(t) \leq \ti a M_0(t) + 2b_0 R(t),
$$
where we denoted
$$
R(t)= \cl{x_0}{\infty} a(x) f(x,t)w_i(x)\md x.
$$
Hence
\begin{equation}
M_{0}(t) \leq e^{\ti a t}\left(\|\mr f\|_{[0]} + 2b_0\cl{0}{t} R(s)\md s\right).
\label{Pt0}
\end{equation}
We have the estimate
\begin{align}
 \cl{0}{t}R(s) \md s&=\cl{0}{t}\cl{x_0}{\infty} a(x) f(x,s)w_i(x)\md x\md s \leq (1+x_0^{-i})\cl{0}{t}\cl{x_0}{\infty} a(x) f(x,s)x^i\md x\md s.
\label{Pt1}
\end{align}

Now, as in  \eqref{fragest},
 \begin{align}
 \cl{0}{\infty} \mc F f(x)x^i\md x & = -\cl{0}{\infty}N_i(x)a(x)f(x)\md x \leq  - \cl{x_0}{\infty} a(x)f(x) x^i \frac{N_i(x)}{x^i}\md x\nn\\
&\leq   - \frac{\delta'_i}{2}\cl{x_0}{\infty} a(x)f(x) x^i \md x  - \frac{\delta'_i}{2}\cl{0}{\infty} a(x)f(x) x^i \md x  + \frac{\delta'_i}{2}\cl{0}{x_0}a(x) x^if(x)\md x\nn\\
&\leq  - \frac{\delta'_i}{2}\cl{x_0}{\infty} a(x)f(x) x^i \md x - \frac{\delta_i}{2} \|f\|_{[i+\gamma_0]}  + \nu_i \| f\|_{[i]},
 \label{Fest1}
 \end{align}
 where $\delta_i$ and  $\nu_i$ were defined previously.
Now, knowing that all lower order moments are finite on $[0,\tau_{\max})$ and selecting  $\e$ so that $\frac{\alpha}{\gamma_0}\e^{\frac{\gamma_0}{\alpha}} K_i(M_{1,\tau_{\max}} +1)\leq \frac{\delta_i}{2},$  we can write \eqref{firstmom1} as
\begin{equation}
 \frac{d}{dt} M_{i}(t) \leq - \frac{\delta'_i}{2}\cl{x_0}{\infty} a(x)f(x,t) x^i \md x  + D_{0,i}+ D_{1,i} M_{i}(t) + \Theta(t),    \label{firstmom1'}
\end{equation}
 where $\Theta$ is bounded on $t\in [0,\tau_{\max})$. This can be re-written as
 \begin{align*}
 &\frac{d}{dt} \left(M_{i}(t)+ \frac{\delta'_i}{2}\cl{0}{t}\cl{x_0}{\infty} a(x)f(x,s) x^i \md x \md s\right) \leq    D_{0,i}+ D_{1,i} M_{i}(t) + \Theta(t)    \\
 &\phantom{xxx}\leq  D_{0,i}+ D_{1,i} \left(M_{i}(t)+ \frac{\delta'_i}{2}\cl{0}{t}\cl{x_0}{\infty} a(x)f(x,s) x^i \md x \md s\right) + \Theta(t).
\end{align*}
 Denoting
 $$
 \Phi(t) = M_{i}(t)+ \frac{\delta'_i}{2}\cl{0}{t}\cl{x_0}{\infty} a(x)f(x,s) x^i \md x \md s
 $$
 and  integrating,
\begin{align*}
\Phi(t) &\leq e^{D_{1,i} t}\left(\Phi(0) + \frac{D_{0,i}}{D_{1,i}}(1-e^{-D_{1,i}t}) + \cl{0}{t}\Theta(s)e^{-D_{1,i} s}\md s\right)
\end{align*}
and we see that neither $\Phi,$ nor
$$
t\mapsto \cl{0}{t}\cl{x_0}{\infty} a(x) f(x,s)x^i\md x\md s
$$
can blow up at $t=\tau_{\max}$. Hence, by \eqref{Pt1} and \eqref{Pt0}, neither can $t\mapsto M_0(t)$.

This shows that solutions emanating from compactly supported differentiable initial conditions are global in time. Consider now $\mr f\in X_{0,m,+}$  and a sequence of such regular initial conditions $(\mr f_k)_{k\geq 1}$ approximating $\mr f$ and assume that  $t\to f(t, \mr f)$ has a finite time blow up at $\tau_{\max}$. By the moment estimates above, the bounds of $\|f(t, \mr f_k)\|_{[0,m]}$ over any finite time interval depend continuously on $\mr f_k$ and thus are uniform in $k$ on $[0,\tau_{\max}]$. On the other hand,
 there is a sequence $(t_n)_{n\geq 1}$ such that $t_n\to \tau_{\max}, n\to \infty,$ and $\|f(t_n, \mr f)\|_{[0,m]}$ is unbounded; that is, the distance between $f(t_n,\mr f)$ and all $f(t_n,\mr f_k)$ becomes arbitrarily large. This contradicts the continuous dependence of solutions on the initial conditions following, on each $[0,t_n]$, from the Gronwall-- Henry inequality, \cite[Lemma 3.2]{Banasiak2019}),   see also \cite[Theorem 8.1.1]{BLL}.
\end{proof}
\begin{remark} The additional restrictions in Theorem \ref{th3.3} are due to the fact that, in the general case, we cannot control the production of particles; that is, the zeroth moment. In principle, there is a positive feedback loop in which $M_0$ contributes to $M_1$ which, in turn, amplifies, in a nonlinear way,  higher order moments that determine the rate of growth of $M_0$. The adopted assumptions, which postulate that either $M_0$ is controlled by $M_1,$ or that the evolution of mass is not influenced by other mechanisms ($r_0\neq 0$ implies that there is a production of mass independent of the existing one), although technical, seem to be the simplest ones that break this cycle.  We do not claim that these assumptions are optimal but at present we do not have any examples of a finite time blow up of solutions in this setting. It is, however, worthwhile to note that there are known cases of a finite time blow up of solutions to growth--fragmentation--coagulation equations even with bounded coagulation kernels but with  the renewal boundary condition,\cite{Bana12c}.
\end{remark}

\end{document}